\newcommand{\Z}{{\mathbb Z}}
\newcommand{\R}{{\mathbb R}}
\newcommand{\C}{{\mathbb C}}
\newcommand{\D}{{\mathbb D}}
\newcommand{\T}{{\mathbb T}}
\newcommand{\N}{{\mathbb N}}
\newtheorem{lemma}{Lemma}[section]
\newtheorem{theorem}[lemma]{Theorem}
\newtheorem{remark}[lemma]{Remark}
\newtheorem{proposition}[lemma]{Proposition}
\newtheorem{corollary}[lemma]{Corollary}
\newtheorem{definition}[lemma]{Definition}
\newcommand{\nn}{\nonumber}
\newcommand{\be}{\begin{equation}}
\newcommand{\ee}{\end{equation}}
\newcommand{\ul}{\underline}
\newcommand{\ol}{\overline}
\newcommand{\ti}{\tilde}
\newcommand{\spr}[2]{\left\langle #1 , #2 \right\rangle}
\newcommand{\E}{\mathrm{e}}
\newcommand{\I}{\mathrm{i}}
\newcommand{\tr}{\mathrm{tr}}
\DeclareMathOperator{\dist}{dist}
\newcommand{\eps}{\varepsilon}
\numberwithin{equation}{section}
\begin{document}

\title[Verblunsky coefficients defined by the skew-shift]{Orthogonal polynomials on the unit circle with Verblunsky coefficients defined by the skew-shift}

\author[H.\ Kr\"uger]{Helge Kr\"uger}
\address{Mathematics 253-37, Caltech, Pasadena, CA 91125}
\email{\href{helge@caltech.edu}{helge@caltech.edu}}
\urladdr{\href{http://www.its.caltech.edu/~helge/}{http://www.its.caltech.edu/~helge/}}

\thanks{H.\ K.\ was supported by a fellowship of the Simons foundation.}

\date{\today}

\keywords{OPUC, CMV matrices, spectrum, skew-shift, eigenvalue statistics}

\begin{abstract}
 I give an example of a family of 
 orthogonal polynomials on the unit circle
 with Verblunsky coefficients
 given by the skew-shift for which the associated
 measures are supported on the entire unit circle
 and almost-every Aleksandrov measure is pure point.

 Furthermore, I show in the case of the two dimensional
 skew-shift the zeros of para-orthogonal polynomials
 obey the same statistics as an appropriate irrational
 rotation.

 The proof is based on an analysis of the associated
 CMV matrices.
\end{abstract}

\maketitle

%
%
%

\section{Introduction}

In this article, I consider orthogonal polynomials on
the unit circle, whose Verblunsky coefficients are given
by
\be\label{eq:defVeromegank}
 \alpha_n = \lambda \E^{2\pi\I\cdot \omega n^{k}}
\ee
for $0\neq \lambda\in\D=\{z:\quad |z|<1\}$,
$\omega$ an irrational number, and $k\geq 2$.
The case $k=1$ corresponds to rotated versions
of the Geronimus polynomials, see Theorem~1.6.13
in \cite{opuc2} and Proposition~\ref{prop:rotatealpha}
(see also Theorem~5.3 in \cite{gzborg}).
Given Verblunsky coefficients $\alpha_n$,
we define orthogonal polynomials
on the unit circle recursively by
\be\label{eq:defPhin}
 \Phi_0(z) = 1, \quad \Phi_{n+1}(z) = z \Phi_n(z)
 -\ol{\alpha_n} \Phi_n^{\ast}(z),
\ee
where $\Phi_n^{\ast}(z) = z^n \ol{\Phi(\ol{z}^{-1})}$
is the reversed polynomial. By Verblunsky's theorem,
there exists an unique probability measure $\mu$ on $\partial\D$ such that
the $\Phi_n$ are orthogonal with respect to it.
The first result is

\begin{theorem}\label{thm:main1}
 The support of $\mu$ satisfies
 \be
  \mathrm{supp}(\mu) = \partial\D.
 \ee
\end{theorem}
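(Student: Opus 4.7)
The plan is to show that the support of $\mu$ is rotation-invariant on $\partial\D$; since it is also closed and nonempty, it must then equal all of $\partial\D$. The mechanism for rotation invariance is the fact that translating the initial condition of the skew-shift in a suitable direction corresponds to a global phase rotation of all Verblunsky coefficients, which in turn (by Proposition~\ref{prop:rotatealpha}) rotates the measure.

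First I would set up the dynamical picture. Realize $\alpha_n = f(T^n x_0)$, where $T:\T^k\to\T^k$ is the appropriate skew-shift (e.g., $T(x_1,\dots,x_k)=(x_1+\omega,x_2+x_1,\dots,x_k+x_{k-1})$), $f(y)=\lambda \E^{2\pi\I y_k}$ depends only on the last coordinate, and $x_0$ is chosen so that $y_k(T^n x_0)\equiv \omega n^k\pmod 1$. The key structural observation is that translation by $(0,\dots,0,\delta)$ in the last coordinate commutes with $T$, so starting from $x_0+(0,\dots,0,\delta)$ simply multiplies every $\alpha_n$ by the global phase $\E^{2\pi\I\delta}$.

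Next I would exploit minimality. For irrational $\omega$ the skew-shift on $\T^k$ is minimal, so the forward orbit of $x_0$ is dense; in particular, for every $\delta$ one can choose $m_j\to\infty$ with $T^{m_j}x_0\to x_0+(0,\dots,0,\delta)$. The shifted Verblunsky sequences $\{\alpha_{n+m_j}\}_n$ then converge pointwise to $\{\E^{2\pi\I\delta}\alpha_n\}_n$, which (since each CMV matrix entry depends continuously on finitely many coefficients) gives strong convergence of the corresponding half-line CMV matrices. Using that the essential spectrum of a half-line CMV matrix is invariant under finite shifts of its Verblunsky sequence, and is lower semi-continuous under strong convergence, I obtain
\[
 \sigma_{\mathrm{ess}}(\mathcal C) \subseteq \sigma_{\mathrm{ess}}(\mathcal C_\delta),
\]
where $\mathcal C_\delta$ is the CMV matrix with coefficients $\E^{2\pi\I\delta}\alpha_n$. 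Running the same argument in reverse (approximating $x_0$ from the orbit of $x_0+(0,\dots,0,\delta)$) yields equality.

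Finally, Proposition~\ref{prop:rotatealpha} identifies $\mathcal C_\delta$ as (unitarily equivalent to) the CMV matrix of a measure that is the rotate of $\mu$ by the angle $2\pi\delta$, so its spectrum is the rotate of $\sigma(\mathcal C)$. Consequently, up to the (inessential) discrepancy between support and essential support on $\partial\D$, $\mathrm{supp}(\mu)$ is invariant under rotation by every angle $2\pi\delta$, $\delta\in[0,1)$. Being closed, nonempty, and rotation-invariant, it must be $\partial\D$.

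The main obstacle is the rigorous interplay between one-sided shifts and strong limits: the orbit picture lives on $\T^k$, but the CMV matrix is indexed by $\N$, so I need to carefully justify that shifting $\alpha_n\mapsto\alpha_{n+m}$ preserves $\sigma_{\mathrm{ess}}$ and that strong convergence of the shifted half-line CMV matrices implies the desired containment for essential spectra. A cleaner alternative, which I would use if the direct half-line argument becomes awkward, is to pass to the extended (two-sided) CMV matrix, where shift-invariance of the spectrum is automatic and minimality transfers essential spectrum equality across the whole orbit closure.
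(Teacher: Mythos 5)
There is a genuine gap: you translate the \emph{last} coordinate of the skew-shift, but this produces the wrong family of Verblunsky coefficients for applying Proposition~\ref{prop:rotatealpha}. Translating $x_k \mapsto x_k + \delta$ indeed commutes with $T$ and multiplies each $\alpha_n$ by the \emph{constant} phase $\E^{2\pi\I\delta}$; however, Proposition~\ref{prop:rotatealpha} concerns the $n$-dependent modification $\alpha_n\mapsto \E^{2\pi\I\eta n}\alpha_n$. These are not the same: the constant-phase coefficients $\E^{2\pi\I\delta}\alpha_n$ define the Aleksandrov family $\mu_\delta$, whose essential support is already known to agree with that of $\mu$ (and which your strong-limit argument in effect re-derives), but which is not a rotate of $\mu$. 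So the step ``Proposition~\ref{prop:rotatealpha} identifies $\mathcal C_\delta$ as the CMV matrix of the rotate of $\mu$ by $2\pi\delta$'' is false, and rotational invariance of $\mathrm{supp}(\mu)$ is not established.

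To produce a rotation you must perturb the coordinate that enters $(T^n x)_k$ \emph{linearly} in $n$, namely $x_{k-1}$: for $k=2$ one has $(T^n(x_1,x_2))_2 = \binom{n}{2}\omega + n x_1 + x_2$, so it is $x_1 \mapsto x_1 + \eta$ (not $x_2$) that yields $\alpha_n\mapsto \E^{2\pi\I\eta n}\alpha_n$. But that translation does \emph{not} commute with $T$, which is precisely why a commuting translation cannot be used. The paper instead shifts the index by $\ell$ (Lemma~\ref{lem:translatealpha}, preserving $\sigma_{\mathrm{ess}}$), expands $(n+\ell)^k$ to expose all the monomial coefficients $\binom{k}{j}\omega\ell^{k-j}$, and uses density of these parameter vectors together with strong convergence (Lemma~\ref{lem:sigmaCy}) to reach arbitrary values of the $n$-linear coefficient $y_1$, at which point Proposition~\ref{prop:rotatealpha} applies to $\hat y_1 = y_1 + \eta$. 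Your orbit/strong-convergence machinery is the right tool, but it must be aimed at the $n$-linear coefficient rather than the constant one; once that is corrected, the argument essentially coincides with the paper's.
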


The key to the proof of this theorem is that the
support of $\mu$ is the same as the support
of the measure with Verblunsky coefficients
$\alpha_n \E^{2\pi\I y n}$ by ergodicity for
any $y\in\T=\R/\Z$. Now these two supports are
just rotated versions of each other. Hence
$\mathrm{supp}(\mu)$ must be the entire unit circle.
I give the details of the proof in Section~\ref{sec:pfmain1}.

Next, consider the family of Verblunsky coefficients
given by $\alpha_{x,n} = \alpha_n \cdot \E^{2\pi\I x}$.
The corresponding measures are known as {\em Aleksandrov
measures} $\mu_x$ see Section~3.2. in \cite{opuc1}.
Then we have that

\begin{theorem}\label{thm:main2}
 For almost every $x$, the Aleksandrov measure $\mu_x$
 is pure point.
\end{theorem}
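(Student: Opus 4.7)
The plan is to realize the Aleksandrov family $\{\mu_x\}$ as a one-parameter slice of an ergodic family of CMV matrices driven by a $k$-dimensional skew-shift, and then apply a spectral-averaging argument of Simon-Wolff type. First I would choose a $k$-dimensional skew-shift $T:\T^k\to\T^k$ together with base points $\theta_x\in\T^k$ (with $x$ appearing as the last coordinate) such that the last coordinate of $T^n\theta_x$ equals $\omega n^k+x$ mod $1$, up to lower-order polynomial corrections in $n$ that can be absorbed into the remaining coordinates of $\theta_x$. With sampling function $f(\theta)=\lambda\E^{2\pi\I\theta_k}$ one has $\alpha_{x,n}=f(T^n\theta_x)$, so the Aleksandrov family is precisely the one-parameter fiber, parametrized by $x$, of this ergodic family of CMV operators.

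Second, I would establish strict positivity of the Lyapunov exponent $\gamma(z)>0$ of the Szeg\H{o} cocycle for Lebesgue-almost every $z\in\partial\D$. The key input is the constant-modulus property $|\alpha_n|\equiv|\lambda|>0$: after conjugating the transfer cocycle into $SU(1,1)$ one obtains a product of matrices with a uniformly hyperbolic component of size $\sim|\lambda|$ per step, and the Thouless formula for OPUC together with subharmonic majoration on $\D$ should produce the pointwise lower bound $\gamma(z)\geq -\tfrac12\log(1-|\lambda|^2)>0$ for Lebesgue-a.e.\ $z\in\partial\D$. With $\gamma$ positive almost everywhere, the polynomial solutions $\Phi_n(z)$ grow exponentially for Lebesgue-a.e.\ $z$, so that $\sum_n|\Phi_n(z)|^2=+\infty$ almost everywhere. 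Feeding this divergence into Aleksandrov's spectral-averaging identity $\int_0^1\mu_x\,dx=$ normalized Lebesgue measure on $\partial\D$ through the CMV incarnation of the Simon-Wolff rank-one theorem then forces $\mu_x$ to be pure point for Lebesgue-a.e.\ $x$.

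The principal obstacle is the positive-Lyapunov-exponent bound. For a generic sampling function composed with the skew-shift this is a very delicate problem, typically requiring the Bourgain-Goldstein-Schlag machinery of semi-algebraic sets and large deviations. Here the constant-modulus condition should rescue the argument: the hyperbolic factor enters the Szeg\H{o} cocycle deterministically at every time step, so that a clean Thouless/subharmonicity computation yields the desired lower bound on $\gamma$ without any delicate arithmetic input on $\omega$. A secondary technical point is to verify the Simon-Wolff implication carefully in the CMV/Aleksandrov setting, but this is essentially the standard rank-one perturbation machinery in the OPUC framework and should transfer without difficulty once $\gamma>0$ is in hand.
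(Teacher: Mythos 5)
Your overall route matches the paper's: realize the Aleksandrov family as a slice of a skew-shift CMV family, establish positivity of the Lyapunov exponent, and invoke spectral averaging (the paper cites Theorem~12.6.1 in \cite{opuc2}, which is precisely the CMV/Aleksandrov incarnation of the Simon--Wolff rank-one argument you describe). The realization step and the spectral-averaging step are both fine.

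However, there is a genuine gap in your Lyapunov exponent argument. You attribute the lower bound $\gamma(z)\geq -\tfrac12\log(1-|\lambda|^2)>0$ to the constant-modulus property $|\alpha_n|\equiv|\lambda|$ together with a ``uniformly hyperbolic component of size $\sim|\lambda|$ per step.'' Constant modulus alone does \emph{not} give uniform hyperbolicity of the Szeg\H{o} cocycle, nor does it give $\gamma>0$ a.e.\ on $\partial\D$: the Geronimus example $\alpha_n\equiv\lambda$ has constant modulus, yet its essential spectrum is a proper arc of $\partial\D$ and $\gamma$ vanishes identically on that arc. The actual mechanism, and the ``algebraic miracle'' the paper leans on, is the \emph{rotational invariance in distribution} of the $\alpha_n$: because the $x_k$-coordinate of the skew-shift is uniformly distributed, the ergodic measure is invariant under $\alpha_n\mapsto e^{2\pi i\theta}\alpha_n$. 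Via Proposition~\ref{prop:rotatealphas} this forces the density of states to be normalized Lebesgue measure on $\partial\D$ (Theorem~\ref{thm:wegneresti}), and only then does the Thouless formula collapse to give $\gamma(z)\equiv -\tfrac12\log(1-|\lambda|^2)$ for all $z\in\partial\D$ (Theorem~\ref{thm:lyappos}, following Theorem~12.6.2 in \cite{opuc2}). So the constant modulus fixes the \emph{value} once you know $\gamma$ is constant on the circle; it is the rotational invariance, not the constant modulus, that guarantees $\gamma$ is constant (and hence positive) in the first place. Once this step is repaired, the rest of your argument goes through and coincides with the paper's.
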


The proof of this theorem is essentially the same as 
Theorem~\ref{thm:main1}, since the rotational invariance implies
positivity of the Lyapunov exponent. Pure point spectrum
then follows from spectral averaging.
Deterministic examples with similar properties
have been previously obtained in \cite{dk}.

Adapting the methods of \cite{krho1}, \cite{krho2}
to orthogonal polynomials on the unit circle, it should
be possible to obtain similar even for $k  > 1$
not an integer.

\bigskip

At this point, let me mention that the corresponding
question for orthogonal polynomials on the real line
respectively better Schr\"odinger operators is open.
Consider the potential $V(n) = \lambda \cos(2\pi \omega n^2)$
for an irrational number $\omega$. Then under a
Diophantine assumption on $\omega$ and a largeness
condition on $\lambda$ one can show pure point
spectrum, see \cite{b2002}, \cite{bgs},
and Chapter~15 in \cite{bbook} and that
the spectrum contains intervals \cite{kskew}.
However, it is believed that for all $\lambda > 0$
the spectrum of this operator is an interval
and pure point. Partial results for $\lambda > 0$
small can be found in \cite{b, b2,b3}.

The proofs of Theorem~\ref{thm:main1} and \ref{thm:main2}
are much easier than the real case, because of algebraic
miracles (Proposition~\ref{prop:rotatealphas}). However, there is also an analytic reason
why the case on the unit circle should be simpler,
namely that then the spectrum has no edges.

For this reason, I expect it to be possible to show
analogs of Theorem~\ref{thm:main1} and \ref{thm:main2}
if one perturbs $\alpha_n$ slightly by for example
$\alpha_n + \eps f(\omega n^k)$ for an analytic
and one-periodic function $f$ and $\eps > 0$
small enough.

\medskip

At first sight Theorem~\ref{thm:main1} and \ref{thm:main2}
might not seem too surprising, since we know many measures
whose support is the entire unit circle. But the Verblunsky
coefficients of these measures behave quite differently,
for regular measures one knows \cite{si} that the
Verblunsky coefficients Ces\'aro sum to $0$. Similarly
non-zero periodic potentials have at least one gap.

The situation becomes even more striking when considering
Schr\"odinger operators. There have been a series of 
innovative works \cite{abd1,abd2,aj1,aj2,gs4,gsfest} to prove
Cantor spectrum, whereas there are only the perturbative
methods from \cite{cs,kskew} to prove that the spectrum
contains an interval.

\bigskip

\begin{figure}[ht]
 \includegraphics[width=0.9\textwidth]{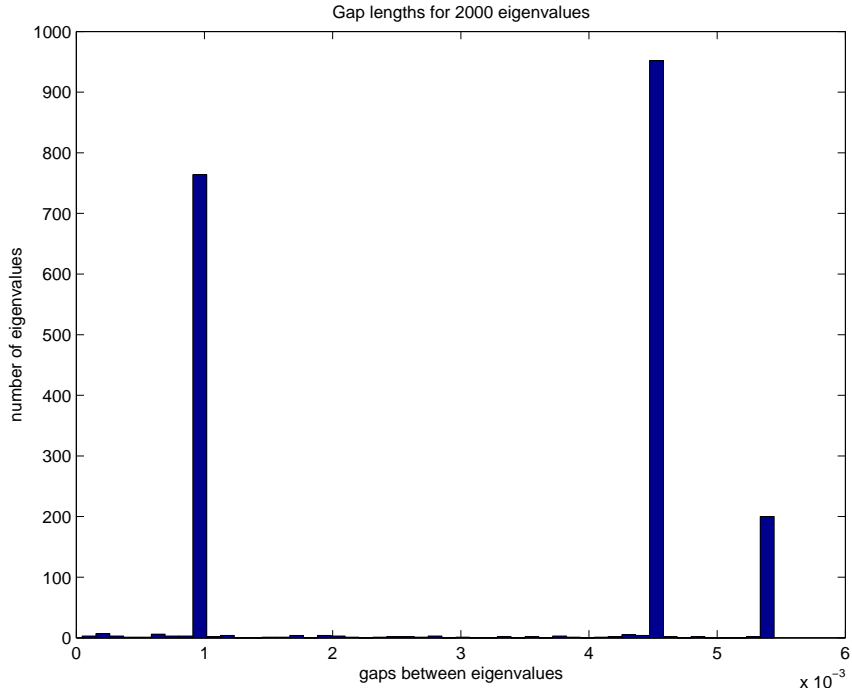}
 \caption{Zeros of $\Phi_{2000}(z; \beta)$ for $k=2$
  and $\omega = \sqrt{2}$.}
 \label{fig:1}
\end{figure}

Finally, I also want to address the zero distribution
of the para-orthogonal polynomials. This question has
not been discussed for Schr\"odinger operators yet.
Define for $\beta\in\partial\D$
\be
 \Phi_n(z; \beta) = z \Phi_{n-1}(z) - \ol{\beta} \Phi_{n-1}^{\ast}(z).
\ee
In difference to $\Phi_n(z)$ the zeros of $\Phi_n(z; \beta)$
are on the unit circle. Denote these zeros
by $\E^{2\pi\I \theta_1}, \dots, \E^{2\pi\I\theta_N}$.
An inspection of the proof of Theorem~\ref{thm:evskew}
shows that an appropriate adaption of the
results would remain true for $\Phi_n(z)$.

Before stating our main result,
I will now illustrate the behavior of the zeros with
some numerical computations. Order the values
$\theta_j$ such that
\be
 0 \leq \theta_1 < \theta_2 < \dots < \theta_N < 1.
\ee
Define the length of gaps by
\be
 g_j = \theta_{j+1} - \theta_j.
\ee
Figure~\ref{fig:1} and \ref{fig:2} show the distribution
of the values of $g_j$ for different values of $N$
when $k = 2$. One
sees that this distribution peaks at only three values.
This should remind one of the distribution of gap
lengths for the sequence of values $\{\eta n \pmod{1} \}_{n=1}^{N}$
for some value of $\eta$ and in fact, we will show this
in Theorem~\ref{thm:main4}. Also it should be pointed
out that these gap distributions do not converge.

\begin{figure}[ht]
 \includegraphics[width=0.9\textwidth]{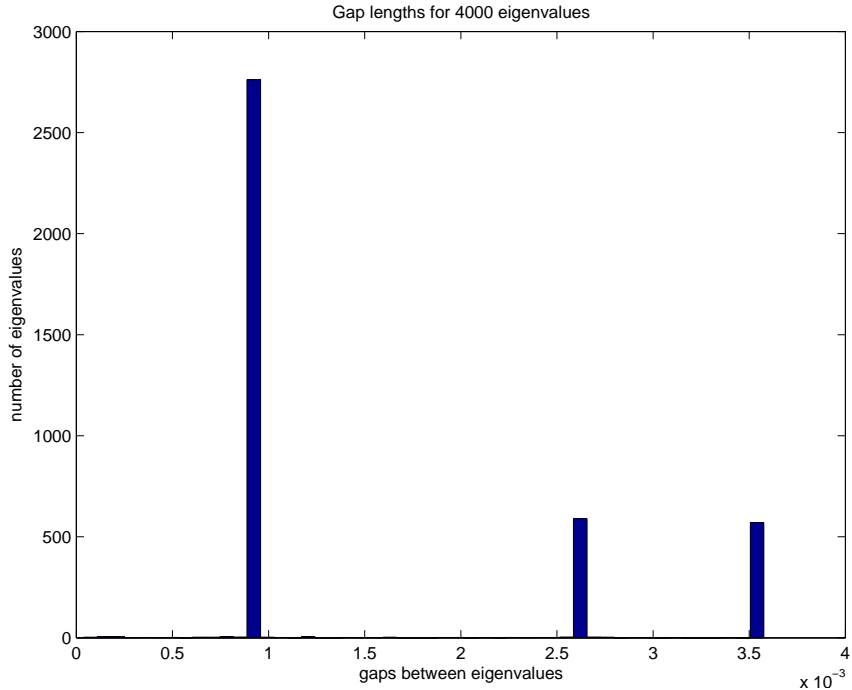}
 \caption{Zeros of $\Phi_{4000}(z; \beta)$ for $k=2$
  and $\omega = \sqrt{2}$.}
 \label{fig:2}
\end{figure}

On the other hand Figure~\ref{fig:3} shows the same
graphic for $k =3$ and the distribution resembles
an exponential distribution. One obtains similar
figures for $k\geq 4$. This is the same distribution
one would obtain if the $\theta_j$ were given by
a Poisson process and by \cite{st06} also if the
the Verblunsky coefficients $\alpha_n$ were given
by independent identically distributed random variables
whose distribution is non constant and rotationally invariant.

\begin{figure}[ht]
 \includegraphics[width=0.9\textwidth]{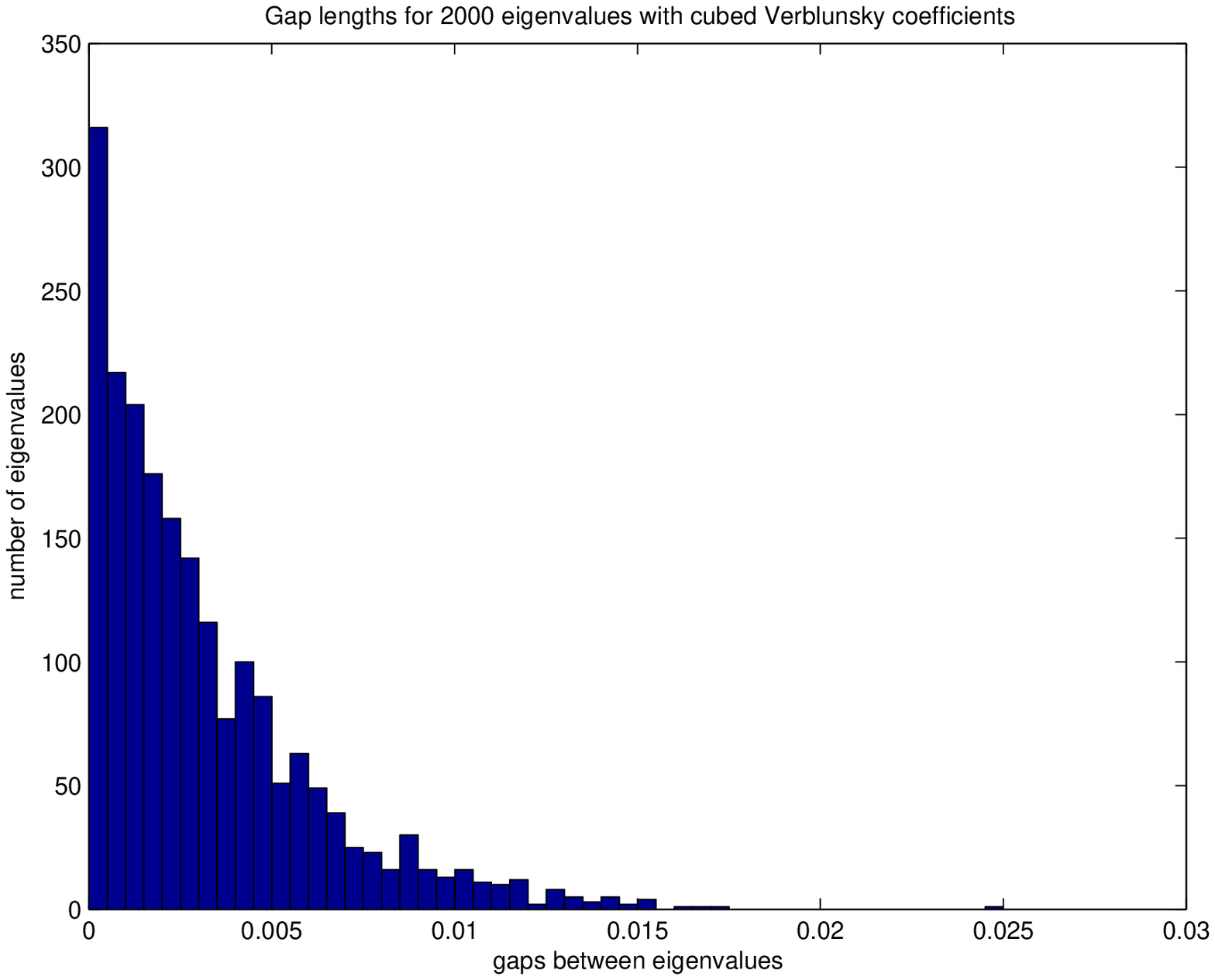}
 \caption{Zeros of $\Phi_{4000}(z; \beta)$ for $k=2$
  and $\omega = \sqrt{2}$.}
 \label{fig:3}
\end{figure}

Finally, in the case $k=1$, the (rotated) Geronimus Polynomials,
the assumptions of the Freud--Levin theorem hold
(Theorem~2.6.10 in \cite{szego}) and one has
clock spacing, so the spacing is given by the inverse
of the corresponding density of states measure.
This measure turns out to be non-constant, so
there is not a single peak.

In order to state our result, we need to
introduce more notation.
Define the Laplace functional of $N$ points
$x_1,\dots,x_N \in\T$ by
\be
 \mathfrak{L}_{\ul{x},N}(f) =
  \int_{\T} \exp\left(-\sum_{n=1}^{N} f(N x_n(\theta))
   \right)
  d\theta,
\ee
where $[-\frac{1}{2}, \frac{1}{2}) \ni 
x_n(\theta) = x_n - \theta \pmod{1}$ and $f\geq 0$
is continuous and compactly supported function.
See \cite{kisto09} for a discussion of Laplace
functionals related to zeros of paraorthogonal
polynomials.

Denote by $\mathfrak{L}^{R}_{\omega, N}$
the Laplace functional of the sequence of points
$\{n\omega \pmod{1}\}_{n=1}^{N}$. The behavior of this sequence
is well understood, see for example \cite{raven}.
In particular, this quantity does not converge to
a limit.  We will show

\begin{theorem}\label{thm:main4}
 Let $k=2$, $\tau > 1$ and assume that
 $\omega$ satisfies
 \be\label{eq:conddiop}
  \inf_{q \geq 1, q\in\Z} q^{\tau} \dist(q \omega, \Z) > 0.
 \ee
 Then for any positive, continuous, and compactly supported
 function $f:\R \to\R$, we have
 \be
  \lim_{N \to \infty} (\mathfrak{L}_{\ul{\theta}, N}(f) - 
   \mathfrak{L}^{R}_{2 \omega, N}(f)) = 0.
 \ee
\end{theorem}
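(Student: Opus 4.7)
The plan is to identify the zeros $\theta_j$ of $\Phi_N(\,\cdot\,;\beta)$ with the eigenvalues of the cut-off para-orthogonal CMV matrix $\mathcal{C}_N(\beta)$, and then to show that, up to a uniform shift depending on $N$ and $\beta$, this multiset coincides with the rotation sequence $\{2\omega n \pmod 1\}_{n=1}^N$ to within an error that is $o(1/N)$ on average in $\theta$. Because $\mathfrak{L}_{\ul x,N}(f)$ is invariant under simultaneous translation of the $x_n$ and is continuous under $o(1/N)$-perturbations of the points (using compact support and uniform continuity of $f$), such a matching of point configurations forces the difference of Laplace functionals to tend to $0$.

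First I would recast the eigenvalue problem in transfer-matrix language: the Szeg\H{o} recursion yields an $SU(1,1)$-cocycle over the skew-shift, and the eigenvalue condition for $\mathcal{C}_N(\beta)$ becomes a Pr\"ufer-type phase equation $\phi_N(\theta_j)\equiv c(\beta)\pmod 1$. The algebraic miracle driving the appearance of the rotation number $2\omega$ is the identity
\begin{equation}
 \alpha_{n+k} \;=\; \alpha_n\cdot\E^{2\pi\I\omega k^2}\cdot\E^{2\pi\I(2\omega k)n},
\end{equation}
which shows that translating the Verblunsky sequence by $k$ introduces a constant phase rotation together with a linear-in-$n$ twist of slope $2\omega k$. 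By the rotation mechanism of Proposition~\ref{prop:rotatealphas}, this linear twist corresponds exactly to rotating the spectrum by $\E^{2\pi\I\cdot 2\omega k}$, which is the source of the $2\omega$ inside the Laplace functional $\mathfrak{L}^{R}_{2\omega,N}$.

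To turn this heuristic into a quantitative eigenvalue matching, I would perform a block decomposition: partition $\{0,\dots,N-1\}$ into $N/L$ blocks of length $L$ with $L\to\infty$ and $L/N\to 0$ at a rate chosen in terms of $\tau$. On a block around $n_0$, Taylor expansion of the quadratic phase gives $\alpha_{n_0+m}\approx\lambda\E^{2\pi\I\omega n_0^2}\E^{2\pi\I(2\omega n_0)m}$, so the local Verblunsky coefficients approximate a Geronimus sequence of frequency $2\omega n_0$. Using the explicit form of the Geronimus transfer matrix, the Pr\"ufer-angle increment across each block is computed up to a small correction, and summing the block contributions gives a formula for $\phi_N(\theta)$ whose inverse locates the $\theta_j$ within $o(1/N)$ of a shifted copy of the orbit $\{2\omega n \pmod 1\}_{n=1}^N$.

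The main obstacle will be controlling the cross-block corrections with the sharpness needed at the $o(1/N)$ scale uniformly in $\theta$. Each correction is governed by an incomplete quadratic Weyl sum of the form $\sum_{m} \E^{2\pi\I(\omega m^2 + c_\theta m)}$, and summing these over $N/L$ blocks requires a sharp bound uniform in $c_\theta$. This is exactly where the Diophantine condition \eqref{eq:conddiop} enters, via Weyl's inequality for quadratic exponential sums: it guarantees that such sums are $o(L)$ at a quantitative rate, and hence that the cumulative error over the $N/L$ blocks is $o(1)$. Once this quantitative matching of point configurations is established, the convergence $\mathfrak{L}_{\ul\theta,N}(f) - \mathfrak{L}^{R}_{2\omega,N}(f)\to 0$ follows from the standard continuity properties of the Laplace functional.
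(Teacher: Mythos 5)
You have correctly isolated the key algebraic fact: translating the Verblunsky sequence by $k$ multiplies it by a constant phase $\E^{2\pi\I\omega k^2}$ and a linear twist $\E^{2\pi\I(2\omega k)n}$, and by Proposition~\ref{prop:rotatealphas} such a twist rotates the spectrum by $\E^{2\pi\I\cdot 2\omega k}$. This is exactly the mechanism behind the appearance of $2\omega$ in the statement. However, the technical route you propose has a genuine gap. Your block decomposition requires that, on a block of length $L$ around $n_0$ with $L\to\infty$, the coefficients $\alpha_{n_0+m}=\lambda\E^{2\pi\I\omega n_0^2}\E^{2\pi\I(2\omega n_0)m}\E^{2\pi\I\omega m^2}$ be well-approximated by the Geronimus sequence $\lambda\E^{2\pi\I\omega n_0^2}\E^{2\pi\I(2\omega n_0)m}$. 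The discrepancy factor $\E^{2\pi\I\omega m^2}$ is \emph{not} small in any useful sense once $m\gtrsim 1$, because $\omega$ is a fixed irrational of order one; so the ``local Geronimus'' picture is valid only on blocks of bounded length, which is useless for a quantitative eigenvalue matching at scale $o(1/N)$. Even setting that aside, a Geronimus block would produce clock-spaced eigenvalues governed by the Geronimus density of states, which is not what the rotation orbit $\{2\omega n\}$ looks like (by the three-gap theorem), so it is unclear how summing such block contributions would recover the rotation statistics.

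The paper's proof avoids this approximation problem entirely by using the rotation identity \emph{exactly}, not asymptotically, combined with localization. Positivity of the Lyapunov exponent (Theorem~\ref{thm:lyappos}) and a quantitative recurrence estimate for the skew-shift (Theorem~\ref{thm:skewergodic2}, which is where the Diophantine condition \eqref{eq:conddiop} actually enters, via a Weyl sum bound) give decay of the finite-volume Green's function and hence a test function $\psi$ supported on $O(N^\sigma)$ sites with $\|(\mathcal{E}^{[0,N-1]}-z)\psi\|\leq N^{-\eta}$ (Theorem~\ref{thm:existtest}). Translating $\psi$ to position $k$ and multiplying by the explicit phase factors $u_n$ from Lemma~\ref{lem:rotatealpha} produces a test function $\psi_k$ for which $\|(\mathcal{E}^{[0,N-1]}-z_k)\psi_k\|\leq N^{-\eta}$ with $z_k=\E^{2\pi\I(\vartheta-2k\omega)}$; the Diophantine condition then guarantees the $z_k$ are pairwise $N^{-\tau-\eps}$ separated, so each forces a \emph{distinct} eigenvalue. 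This yields Theorem~\ref{thm:evskew}, which says all but $N^{1-\sigma}$ of the eigenvalues lie within $N^{-1-\sigma}$ of the rotation orbit, and Theorem~\ref{thm:main4} then follows from the elementary continuity properties (i)--(iv) of the Laplace functional. In short: your algebraic starting point is right, but the transfer from algebra to eigenvalue asymptotics goes through localization and exact translation/rotation of approximate eigenvectors, not through Pr\"ufer phases and local Geronimus approximation.
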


This says that the values of $\mathfrak{L}_{\ul\theta, N}$
are deterministic in the large $N$ limit. However,
they do not converge to a single value as the one
for the irrational rotation does not.  
Using either Theorem~\ref{thm:main4} or easier
Theorem~\ref{thm:evskew}, one can show that the
gap distribution of the eigenvalues indeed
obeys the distribution shown in Figure~\ref{fig:1} and
\ref{fig:2}. The Diophantine
assumption \eqref{eq:conddiop} is necessary,
I sketch an argument in Remark~\ref{rem:liouville}.
Furthermore, it should be noted that Lebesgue
almost every $\omega$ satisfies \eqref{eq:conddiop}.

In this sense the case $k = 2$ is of intermediate
disorder, one has pure point spectrum with exponentially
decaying eigenfunctions, but one does not have sufficient
independence to obtain Poisson statistics.

The definition of the Laplace functional given here
is different from the one usually given in the theory
of point processes. There, one does not introduce
averaging over the unit circle by hand, but this comes
from the points $x_n$ being defined on some probability
space. In Section~\ref{sec:skew}, we will see that
our Verblunsky coefficients are defined on a probability
space, and that averaging over it in particular contains
the $\theta$ average. Hence, the name Laplace functional
is justified.

\begin{remark}\label{rem:liouville}
 Assume that for coprime integers $p,q$, $N$
 very large, and $\delta > 0$ a small parameter, we have
 that $|\omega-\frac{p}{q}|\leq \frac{1}{N^{3 + \delta}}$. 
 Then for $1 \leq n \leq N$, we have that
 \be
  \left|\alpha_n - \lambda \E^{2\pi\I \frac{p n^2}{q}}
   \right|\leq\frac{1}{N^{1 + \frac{\delta}{2}}}.
 \ee
 Since the Verblunsky coefficients $\lambda \E^{2\pi\I \frac{p n^2}{q}}$
 are $q$-periodic, the corresponding zeros of the
 paraorthogonal polynomials are clock-spaced, so
 of size $\frac{1}{N}$, whereas the points 
 $\{2 n\omega\pmod{1}\}_{n=1}^{N}$
 are all in a $\frac{1}{N^{2 + \delta}}$
 neighborhood of the points $\{\frac{\ell}{q}\}_{\ell=1}^{q}$.

 These two behaviors are clearly incompatible, and
 thus Theorem~\ref{thm:main4} cannot hold for
 Liouville frequencies.
\end{remark}

Let me now outline the rest of the content of 
the paper. Section~\ref{sec:pfmain1} discusses the
basic theory of half-line CMV matrices and gives
the proof of Theorem~\ref{thm:main1}. Then Section~\ref{sec:excmv}
introduces extended CMV operators, so ones 
defined on the whole-line, discusses restrictions
of these, defines the Green's function,
and derives useful formulas relating determinants
of CMV matrices to transfer matrices. This discussion
is somewhat more complicated than the case of
Schr\"odinger operators. Section~\ref{sec:ueCMV}
combines the formulas from the previous section
with the ones for ergodic CMV matrices.
In Section~\ref{sec:skew}, CMV matrices with built-in
rotational invariance are discussed and
Theorem~\ref{thm:main2} is proven.

In Section~\ref{sec:evstat}, we prove Theorem~\ref{thm:main4}
relying on results from Sections~\ref{sec:existtest}
and \ref{sec:greendecays}. Basically, Section~\ref{sec:greendecays}
improves the bounds on decay of the Green's function
obtained in Section~\ref{sec:ueCMV} from
unique ergodicity by using quantitative recurrence
results for the skew-shift discussed in Appendix~\ref{sec:dynskew}.
Section~\ref{sec:existtest} shows how to exploit
Section~\ref{sec:greendecays} to obtain good
test functions.

%
%
%

\section{A first look at the CMV matrix}
\label{sec:pfmain1}

In this section, we take a look at half-line CMV matrices
and provide a proof of Theorem~\ref{thm:main1}. In the following
sections, we will discuss whole line CMV matrices in more
details. Although most results in this section will be reproven
in later parts, I have included it, since it is closed
to the notation of \cite{opuc1,opuc2}.

Let $\{\alpha_n\}_{n=0}^{\infty}$ be a sequence of Verblunsky
coefficients. Define $\rho_n = (1 - |\alpha_n|^2)^{\frac{1}{2}}$
and the unitary matrices
\be
 \Theta_n = \begin{pmatrix} \ol{\alpha_n} & \rho_n \\ \rho_n
  & - \alpha_n \end{pmatrix}.
\ee
Define the operators $\mathcal{L}_+, \mathcal{M}_+$ by
\be
 \mathcal{L}_+ = \begin{pmatrix} \Theta_0 \\ & \Theta_2 \\ & & \ddots
  \end{pmatrix},\quad
 \mathcal{M}_+ = \begin{pmatrix} 1 \\ & \Theta_1 \\ & & \ddots
  \end{pmatrix}
\ee
where $1$ represents the identity $1 \times 1$ matrix.
The CMV matrix is then defined by $\mathcal{C} = \mathcal{L}_+
\mathcal{M}_+$ which will be five-diagonal and unitary. Its importance
comes from that the measure $\mu$ associated to the
Verblunsky coefficients $\{\alpha_n\}_{n=0}^{\infty}$ is
the spectral measure of $\delta_0$ with respect to
$\mathcal{C}$, so one has
\be
 \int_{\partial\D} z^n d\mu(z) =
  \spr{\delta_0}{\mathcal{C}^n \delta_0}.
\ee
We denote by $\mathrm{supp}_{\mathrm{ess}}(\mu)$ the
essential support of the measure $\mu$, that is the
support of $\mu$ with point masses removed.

\begin{lemma}\label{lem:translatealpha}
 Define $\ti\alpha_n = \alpha_{n+1}$. Let $\ti{\mu}$ be
 the measure corresponding to $\{\ti\alpha_n\}_{n=0}^{\infty}$.
 Then 
 \be
  \mathrm{supp}_{\mathrm{ess}}(\ti{\mu})
  =   \mathrm{supp}_{\mathrm{ess}}(\mu).
 \ee
\end{lemma}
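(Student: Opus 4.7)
The plan is to pass from measures to CMV matrices and then show that $\mathcal{C}$ and $\widetilde{\mathcal{C}}$ have the same essential spectrum by a coefficient-stripping / decoupling argument. Since $\delta_0$ is cyclic for $\mathcal{C}$ with spectral measure $\mu$, the support of $\mu$ minus its isolated point masses equals the spectrum of $\mathcal{C}$ minus its isolated eigenvalues, so $\mathrm{supp}_{\mathrm{ess}}(\mu) = \sigma_{\mathrm{ess}}(\mathcal{C})$, and likewise for the tilded objects. The task therefore reduces to proving $\sigma_{\mathrm{ess}}(\mathcal{C}) = \sigma_{\mathrm{ess}}(\widetilde{\mathcal{C}})$.

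I would carry this out by replacing $\alpha_0$ with some $\beta \in \partial\D$. This changes only the top-left $2 \times 2$ block of $\mathcal{L}_+$, so the resulting unitary $\mathcal{C}_\beta$ differs from $\mathcal{C}$ by an operator of rank at most $2$; hence $\sigma_{\mathrm{ess}}(\mathcal{C}) = \sigma_{\mathrm{ess}}(\mathcal{C}_\beta)$. Because $|\beta|=1$ forces $\rho_0 = 0$, the block $\Theta_0^{(\beta)} = \mathrm{diag}(\overline{\beta},-\beta)$ is diagonal, and a direct computation gives $\mathcal{C}_\beta \delta_0 = \overline{\beta}\,\delta_0$ and $\mathcal{C}_\beta^*\delta_0 = \beta\,\delta_0$. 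Thus $\mathcal{C}_\beta$ decouples as $[\overline{\beta}] \oplus \mathcal{C}''$ on $\C\delta_0 \oplus \ell^2(\{1,2,\dots\})$.

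It then remains to identify $\mathcal{C}''$ with $\widetilde{\mathcal{C}}$. Reading off the surviving blocks, $\mathcal{C}'' = \mathcal{L}'' \mathcal{M}''$ where $\mathcal{L}'' = [-\beta] \oplus \Theta_2 \oplus \Theta_4 \oplus \cdots$ and $\mathcal{M}'' = \Theta_1 \oplus \Theta_3 \oplus \cdots$. After shifting indices $n \mapsto n-1$, this is exactly the alternate CMV matrix $\widetilde{\mathcal{M}}_+ \widetilde{\mathcal{L}}_+$ for the sequence $\{\widetilde\alpha_n\}$, provided one picks $\beta = -1$ so that the opening $1 \times 1$ blocks $[-\beta]$ and $[1]$ of $\widetilde{\mathcal{M}}_+$ agree. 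Since $\widetilde{\mathcal{M}}_+\widetilde{\mathcal{L}}_+$ is unitarily equivalent to $\widetilde{\mathcal{C}}=\widetilde{\mathcal{L}}_+\widetilde{\mathcal{M}}_+$, this closes the identification, and the isolated summand $[\overline{\beta}]$ is irrelevant for $\sigma_{\mathrm{ess}}$.

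I expect the only real obstacle to be the index bookkeeping in verifying that the $1 \times 1$ and $2 \times 2$ blocks of $\mathcal{L}''$ and $\mathcal{M}''$ really do match those of $\widetilde{\mathcal{M}}_+\widetilde{\mathcal{L}}_+$ after the index shift; no analytic difficulty enters. A shorter alternative would be to simply invoke the coefficient-stripping formulas from \cite{opuc1,opuc2}, which already imply invariance of the essential spectrum (equivalently, the essential support of the spectral measure) under removal of a single initial Verblunsky coefficient.
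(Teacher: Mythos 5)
Your proof is correct, and its core idea---that $\mathcal{C}$ and $\widetilde{\mathcal{C}}$ differ only by a finite-rank perturbation and a one-step index shift---is the same as the paper's. The paper states this in two sentences: let $S$ be the backward shift on $\ell^2(\N)$; then $\mathcal{C}$ and $S^*\widetilde{\mathcal{C}}S$ differ by a finite-rank operator. Your decoupling argument (set $\alpha_0 = \beta = -1$ so that $\rho_0 = 0$, causing $\delta_0$ to split off as an invariant line) is an explicit and transparent way of exhibiting that finite-rank perturbation and the ensuing unitary conjugation by the shift.

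In fact, your version is more careful on one genuine point that the paper's sketch elides. Writing $S^*\widetilde{\mathcal{C}}S = (S^*\widetilde{\mathcal{L}}_+S)(S^*\widetilde{\mathcal{M}}_+S)$, one finds that $S^*\widetilde{\mathcal{L}}_+S$ agrees with $\mathcal{M}_+$ up to finite rank, and $S^*\widetilde{\mathcal{M}}_+S$ agrees with $\mathcal{L}_+$ up to finite rank. Thus $S^*\widetilde{\mathcal{C}}S$ is a finite-rank perturbation of the \emph{alternate} CMV matrix $\mathcal{M}_+\mathcal{L}_+$, not of $\mathcal{C} = \mathcal{L}_+\mathcal{M}_+$, because the even/odd parity of the $\Theta$-blocks flips under a one-step shift. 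One then still needs the observation you supply, namely that $\widetilde{\mathcal{M}}_+\widetilde{\mathcal{L}}_+$ is unitarily equivalent to $\widetilde{\mathcal{L}}_+\widetilde{\mathcal{M}}_+$ (conjugate by the unitary $\widetilde{\mathcal{M}}_+$). Your block bookkeeping with $\beta = -1$ checks out exactly: after shifting $n \mapsto n-1$, the restricted factor $\mathcal{L}'' = [1]\oplus\Theta_2\oplus\Theta_4\oplus\cdots$ becomes $\widetilde{\mathcal{M}}_+$, and $\mathcal{M}'' = \Theta_1\oplus\Theta_3\oplus\cdots$ becomes $\widetilde{\mathcal{L}}_+$, so $\mathcal{C}''$ is unitarily equivalent to $\widetilde{\mathcal{M}}_+\widetilde{\mathcal{L}}_+$ and hence to $\widetilde{\mathcal{C}}$. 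Combined with the rank-$\leq 2$ bound on $\mathcal{C}-\mathcal{C}_\beta$ and the irrelevance of the isolated summand $[\overline\beta]$, this yields $\sigma_{\mathrm{ess}}(\mathcal{C}) = \sigma_{\mathrm{ess}}(\widetilde{\mathcal{C}})$, as required.
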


\begin{proof}
 Clearly $\mathrm{supp}_{\mathrm{ess}}(\mu) = 
 \sigma_{\mathrm{ess}}(\mathcal{C})$. Let $S$ be the backward
 shift on $\ell^2(\N)$. Then $\mathcal{C}$ and
 $S^* \mathcal{\ti{C}} S$ differ by a finite rank operator.
 The claim follows.
\end{proof}

A similar proof implies that for all the translates
$\alpha^{\ell}_n = \alpha_{n+\ell}$ the corresponding
CMV matrices have the same essential spectrum.
Hence, for Verblunsky coefficients given by
\eqref{eq:defVeromegank}, one obtains that the family
of Verblunsky coefficients given by
\be
 \alpha^{\ell}_n = \lambda \exp\left(2\pi\I
  \left(\omega n^k + \sum_{j=0}^{k-1}
   \binom{k}{j} \omega \ell^{k-j} \cdot n^j\right)\right)
\ee
have the same essential spectrum. Define
for $y \in [0,1]^k$ a family of Verblunsky
coefficients by
\be
 \ti{\alpha}_{y, n} = \lambda \exp\left(2\pi\I
  \left(\omega n^k + \sum_{j=0}^{k-1}
   y_j \cdot n^j\right)\right).
\ee

\begin{lemma}\label{lem:sigmaCy}
 We have for any $y\in [0,1]^k$ that
 \be
  \sigma_{\mathrm{ess}}(\mathcal{C}) =
   \sigma_{\mathrm{ess}}(\widetilde{\mathcal{C}}_{y})
 \ee
\end{lemma}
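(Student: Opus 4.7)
The plan is to combine the already-established identity $\sigma_{\mathrm{ess}}(\mathcal{C}) = \sigma_{\mathrm{ess}}(\widetilde{\mathcal{C}}_{y(\ell)})$ (valid for $y(\ell)_j = \binom{k}{j}\om\ell^{k-j}$ with $\ell\in\Z$) with a density-plus-Weyl-sequence argument to reach arbitrary $y^*\in[0,1]^k$. The first step is density of $\{y(\ell):\ell\in\Z\}$ in $[0,1]^k$: by Weyl's polynomial equidistribution theorem, every nontrivial integer linear combination $\om\sum_{i=1}^{k}b_i\ell^i$ is a polynomial in $\ell$ with irrational leading coefficient, so $(\om\ell,\om\ell^2,\ldots,\om\ell^k)$ is jointly equidistributed modulo $1$ in $\T^k$; componentwise multiplication by the nonzero binomials $\binom{k}{j}$ preserves density.

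The second step is transfer via Weyl's singular-sequence criterion. The key observation is that $\widetilde{\mathcal{C}}_{y(\ell)}$ coincides with the $\ell$-step left shift of $\mathcal{C}$ modulo a finite-rank boundary perturbation, so a singular sequence for one translates to a singular sequence for the other by a simple shift. Given $\lam\in\sigma_{\mathrm{ess}}(\mathcal{C})$, I would take an orthonormal singular sequence $g_j$ with $\mathrm{supp}(g_j)\subset[N_j,N_j+M_j]$, $N_j\to\infty$, choose $\ell_{m_j}\leq N_j-j$ with $y(\ell_{m_j})$ approximating $y^*$ to accuracy $1/N_j^{k-1}$ (possible by density), and set $\hat{g}_j(n)=g_j(n+\ell_{m_j})$. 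The shift identification gives $\|(\widetilde{\mathcal{C}}_{y(\ell_{m_j})}-\lam)\hat{g}_j\|\to 0$, and the Verblunsky approximation on $\mathrm{supp}(\hat{g}_j)$ gives $\|(\widetilde{\mathcal{C}}_{y^*}-\widetilde{\mathcal{C}}_{y(\ell_{m_j})})\hat{g}_j\|\to 0$. Extracting a further subsequence with well-separated supports ensures orthonormality and $\hat{g}_j\rightharpoonup 0$, yielding $\lam\in\sigma_{\mathrm{ess}}(\widetilde{\mathcal{C}}_{y^*})$. The reverse inclusion $\sigma_{\mathrm{ess}}(\widetilde{\mathcal{C}}_{y^*})\subseteq\sigma_{\mathrm{ess}}(\mathcal{C})$ is obtained symmetrically: start from a singular sequence for $\widetilde{\mathcal{C}}_{y^*}$ supported at large indices, use the Verblunsky approximation to convert it into one for some $\widetilde{\mathcal{C}}_{y(\ell_m)}$, then invoke $\sigma_{\mathrm{ess}}(\widetilde{\mathcal{C}}_{y(\ell_m)})=\sigma_{\mathrm{ess}}(\mathcal{C})$.

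The main obstacle is coordinating scales: $N_j$ must tend to infinity (for weak convergence), $N_j-\ell_{m_j}$ must also tend to infinity (to erase boundary effects from the shift identification), and $y(\ell_{m_j})$ must approximate $y^*$ fast enough to beat the polynomial $n^{k-1}$ growth of the Verblunsky error $|\widetilde{\alpha}_{y(\ell_{m_j}),n}-\widetilde{\alpha}_{y^*,n}|\leq 2\pi\sum_{j=0}^{k-1}|y_j(\ell_{m_j})-y^*_j|\,n^j$ on $\mathrm{supp}(\hat{g}_j)$. Since density from Weyl equidistribution provides approximations of arbitrary accuracy, all three scales can be reconciled by a diagonal argument.
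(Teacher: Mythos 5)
Your overall strategy -- density of $\{y(\ell)\}_\ell$ (which you derive from Weyl equidistribution; the paper cites \cite{krho1}) combined with Weyl singular sequences and the finite--rank--after--shift observation -- is exactly what the paper's terse ``by strong convergence'' is standing in for. However, the way you implement the shift in the inclusion $\sigma_{\mathrm{ess}}(\mathcal{C})\subseteq\sigma_{\mathrm{ess}}(\widetilde{\mathcal{C}}_{y^*})$ has a genuine gap that the diagonal argument does not repair. You shift \emph{left} by $\ell_{m_j}$, which forces the two competing constraints $\ell_{m_j}\leq N_j-j$ (boundary) and $|y(\ell_{m_j})-y^*|\lesssim (N_j+M_j-\ell_{m_j})^{-(k-1)}$ (Verblunsky error). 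Density only guarantees the existence of \emph{some} $\ell$ with the second property; it gives no control on how large the first such $\ell$ is relative to $N_j$, and for Liouville $\omega$ the smallest such $\ell$ can vastly exceed $N_j$. Reordering the choices does not help either: if you fix $\ell_p$ first and then take $j$ large enough that $N_j-p\geq\ell_p$, the upper end $N_j+M_j-\ell_p$ of $\mathrm{supp}(\hat g_p)$ grows with $j$ and destroys the accuracy you already committed to. The circularity is structural, not a matter of bookkeeping. Also note that strong operator convergence of $\widetilde{\mathcal{C}}_{y(\ell_s)}\to\widetilde{\mathcal{C}}_{y^*}$ alone does \emph{not} imply $\sigma_{\mathrm{ess}}(\widetilde{\mathcal{C}}_{y^*})\subseteq\sigma_{\mathrm{ess}}(\mathcal{C})$ -- one can cook up diagonal counterexamples -- so the shift structure really is doing the work here, and it has to be used in the correct direction.

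The correct implementation shifts \emph{right}, which removes the upper bound on the shift entirely. For $\sigma_{\mathrm{ess}}(\widetilde{\mathcal{C}}_{y^*})\subseteq\sigma_{\mathrm{ess}}(\mathcal{C})$: take the Weyl sequence $\psi_j$ for $\widetilde{\mathcal{C}}_{y^*}$ with $\mathrm{supp}(\psi_j)\subset[a_j,b_j]$, $a_j\to\infty$; for each $j$ use density to find $\ell_j$ (as large as needed) with $|\ti\alpha_{y(\ell_j),n}-\ti\alpha_{y^*,n}|<1/j$ for $n\leq b_j+10$; then $\hat\psi_j(n)=\psi_j(n-\ell_j)$ is supported far out in $\ell^2(\N)$ and, since $\alpha_{n+\ell_j}=\ti\alpha_{y(\ell_j),n}$, is a Weyl sequence for $\mathcal{C}$. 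For the reverse inclusion you cannot ``convert the whole singular sequence into one for a single fixed $\widetilde{\mathcal{C}}_{y(\ell_m)}$'' as you propose -- the Verblunsky mismatch $|y(\ell_m)-y^*|\,b_j^{k-1}$ blows up as $j\to\infty$ for any fixed $\ell_m$. Instead, run the same right-shift scheme with the roles swapped: the translates $\{\ti\alpha_{y^*,n+\ell}\}_n$ are again of the form $\ti\alpha_{y''(\ell),n}$ with $\{y''(\ell)\}_\ell$ dense in $\T^k$ (same Weyl computation), so one can pick $\ell_j$ with $y''(\ell_j)$ close to $0$ on the support of the Weyl sequence $g_j$ for $\mathcal{C}$ and shift $g_j$ right by $\ell_j$ to land in $\sigma_{\mathrm{ess}}(\widetilde{\mathcal{C}}_{y^*})$. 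This is the symmetry that the paper's ``the other inclusion can be proven in a similar way'' alludes to.
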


\begin{proof}
 Given $y$, there exists a sequence $\ell_s$ such that
 \[
  \binom{k}{j} \omega \ell_s^{k-j} \to y_j
 \]
 for $0\leq j\leq k-1$ as $s\to\infty$
 (see Theorem~2.2 and Lemma~2.3 in \cite{krho1}).
 By strong convergence, one thus obtains that
 \[
  \sigma_{\mathrm{ess}}(\mathcal{C}) \supseteq
   \sigma_{\mathrm{ess}}(\widetilde{\mathcal{C}}_{y}).
 \]
 The other inclusion can be proven in a similar way.
\end{proof}

Results similar to Lemma~\ref{lem:sigmaCy} have been
discussed in \cite{lsess}.
For the proof of Theorem~\ref{thm:main1},
we will also need

\begin{proposition}\label{prop:rotatealpha}
 Define Verblunsky coefficients by
 $\ti{\alpha}_n = \E^{2\pi\I \eta n} \alpha_n$.
 Then
 \be
  \mathrm{supp}(\ti\mu) = \E^{-2\pi\I \eta}
   \mathrm{supp}(\mu).
 \ee
\end{proposition}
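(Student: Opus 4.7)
The plan is to exhibit explicitly the measure associated with $\{\ti\alpha_n\}$ as a literal rotation of $\mu$, and then invoke the uniqueness half of Verblunsky's theorem. The natural candidate is the pushforward $\ti\mu$ of $\mu$ under $z\mapsto\E^{-2\pi\I\eta}z$, whose support is automatically $\E^{-2\pi\I\eta}\,\mathrm{supp}(\mu)$; one then only needs to check that $\ti\mu$ has the prescribed Verblunsky coefficients.

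First, I would identify the monic OPUC for $\ti\mu$. A change of variables in the orthogonality relations $\int\ti\Phi_n(z)\ol{z}^{k}\,d\ti\mu(z)=0$ for $k<n$ forces the ansatz $\ti\Phi_n(z) = \E^{-2\pi\I\eta n}\Phi_n(\E^{2\pi\I\eta}z)$, which is automatically monic of degree $n$. Next, I would use the definition $\Phi_n^{\ast}(z)=z^{n}\ol{\Phi_n(\ol{z}^{-1})}$ to express $\ti\Phi_n^{\ast}$ cleanly in terms of $\Phi_n^{\ast}(\E^{2\pi\I\eta}z)$, substitute both into the Szeg\H{o} recursion \eqref{eq:defPhin} for $\ti\Phi_n$, and compare against the recursion for $\Phi_n$ evaluated at $\E^{2\pi\I\eta}z$. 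Matching the coefficients of $\Phi_n(\E^{2\pi\I\eta}z)$ and $\Phi_n^{\ast}(\E^{2\pi\I\eta}z)$ pins down the Verblunsky coefficients of $\ti\mu$ to agree with $\ti\alpha_n$, up to the standard $n$-dependent phase convention. Equivalently, one can simply read them off from $\ti\Phi_n(0)=-\ol{\ti\alpha_{n-1}}$ together with $\Phi_n(0)=-\ol{\alpha_{n-1}}$.

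Finally, Verblunsky's theorem makes the correspondence between nontrivial probability measures on $\partial\D$ and sequences in $\D^{\N}$ a bijection, so $\ti\mu$ must be the measure associated with $\{\ti\alpha_n\}$, and the support identity follows. I do not anticipate any real obstacle: the whole argument amounts to a change of variables plus a uniqueness statement, and the only care needed is bookkeeping of the unit-modulus prefactors, in particular to match the author's index convention for the Verblunsky sequence.
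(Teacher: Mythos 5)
Your route is the same as the paper's first proof: the paper invokes the explicit formulas for rotating an OPUC measure (citing Appendix~A.H of Simon's OPUC Part~2), and you rederive those formulas from scratch — push forward $\mu$, identify the monic OPUC of the pushforward, read off Verblunsky coefficients, and invoke uniqueness. The paper also gives a second, genuinely different proof in Section~\ref{sec:skew}, which uses the intertwining operator $U$ of Lemma~\ref{lem:rotatealpha} to transport Weyl sequences and thereby compare $\sigma_{\mathrm{ess}}(\mathcal{C})$ with $\sigma_{\mathrm{ess}}(\widetilde{\mathcal{C}})$; that operator-theoretic argument sidesteps the OPUC formulas entirely.

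There is, however, a real wrinkle that your phrase ``up to the standard $n$-dependent phase convention'' does not actually dispose of. Carrying out the computation: if $\nu$ is the pushforward of $\mu$ under $z\mapsto\E^{-2\pi\I\eta}z$, then $\nu_n(z)=\E^{-2\pi\I\eta n}\Phi_n(\E^{2\pi\I\eta}z)$, and reading off $\alpha_n^{\nu}=-\,\ol{\nu_{n+1}(0)}$ gives $\alpha_n^{\nu}=\E^{2\pi\I\eta(n+1)}\alpha_n$, \emph{not} $\E^{2\pi\I\eta n}\alpha_n$. This is not a bookkeeping choice: $\alpha_n^{\nu}$ and $\ti\alpha_n$ differ by the constant unimodular factor $\E^{2\pi\I\eta}$, so $\nu$ and $\ti\mu$ are Aleksandrov measures of one another rather than equal. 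Aleksandrov rotation preserves $\mathrm{supp}_{\mathrm{ess}}$ but can move isolated point masses, so what your argument actually gives is $\mathrm{supp}_{\mathrm{ess}}(\ti\mu)=\E^{-2\pi\I\eta}\,\mathrm{supp}_{\mathrm{ess}}(\mu)$, with equality of full supports needing an extra argument (or failing). This is consistent with how the paper uses the proposition — immediately afterward it is applied only to $\sigma_{\mathrm{ess}}$, and the paper's second proof in Section~\ref{sec:skew} likewise establishes only the essential-spectrum relation via Weyl sequences. So: same method as the paper, but you should either prove the statement with $\mathrm{supp}_{\mathrm{ess}}$, or explicitly insert the step ``$\ti\mu$ is the Aleksandrov measure $\nu_{\E^{-2\pi\I\eta}}$, hence has the same essential support as $\nu$,'' rather than claiming the phases can be matched.
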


\begin{proof}
 This follows from the formulas in
 Appendix~A.H. in \cite{opuc2}. I will
 also give another proof in Section~\ref{sec:skew}.
\end{proof}

Given $y \in [0,1]^k$ and $\eta \in [0,1]$ define
\be
 \hat{y}_j = \begin{cases} y_j,&j=0, 2 \leq j \leq k-1;\\
   y_1 + \eta,& j=1.\end{cases}
\ee
Proposition~\ref{prop:rotatealpha} shows that
\be
 \sigma_{\mathrm{ess}}(\widetilde{\mathcal{C}}_{y}) =
  \E^{- 2\pi\I \eta}
   \sigma_{\mathrm{ess}}(\widetilde{\mathcal{C}}_{\hat{y}}).
\ee
Having this, we are now ready for

\begin{proof}[Proof of Theorem~\ref{thm:main1}]
 The results discussed so far imply that 
 $\sigma_{\mathrm{ess}}(\mathcal{C})$ is a non-empty,
 rotationally invariant, subset of $\partial\D$.
 Hence, we must have
 \[
  \sigma_{\mathrm{ess}}(\mathcal{C}) = \partial\D
 \] 
 Since also $\sigma_{\mathrm{ess}}(\mathcal{C})\subseteq
 \sigma(\mathcal{C})\subseteq\partial\D$,
 the claim follows.
\end{proof}

%
%
%

\section{Extended CMV operators}
\label{sec:excmv}

In this section, we introduce extended CMV operators
and discuss their properties that will be useful to
us. See also \cite{gzweyl} and Section~10.5 in \cite{opuc2}
for discussions from different viewpoints.

Let now $\{\alpha_n\}_{n\in\Z}$ be a bi-infinite
sequence of Verblunsky coefficients, i.e.
$\alpha_n \in \D$ although we will discuss setting
certain $\alpha_{n}$ to values in $\ol\D$ below.
Recall that $\rho_n = (1-|\alpha_n|^2)^{\frac{1}{2}}$ and
\be
 \Theta_n = \begin{pmatrix} \ol{\alpha_n} & \rho_n \\
  \rho_n & - \alpha_n \end{pmatrix}
\ee
viewed as acting on $\ell^2(\{n,n+1\})$. Define
\be
 \mathcal{L} = \bigoplus_{n\text{ even}} \Theta_n,\quad
  \mathcal{M} = \bigoplus_{n\text{ odd}} \Theta_n
\ee
and the extended CMV operator $\mathcal{E} = \mathcal{L}
 \cdot \mathcal{M}$. We note

\begin{lemma}
 $\mathcal{E}$, $\mathcal{L}$, and $\mathcal{M}$ are
 unitary operators $\ell^2(\Z)\to\ell^2(\Z)$. Furthermore,
 $\mathcal{L}$ leaves the subspaces $\ell^2(\{n,n+1\})$
 for $n$ even invariant, whereas $\mathcal{L}$ does this for
 $n$ odd.
\end{lemma}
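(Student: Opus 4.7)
The plan is entirely a direct verification, broken into two essentially independent pieces.

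First I would check that each $2\times 2$ block $\Theta_n$ is unitary. Computing
\[
 \Theta_n^{\ast}\Theta_n
  = \begin{pmatrix} \alpha_n & \rho_n \\ \rho_n & -\ol{\alpha_n}\end{pmatrix}
    \begin{pmatrix} \ol{\alpha_n} & \rho_n \\ \rho_n & -\alpha_n\end{pmatrix}
  = \begin{pmatrix} |\alpha_n|^2 + \rho_n^2 & \alpha_n\rho_n - \rho_n\alpha_n \\
                    \rho_n\ol{\alpha_n} - \ol{\alpha_n}\rho_n & \rho_n^2 + |\alpha_n|^2 \end{pmatrix},
\]
the diagonal entries equal $|\alpha_n|^2 + (1 - |\alpha_n|^2) = 1$ by the definition of $\rho_n$, and the off-diagonal entries obviously vanish. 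The same computation with the order reversed gives $\Theta_n\Theta_n^{\ast} = \id$. Hence $\Theta_n$ is unitary on $\ell^2(\{n,n+1\})$. (The same argument works when $\alpha_n \in \partial\D$, in which case $\rho_n = 0$ and $\Theta_n$ reduces to a diagonal unitary — this is the only thing one needs when checking the remark about allowing $\alpha_n \in \ol\D$.)

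Second I would read off the invariance properties directly from the definitions. Since $\mathcal{L} = \bigoplus_{n \text{ even}} \Theta_n$ is a direct sum of operators acting on the mutually orthogonal two-dimensional subspaces $\ell^2(\{n,n+1\})$ with $n$ even, each such subspace is manifestly $\mathcal{L}$-invariant, and $\mathcal{L}$ is unitary as an orthogonal sum of unitaries. The identical argument, with ``even'' replaced by ``odd'', handles $\mathcal{M}$. (Here I note that $\Z$ is indeed the disjoint union of the pairs $\{n,n+1\}$ for $n$ even, respectively for $n$ odd, so each decomposition exhausts $\ell^2(\Z)$.) Finally $\mathcal{E} = \mathcal{L}\mathcal{M}$ is unitary as a product of unitaries on the same Hilbert space.

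There is no substantive obstacle: the proof is a one-line matrix computation plus an appeal to the direct-sum definitions. The only minor point to flag is the small typo in the statement — the second occurrence of $\mathcal{L}$ should read $\mathcal{M}$, as is clear from the definitions of $\mathcal{L}$ and $\mathcal{M}$.
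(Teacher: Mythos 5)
Your proof is correct and is exactly the routine verification the paper leaves to the reader (no proof is given in the text). You also correctly spot the typo in the statement: the second occurrence of $\mathcal{L}$ should be $\mathcal{M}$.
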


We will now discuss various restrictions of CMV
operators. First denote by $P^{[a,b]}$ the projection
$\ell^2(\Z) \to \ell^2([a,b])$. We define
\be
 X^{[a,b]} = (P^{[a,b]})^* X P^{[a,b]}
\ee
for $X\in\{\mathcal{E},\mathcal{M},\mathcal{L}\}$.

\begin{lemma}
 $\mathcal{E}^{[a,b]} = \mathcal{L}^{[a,b]} \mathcal{M}^{[a,b]}$.
\end{lemma}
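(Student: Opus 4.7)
The plan is to reduce the claim to the vanishing identity $P^{[a,b]} \mathcal{L} (I - P^{[a,b]}) \mathcal{M} P^{[a,b]} = 0$, which together with $(P^{[a,b]})^2 = P^{[a,b]}$ immediately yields $\mathcal{E}^{[a,b]} = \mathcal{L}^{[a,b]} \mathcal{M}^{[a,b]}$. Intuitively, one has to show that whatever ``leakage'' $\mathcal{M}$ produces outside $[a,b]$ when applied to a vector supported in $[a,b]$ is then pushed by $\mathcal{L}$ further into the complement of $[a,b]$, where it gets annihilated by the outer projection.

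Given $v$ supported in $[a,b]$, I would first describe $(I - P^{[a,b]}) \mathcal{M} v$ using the block structure of $\mathcal{M}$. Because the $\mathcal{M}$-blocks are $\Theta_n$ on $\ell^2(\{n, n+1\})$ with $n$ odd, any out-of-range contribution can only come from a boundary block that straddles an endpoint of $[a,b]$: at the left endpoint this happens precisely when $a$ is even (through $\Theta_{a-1}$, producing mass at position $a - 1$), and at the right endpoint precisely when $b$ is odd (through $\Theta_b$, producing mass at position $b+1$). The remaining parities contribute no leakage at all at that endpoint. Applying $\mathcal{L}$, whose blocks $\Theta_n$ live on $\ell^2(\{n,n+1\})$ with $n$ even, the position $a - 1$ lies inside the $\mathcal{L}$-block $\Theta_{a - 2}$ on $\{a-2, a-1\}$, and the position $b + 1$ lies inside $\Theta_{b+1}$ on $\{b+1, b+2\}$. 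Both of these two-site sets lie wholly outside $[a,b]$, so $\mathcal{L}$ never brings the leakage back in, and the outer $P^{[a,b]}$ kills it.

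The main obstacle is really just bookkeeping across the four parity combinations of $a$ and $b$; there is no analytic content, the entire identity being a consequence of how the $\mathcal{L}$-pairs $\{2k,2k+1\}$ and the $\mathcal{M}$-pairs $\{2k-1,2k\}$ sit as shifts of one another along $\Z$, so that any pair straddling a boundary on one side is matched by a pair fully outside $[a,b]$ on the other.
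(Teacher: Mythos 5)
Your argument is correct, and it is precisely the computation the paper leaves to the reader (the printed proof is the single word ``Compute.''). Your reduction to $P^{[a,b]}\mathcal{L}(I-P^{[a,b]})\mathcal{M}P^{[a,b]}=0$ and the parity bookkeeping of the boundary $\Theta$-blocks is a clean way to organize that computation.
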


\begin{proof}
 Compute.
\end{proof}

It is easy to check that the operator $\mathcal{E}^{[a,b]}$
will no longer be unitary, but it will still be an useful
object. Let now $\beta \in \partial\D$ and $a\in\Z$
and consider the modified Verblunsky coefficients
\be
 \ti{\alpha}_n = \begin{cases} \alpha_n,& n\neq a\\
  \beta,& n=a.\end{cases}
\ee
We then have that $\widetilde{\mathcal{E}}$,
$\widetilde{\mathcal{L}}$, and
$\widetilde{\mathcal{M}}$ leave the
spaces $\ell^2(\{a+1, a+2, \dots\})$ and $\ell^2(\{\dots, a-1,a\})$
invariant. In particular, we can define unitary
restrictions
\be
 \mathcal{E}^{[a+1, \infty)}_{\beta, \bullet} = P^{[a+1,\infty)}
  \widetilde{\mathcal E} P^{[a+1, \infty)},\quad
 \mathcal{E}^{(-\infty,a]}_{\bullet, \beta} = P^{(-\infty,a]}
  \widetilde{\mathcal E} P^{(-\infty,a]}.
\ee

\begin{lemma}\label{lem:relationC}
 Let $\mathcal{C}$ be the CMV operator with Verblunksy
 coefficients $\{\alpha_n\}_{n=0}^{\infty}$. Then
 \be
  \mathcal{C} = \mathcal{E}^{[0,\infty)}_{1, \bullet}.
 \ee
 Denote by $R$ the identification $\ell^2(\{\dots, -2,-1\})$
 with $\ell^2(\{0,1,2,\dots\})$ and
 by $\mathcal{C}_-$ the CMV operator with Verblunsky
 coefficients $\{-\ol{\alpha}_{-n-1}\}_{n=0}^{\infty}$.
 Then
 \be
  \mathcal{C}_- =  R \mathcal{E}^{(-\infty,-1]}_{\bullet, 1} R^{*}.
 \ee
\end{lemma}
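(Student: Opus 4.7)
The plan is to exploit that $\beta = 1$ forces $\rho_{-1} = (1-|1|^2)^{1/2} = 0$, so $\widetilde{\Theta}_{-1} = \begin{pmatrix} 1 & 0 \\ 0 & -1 \end{pmatrix}$ becomes a diagonal block that decouples sites $-1$ and $0$. Since $-1$ is odd, the operator $\widetilde{\mathcal{L}} = \mathcal{L}$ is unchanged; its constituent blocks $\Theta_n$ ($n$ even) act on $\ell^2(\{n,n+1\})$ with $n \neq -1$, so $\mathcal{L}$ automatically respects the splitting $\ell^2(\Z) = \ell^2((-\infty,-1]) \oplus \ell^2([0,\infty))$. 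The diagonal form of $\widetilde{\Theta}_{-1}$ ensures $\widetilde{\mathcal{M}}$ respects this same splitting. Therefore $\widetilde{\mathcal{E}} = \widetilde{\mathcal{L}}\widetilde{\mathcal{M}}$ splits as an orthogonal direct sum of unitary restrictions, which is what makes the restrictions $\mathcal{E}^{[0,\infty)}_{1,\bullet}$ and $\mathcal{E}^{(-\infty,-1]}_{\bullet,1}$ well-defined unitary operators.

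For part (i), I would simply read off the restrictions: $\widetilde{\mathcal{L}}|_{\ell^2([0,\infty))} = \Theta_0 \oplus \Theta_2 \oplus \cdots$ is exactly $\mathcal{L}_+$, while $\widetilde{\mathcal{M}}|_{\ell^2([0,\infty))}$ is a $1 \times 1$ scalar block on $\delta_0$ (the surviving corner of $\widetilde{\Theta}_{-1}$) together with $\Theta_1 \oplus \Theta_3 \oplus \cdots$, matching $\mathcal{M}_+$. Their product is $\mathcal{C}$ by the definition at the start of Section~\ref{sec:pfmain1}.

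For part (ii), the analogous restrictions are $\widetilde{\mathcal{L}}|_{\ell^2((-\infty,-1])} = \cdots \oplus \Theta_{-4} \oplus \Theta_{-2}$ and $\widetilde{\mathcal{M}}|_{\ell^2((-\infty,-1])} = \cdots \oplus \Theta_{-3} \oplus [\text{scalar on } \delta_{-1}]$. Conjugating by $R$, each $2 \times 2$ block $\Theta_n$ at $\ell^2(\{n,n+1\})$ is transported to a $2 \times 2$ block at the reflected pair of sites. The order reversal induced by $R$ is equivalent to a basis swap that exchanges the $(1,1)$ and $(2,2)$ entries of $\Theta_n$ while preserving the off-diagonal $\rho_n$. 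This swap converts the diagonal pair $(\ol{\alpha_n}, -\alpha_n)$ into $(-\alpha_n, \ol{\alpha_n})$, which is exactly the diagonal of a CMV block with new Verblunsky coefficient $-\ol{\alpha_n}$. Relabeling indices then shows that $R\,\mathcal{E}^{(-\infty,-1]}_{\bullet,1}\,R^*$ is the CMV operator built from $\{-\ol{\alpha_{-n-1}}\}_{n\geq 0}$, which is $\mathcal{C}_-$.

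The main obstacle is the bookkeeping in part (ii): one must verify that under $R$ the parity of each reflected block index matches its intended role in $\mathcal{L}_+$ versus $\mathcal{M}_+$ of the new CMV operator, and that the boundary contribution from $\widetilde{\Theta}_{-1}$ lands as the $1 \times 1$ identity block of the new $\mathcal{M}_+$. No analytic content enters; once the reflection is tracked block by block, the lemma reduces to a structural matrix identity.
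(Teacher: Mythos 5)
Your high-level plan is exactly right: set $\ti\alpha_{-1}=\beta$ with $|\beta|=1$, observe that $\rho_{-1}=0$ makes $\widetilde\Theta_{-1}$ diagonal so $\widetilde{\mathcal M}$ decouples $\delta_{-1}$ from $\delta_0$ (and $\mathcal L$ already respects the split since it only involves even blocks), and then read off the two unitary pieces. That is precisely the computation the paper is gesturing at with its one-line proof. The problem is that you assert the resulting blocks ``match'' the target operators without actually checking the entries, and when you do check them the bookkeeping does not close as stated.

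Concretely, writing $\widetilde\Theta_{-1}=\begin{pmatrix}1&0\\0&-1\end{pmatrix}$ in the basis $(\delta_{-1},\delta_0)$, the scalar that survives in $\widetilde{\mathcal M}|_{\ell^2([0,\infty))}$ is the lower-right entry, namely $-\ti\alpha_{-1}=-1$, not the upper-left entry $1$. Since $\mathcal M_+$ has the block $1$ at $\delta_0$, the identity $\mathcal C=\mathcal E^{[0,\infty)}_{1,\bullet}$ requires $-\ti\alpha_{-1}=1$, i.e.\ the boundary value $-1$, not $1$ (this is the convention $\alpha_{-1}=-1$ used in Simon, OPUC, Section~10.5). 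Your claim that the surviving corner ``match[es] $\mathcal M_+$'' is exactly the unverified step, and it is false for $\beta=1$.

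For part (ii) the same issue recurs in the index accounting that you flag as ``the main obstacle'' but then skip. Under $R\delta_{-n-1}=\delta_n$, the block $\Theta_{-m}$ (acting on $\{-m,-m+1\}$) is carried to a block on $\{m-2,m-1\}$, and the basis swap you describe correctly converts $\Theta(\alpha_{-m})$ into $\Theta(-\ol{\alpha_{-m}})$; but since it lands on the sites $\{m-2,m-1\}$ the resulting half-line CMV has $n$-th Verblunsky coefficient $-\ol{\alpha_{-n-2}}$, not $-\ol{\alpha_{-n-1}}$. So the ``relabeling indices then shows\ldots'' step is where the proof actually lives, and carried out it gives a different answer from what you wrote. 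A complete proof must either do this block-by-block reindexing explicitly and reconcile the sign of the boundary condition with the $1\times1$ block of $\mathcal M_+$, or note that the statement as printed needs $\beta=-1$ and an index shift and prove the corrected version.
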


\begin{proof}
 These are computations.
\end{proof}

We will now consider restrictions to intervals.
So let $a < b$ be integers, and $\beta,\gamma \in
\partial\D$. Define a sequence of Verblunsky
coefficients
\be
 \ti{\alpha}_n = \begin{cases} \beta,&n=a;\\
  \gamma,&n=b;\\
   \alpha_n,& n\notin\{a,b\} \end{cases}.
\ee
We then define the operator
\be
 \mathcal{E}^{[a+1,b]}_{\beta, \gamma}
  = P^{[a+1,b]} \widetilde{\mathcal{E}} P^{[a+1,b]}.
\ee
Of course, this definition makes sense
for $\beta,\gamma \in \ol{\D}$ and $a = - \infty$
or $b = \infty$. Furthermore, we write
$\bullet$ if we leave $\alpha_a$ or $\alpha_b$
unchanged to match the previous definition.
$\beta,\gamma\in\partial\D$ should be thought of
as boundary conditions.

\begin{lemma}
 If $\beta,\gamma\in\partial\D$ then 
 $\mathcal{E}^{[a,b]}_{\beta,\gamma}$,
 $\mathcal{L}^{[a,b]}_{\beta,\gamma}$, and
 $\mathcal{M}^{[a,b]}_{\beta,\gamma}$ are unitary.
\end{lemma}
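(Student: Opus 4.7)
The plan is to exploit the structural consequence of $|\alpha_n|=1$: when $\alpha_n \in \partial\D$, one has $\rho_n = 0$, so the block $\Theta_n$ collapses to the diagonal unitary $\mathrm{diag}(\ol{\alpha_n}, -\alpha_n)$, which severs the coupling between sites $n$ and $n+1$ while remaining unitary. With this in hand, the lemma becomes a statement about invariant subspaces of a unitary operator.

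First I would check that the modified operators $\widetilde{\mathcal{L}}$ and $\widetilde{\mathcal{M}}$ are still unitary on $\ell^2(\Z)$. This is immediate because each block $\widetilde{\Theta}_n$ is a unitary $2\times 2$ matrix (the modified ones included, since a diagonal matrix with unimodular entries is unitary), and both operators are orthogonal direct sums of such blocks. Consequently $\widetilde{\mathcal{E}} = \widetilde{\mathcal{L}}\widetilde{\mathcal{M}}$ is unitary as well.

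The main step is to verify that $\ell^2([a+1, b])$ is invariant under both $\widetilde{\mathcal{L}}$ and $\widetilde{\mathcal{M}}$. Consider the left endpoint; the right endpoint is symmetric. Exactly one of $\widetilde{\mathcal{L}}, \widetilde{\mathcal{M}}$ contains the block $\widetilde{\Theta}_a = \mathrm{diag}(\ol\beta, -\beta)$, determined by the parity of $a$. Because this block is diagonal, it leaves $\ell^2(\{a\})$ and $\ell^2(\{a+1\})$ separately invariant, so it does not carry mass across the cut between $a$ and $a+1$. The complementary operator has its blocks at indices of the opposite parity, so its two blocks adjacent to $a$ are $\Theta_{a-1}$ acting on $\ell^2(\{a-1, a\})$ and $\Theta_{a+1}$ acting on $\ell^2(\{a+1, a+2\})$; neither of these straddles the cut either. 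The same reasoning at $b$ using $\gamma \in \partial\D$ shows the cut between $b$ and $b+1$ is respected. Intersecting, $\ell^2([a+1, b])$ is invariant under $\widetilde{\mathcal{L}}$ and $\widetilde{\mathcal{M}}$ separately, hence also under $\widetilde{\mathcal{E}}$.

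Finally, the restriction of a unitary operator on $\ell^2(\Z)$ to a finite-dimensional invariant subspace is again unitary on that subspace. Applied to $\widetilde{\mathcal{L}}$ and $\widetilde{\mathcal{M}}$, this yields unitarity of $\mathcal{L}^{[a+1,b]}_{\beta,\gamma}$ and $\mathcal{M}^{[a+1,b]}_{\beta,\gamma}$, and then the earlier factorization $\mathcal{E}^{[a+1,b]}_{\beta,\gamma} = \mathcal{L}^{[a+1,b]}_{\beta,\gamma}\mathcal{M}^{[a+1,b]}_{\beta,\gamma}$ gives the result for $\mathcal{E}$ as a product of unitaries. The only delicate point, and really the only obstacle, is the parity bookkeeping in the middle step to confirm that neither $\widetilde{\mathcal{L}}$ nor $\widetilde{\mathcal{M}}$ has a block straddling the cut at $a/a+1$ or at $b/b+1$; a brief case analysis on the parities of $a$ and $b$ handles all four cases by the same mechanism.
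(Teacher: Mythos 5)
The paper states this lemma without proof, so there is no argument of the author's to compare against; but your reasoning is correct and fills the gap in exactly the way the paper implicitly intends. Your argument that $\rho_a = \rho_b = 0$ makes $\widetilde{\Theta}_a$ and $\widetilde{\Theta}_b$ diagonal (hence decoupling the cuts at $a/a{+}1$ and $b/b{+}1$), that $\widetilde{\mathcal{L}}$ and $\widetilde{\mathcal{M}}$ remain unitary as block-diagonal sums of unitary $2\times 2$ blocks, that $\ell^2([a{+}1,b])$ is a finite-dimensional invariant subspace for both, and that an isometry of a finite-dimensional inner-product space into itself is automatically unitary, is precisely the standard and essentially unique route. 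The one place you gesture at rather than verify is the factorization $\mathcal{E}^{[a+1,b]}_{\beta,\gamma}=\mathcal{L}^{[a+1,b]}_{\beta,\gamma}\mathcal{M}^{[a+1,b]}_{\beta,\gamma}$; the paper's earlier factorization lemma is stated only for the unmodified truncations, but once you have invariance of $\ell^2([a{+}1,b])$ under $\widetilde{\mathcal{M}}$, the identity $P\widetilde{\mathcal{L}}\widetilde{\mathcal{M}}P = P\widetilde{\mathcal{L}}P\,P\widetilde{\mathcal{M}}P$ follows since the cross term $P\widetilde{\mathcal{L}}P^{\perp}\widetilde{\mathcal{M}}P$ vanishes, so this is harmless. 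Also worth remarking: the lemma's stated index range $[a,b]$ versus the definition's $[a{+}1,b]$ is a notational mismatch in the paper itself, and you resolved it correctly by working with the interval that is actually invariant.
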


Since the equation $\mathcal{E} \psi = z \psi$
is equivalent to $(z \mathcal{L}^{\ast} - \mathcal{M}) \psi = 0$.
We note for further reference

\begin{lemma}\label{lem:tridiag}
 The matrix $A = z (\mathcal{L}^{[a,b]}_{\beta,\gamma})^{*}
  - \mathcal{M}^{[a,b]}_{\beta,\gamma}$ is tridiagonal.
 Write $A = \{A_{i,j}\}_{a\leq i,j \leq b}$.
 Then we have that
 \be
  A_{j,j} = \begin{cases}
   z \alpha_j + \alpha_{j-1},&j\text{ even}\\
  - z \ol{\alpha_{j-1}}
   - \ol{\alpha_{j}},&j\text{ odd},\end{cases}\quad
  A_{j+1, j} = A_{j,j+1} = \ti{\rho}_j = 
  \begin{cases} z \rho_j,&j\text{ even},\\
   - \rho_{j},&j\text{ odd}.\end{cases}
 \ee
\end{lemma}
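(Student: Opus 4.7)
The plan is to prove the lemma by direct computation, exploiting the block structure of $\mathcal{L}$ and $\mathcal{M}$ to first locate the nonzero entries and then read off the values. The key observation is that both $\mathcal{L}^{[a,b]}_{\beta,\gamma}$ and $\mathcal{M}^{[a,b]}_{\beta,\gamma}$ are block-diagonal: for $\mathcal{L}$ the block containing index $j$ is $\Theta_n$ on $\{n,n+1\}$ with $n=j$ if $j$ is even and $n=j-1$ if $j$ is odd, while for $\mathcal{M}$ the parities are swapped. Therefore at a fixed $j$ the two blocks touching $j$ cover exactly $\{j-1,j,j+1\}$, and any linear combination of $\mathcal{L}^{\ast}$ and $\mathcal{M}$ is automatically tridiagonal. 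The boundary modifications $\alpha_a=\beta$ and $\alpha_b=\gamma$ preserve this block structure, so I only need to verify the formulas for the entries.

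Next I would compute $A_{j,j}$ by splitting on the parity of $j$. For $j$ even the $\mathcal{L}$-block is $\Theta_j$ whose $(j,j)$ entry is $\ol{\alpha_j}$, so the $(j,j)$ entry of $\mathcal{L}^{\ast}$ is $\alpha_j$; the $\mathcal{M}$-block is $\Theta_{j-1}$, whose $(j,j)$ entry is $-\alpha_{j-1}$, yielding $A_{j,j}=z\alpha_j+\alpha_{j-1}$. For $j$ odd the $\mathcal{L}$-block is $\Theta_{j-1}$ with $(j,j)$ entry $-\alpha_{j-1}$, so $\mathcal{L}^{\ast}$ contributes $-\ol{\alpha_{j-1}}$; the $\mathcal{M}$-block is $\Theta_j$ with $(j,j)$ entry $\ol{\alpha_j}$, giving $A_{j,j}=-z\ol{\alpha_{j-1}}-\ol{\alpha_j}$. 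The off-diagonal $A_{j,j+1}$ comes from only one of the two operators, since the other separates $j$ and $j+1$ into different blocks: if $j$ is even, only $\mathcal{L}$ has a $\{j,j+1\}$-block, contributing $z\cdot\ol{\rho_j}=z\rho_j$ (since $\rho_j$ is real); if $j$ is odd, only $\mathcal{M}$ does, contributing $-\rho_j$. Symmetry of each $\Theta_n$ gives the same values for $A_{j+1,j}$.

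There is no genuine obstacle here; the only thing to be careful about is parity bookkeeping and the behavior at the endpoints $j=a$ and $j=b$, where the formulas still hold provided one interprets $\alpha_{a-1}$ and $\alpha_{b}$ via the boundary conventions $\beta,\gamma$ implicit in the definitions of $\mathcal{L}^{[a,b]}_{\beta,\gamma}$ and $\mathcal{M}^{[a,b]}_{\beta,\gamma}$. Thus the proof collapses to noting the block structure, separating cases by parity, and writing down the two relevant matrix entries in each case, which is exactly the one-word proof ``Compute.'' anticipated by the author.
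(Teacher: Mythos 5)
Your proof is correct and is exactly the direct block-by-block computation the paper leaves implicit (the lemma has no written proof; the preceding lemma's proof is the one-word ``Compute'', and this one is clearly intended the same way). The one subtle point — that for $j$ even the relevant $\mathcal{L}$-block is $\Theta_j$ and the $\mathcal{M}$-block is $\Theta_{j-1}$, with parities swapped for $j$ odd, so that the two blocks touching $j$ together cover $\{j-1,j,j+1\}$ and force tridiagonality — is handled correctly, as is the observation that $\rho_j$ being real means the off-diagonal entries of $\Theta_j^{\ast}$ equal those of $\Theta_j$. The endpoint convention you note (interpreting $\alpha_{a-1}$ as $\beta$ and $\alpha_b$ as $\gamma$) matches the paper's definition of the boundary-modified $\widetilde{\alpha}$ sequence, and with $\gamma\in\partial\D$ one has $\rho_b=0$, which is consistent with the truncation. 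No gaps.
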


Let $z\in\C$, $\beta,\gamma\in\partial{D}$,
$a\leq k,\ell\leq b$, then the Green's function is 
defined by
\be
 G^{[a,b]}_{\beta,\gamma}(z; k,\ell)=
  \spr{\delta_k}{(z \left(\mathcal{L}^{[a,b]}_{\beta,\gamma})^{*}
   - \mathcal{M}^{[a,b]}_{\beta,\gamma}\right)^{-1} \delta_\ell}.
\ee
Our goal now will be to provide a formula
for the Green's function in terms of quantities
that are easier to analyze, like the formula
for the Green's function of Schr\"odinger operators
in term of orthogonal polynomials, respectively
entries of the transfer matrix.

We define
\begin{align}
 \Phi^{[a,b]}_{\beta,\gamma}(z) 
 &= \det\left(z - \mathcal{E}^{[a,b]}_{\beta,\gamma}\right) \\
\nn &= \det\left(
  z (\mathcal{L}^{[a,b]}_{\beta,\gamma} )^*
 - \mathcal{M}^{[a,b]}_{\beta,\gamma}
   \right) \cdot \det((\mathcal{L}^{[a,b]}_{\beta,\gamma} )^*)
\end{align}
and
\be
 \varphi^{[a,b]}_{\beta,\gamma}(z) = (\rho_a \cdots \rho_{b})^{-1}
  \Phi^{[a,b]}_{\beta,\gamma}(z).
\ee

\begin{lemma}
 Let $\Phi_n(z)$ be defined as in \eqref{eq:defPhin}.
 Then
 \be
  \Phi_{n}(z) = \Phi^{[0,n-1]}_{1,\bullet}(z).
 \ee
\end{lemma}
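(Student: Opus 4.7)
The approach is a straightforward induction on $n$, but because the Szeg\H{o} recursion \eqref{eq:defPhin} is first-order in the \emph{pair} $(\Phi_n, \Phi_n^*)$ rather than a three-term recursion in $\Phi_n$ alone, the inductive hypothesis must track both objects simultaneously. The natural companion for $\Phi_n^*(z)$ is a second determinant built from the same restricted CMV operator — for instance, a specific cofactor of $A_n := z(\mathcal{L}^{[0,n-1]}_{1,\bullet})^* - \mathcal{M}^{[0,n-1]}_{1,\bullet}$, or the analogous characteristic polynomial with the left boundary condition $\beta = 1$ replaced by the complementary choice. I would install this companion in the induction from the start.

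For the base cases, $n = 0$ is immediate: $\mathcal{E}^{[0,-1]}_{1,\bullet}$ is the empty operator, so $\Phi^{[0,-1]}_{1,\bullet}(z) = 1 = \Phi_0(z)$. The case $n = 1$ is a one-line direct computation, using that the boundary choice $\beta = 1$ amounts to setting $\alpha_{-1} = 1$, so that the entries in Lemma~\ref{lem:tridiag} collapse to give $\Phi^{[0,0]}_{1,\bullet}(z) = z - \overline{\alpha_0} = \Phi_1(z)$.

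For the inductive step, I would apply Lemma~\ref{lem:tridiag} to write $A_{n+1}$ explicitly as a tridiagonal matrix and expand $\det(A_{n+1})$ along its last row or column. This produces the three-term recursion
\[
\det(A_{n+1}) = A_{n,n}\det(A_n) - \tilde\rho_{n-1}^2 \det(A_{n-1}),
\]
with $A_{n,n}$ a linear combination of $z\alpha_n$ and $\alpha_{n-1}$ (or their conjugates, depending on parity) and $\tilde\rho_{n-1}^2 = \pm z\rho_{n-1}^2$. After multiplying through by the factor $\det((\mathcal{L}^{[0,n]}_{1,\bullet})^*)$ to pass from $\det(A_{n+1})$ to $\Phi^{[0,n]}_{1,\bullet}$, the identity $\rho_j^2 = 1 - |\alpha_j|^2$ lets the subdiagonal contribution combine with part of $A_{n,n}$ and reorganize precisely into the mixture $z\,\Phi^{[0,n-1]}_{1,\bullet} - \overline{\alpha_n}\cdot(\text{companion})$ demanded by \eqref{eq:defPhin}, with the companion term playing the role of $\Phi_n^*$.

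The main obstacle will be the bookkeeping: keeping track of parities (Lemma~\ref{lem:tridiag} distinguishes even and odd $j$), signs inside $\det((\mathcal{L}^{[0,m]}_{1,\bullet})^*) = \prod_j \overline{(-\alpha_j)}$ for varying $m$, and powers of $z$ produced by the split $A = z\mathcal{L}^* - \mathcal{M}$. The reason this is tractable at all is exactly the decomposition $\mathcal{E} = \mathcal{L}\mathcal{M}$: working with the tridiagonal $z\mathcal{L}^* - \mathcal{M}$ instead of the five-diagonal $\mathcal{E}$ itself reduces the identification of the characteristic polynomial to a standard tridiagonal cofactor expansion, after which matching with \eqref{eq:defPhin} is just algebra.
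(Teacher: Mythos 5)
Your approach is genuinely different from the paper's. The paper's entire proof is a one-line citation: it invokes Proposition~3.4 of Simon's \emph{CMV matrices: Five years after}, which already states $\Phi_n(z) = \det(z - \mathcal{E}^{[0,n-1]}_{1,\bullet})$, and then the claim is nothing but the paper's definition of $\Phi^{[a,b]}_{\beta,\gamma}$. You instead propose to reprove that identity from scratch by a tridiagonal cofactor expansion against $z\mathcal{L}^* - \mathcal{M}$, carrying a companion object to track $\Phi_n^*$. This is a reasonable plan and would, if executed, give a self-contained proof where the paper simply outsources it; the structure (two-term induction, three-term determinant recursion, reorganization via $\rho_j^2 = 1-|\alpha_j|^2$) is how the cited result is proved in the first place.

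That said, two details in your sketch do not survive scrutiny as written, and both matter. First, your $n=1$ base case: computing directly, with $\alpha_{-1}=\beta=1$ one finds $\Theta_{-1} = \left(\begin{smallmatrix} 1 & 0 \\ 0 & -1\end{smallmatrix}\right)$, hence $\mathcal{M}(0,0)=-1$ and $\mathcal{E}^{[0,0]}_{1,\bullet} = -\overline{\alpha_0}$, which gives $\det(z - \mathcal{E}^{[0,0]}_{1,\bullet}) = z + \overline{\alpha_0}$, \emph{not} $z - \overline{\alpha_0}=\Phi_1(z)$. The match to $\Phi_1$ actually occurs for $\beta=-1$ (the usual decoupling condition, since $\mathcal{M}_+(0,0)=1=-\beta$ forces $\beta=-1$); the statement's $\beta=1$ is presumably a sign slip in the source, but your ``one-line computation'' asserts an answer that the stated conventions do not produce, so you would hit an inconsistency on the very first step. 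Second, the passage from $\det(z\mathcal{L}^*-\mathcal{M})$ to $\det(z-\mathcal{E})$ by ``multiplying through by $\det((\mathcal{L}^{[0,n]}_{1,\bullet})^*)$'' is only valid when $\mathcal{L}^{[0,n]}_{1,\bullet}$ is unitary; with a $\bullet$ right boundary and $n$ odd, the last $\Theta$-block is truncated and $\mathcal{L}^{[0,n]}$ is not unitary, so the correct cofactor is $\det(\mathcal{L}^{-1})$, not $\det(\mathcal{L}^*)$. You would need to either keep track of this distinction or arrange the induction so that only unitary truncations of $\mathcal{L}$ appear. Neither issue is fatal to the strategy, but both need to be addressed for the induction to close.
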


\begin{proof}
 Proposition~3.4. in \cite{scmv5} states 
 \[
  \Phi_n(z) = \det(z - \mathcal{E}^{[0,n-1]}_{1, \bullet}).
 \]
 The claim follows.
\end{proof}

We also introduce the Aleksandrov polynomials $\Phi^{\beta}_n(z)$
by applying the recursion \eqref{eq:defPhin} to
the Verblunsky coefficients $\{\beta \alpha_n\}_{n=0}^{\infty}$.
In particular, the polynomial of the second
kind is defined by
\be
 \Psi_n(z) = \Phi^{-1}_n(z).
\ee
We have that (Theorem~9.5. in \cite{s1foot})

\begin{lemma}
 We have
 \be
  \Phi^{\beta}_n(z) = \Phi_{\beta,\bullet}^{[0,n-1]}(z)
 \ee
 and
 \be
  \Phi^{\beta}_n(z;\gamma) = 
   \Phi_{\beta,\gamma}^{[0,n-1]}(z)
 \ee
\end{lemma}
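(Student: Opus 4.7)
The plan is to prove both identities by induction on $n$, showing that the two sides satisfy a common recursion with matching initial data.

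For the base case, $\Phi^\beta_0 = 1$ corresponds to the empty determinant, and the $n=1$ case can be checked by evaluating the scalar $\mathcal{E}^{[0,0]}_{\beta,\bullet}$ directly from the definitions of $\tilde{\mathcal{L}}$ and $\tilde{\mathcal{M}}$ with $\tilde\alpha_{-1}=\beta$. For the inductive step on the left-hand side, $\Phi^\beta_n$ is by construction the output of the Szeg\H{o} recursion with coefficients $\{\beta\alpha_j\}$, so $\Phi^\beta_{n+1}(z) = z\Phi^\beta_n(z) - \overline{\beta\alpha_n}(\Phi^\beta_n)^*(z)$. Combining this with the companion recursion $(\Phi^\beta_{n+1})^*(z) = (\Phi^\beta_n)^*(z) - \beta\alpha_n z \Phi^\beta_n(z)$ and eliminating the reversed polynomials produces a purely three-term linear recursion for $\Phi^\beta_n$ alone, with coefficients depending on $z$, $\alpha_n$, and $\alpha_{n-1}$.

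For the right-hand side, the preceding Lemma on tridiagonal structure identifies $\Phi^{[0,n-1]}_{\beta,\bullet}(z)$, up to the factor $\det((\mathcal{L}^{[0,n-1]}_{\beta,\bullet})^*)=\pm 1$, with the determinant of an explicit tridiagonal matrix. Expanding along the last row yields a three-term recursion of the form $D_n = A_{n-1,n-1} D_{n-1} - A_{n-2,n-1} A_{n-1,n-2} D_{n-2}$, where $D_n$ denotes the tridiagonal determinant for parameter range $[0,n-1]$ and the entries $A_{i,j}$ are given by the Lemma. Matching coefficients against the Szeg\H{o}-derived three-term recursion, while tracking the alternating $z$ and sign factors from the even/odd distinction in the tridiagonal entries, yields the first identity.

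For the second identity, the difference between $\Phi^\beta_n(z;\gamma)$ and $\Phi^\beta_n(z)$ is entirely confined to the final step of the recursion, where $\overline{\beta\alpha_{n-1}}$ is replaced by $\bar\gamma$. On the determinant side, replacing the right boundary $\bullet$ with $\gamma$ in $\Phi^{[0,n-1]}_{\beta,\gamma}$ modifies only the bottom-right entries of the tridiagonal matrix. Using the first identity for the truncation to $[0,n-2]$ and expanding the $[0,n-1]$ determinant along the last row isolates these modifications and shows they correspond exactly to the substitution $\overline{\beta\alpha_{n-1}}\mapsto\bar\gamma$ in the Szeg\H{o} recursion.

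The main obstacle is bookkeeping: the even/odd split in the tridiagonal entries produces asymmetric $z$-factors ($z\rho_j$ versus $-\rho_j$) and mixes $\alpha_j$ with $\bar\alpha_j$ on the diagonal, and these must combine with the companion recursion for $\Phi^*$ and the overall sign coming from $\det((\mathcal{L}^{[0,n-1]}_{\beta,\bullet})^*)$ to reproduce the monic Szeg\H{o} recursion on the nose rather than up to a scalar.
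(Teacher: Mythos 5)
The paper does not prove this lemma at all; it is quoted directly from Theorem~9.5 of Simon's ``OPUC on one foot'' \cite{s1foot}, in the same way the preceding lemma ($\Phi_n = \Phi^{[0,n-1]}_{1,\bullet}$) is quoted from \cite{scmv5}. So your proposed inductive/determinantal proof is a genuinely different route: you would be supplying the computation that the paper delegates to the literature. In principle this is a reasonable thing to attempt, and your main idea --- derive a three-term recursion for $\Phi^\beta_n$ by eliminating the reversed polynomials, and match it against the Laplace expansion of the tridiagonal determinant from the Lemma on $z(\mathcal{L}^{[a,b]})^*-\mathcal{M}^{[a,b]}$ --- is the standard way one would verify such an identity by hand. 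You also implicitly exploit a key simplification worth stating outright: since $|\beta|=1$, the eliminated recursion for $\Phi^\beta_n$ has coefficients depending only on $\alpha_{n-1},\alpha_n,z$ (the $\beta$ cancels in $\overline{\beta\alpha_n}/\overline{\beta\alpha_{n-1}}$ and $\rho(\beta\alpha_j)=\rho(\alpha_j)$), so all the $\beta$-dependence lives in the initial data. This is the reason a single recursion matching can serve every $\beta$ at once.

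However, there is a concrete error that will derail the coefficient matching. You assert that $\det\bigl((\mathcal{L}^{[0,n-1]}_{\beta,\bullet})^*\bigr)=\pm 1$. This is false whenever $n$ is odd. Indeed $\mathcal{L}^{[0,n-1]}_{\beta,\bullet}$ is the restriction of $\bigoplus_{j\text{ even}}\Theta_j$ to $\ell^2(\{0,\dots,n-1\})$ (the left boundary condition $\beta$ lives in $\mathcal{M}$, not $\mathcal{L}$, and the right boundary is $\bullet$, so $\alpha_{n-1}$ is unchanged). For $n$ even this is a direct sum of full $\Theta$-blocks and the determinant is $(-1)^{n/2}$; but for $n$ odd the block $\Theta_{n-1}$ is truncated to its $(n-1,n-1)$ entry $\ol{\alpha_{n-1}}$, so
\[
 \det\bigl(\mathcal{L}^{[0,n-1]}_{\beta,\bullet}\bigr) = (-1)^{(n-1)/2}\,\ol{\alpha_{n-1}},
 \qquad
 \det\bigl((\mathcal{L}^{[0,n-1]}_{\beta,\bullet})^*\bigr) = (-1)^{(n-1)/2}\,\alpha_{n-1},
\]
which has modulus $|\alpha_{n-1}|<1$. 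This $n$-dependent, $\alpha_{n-1}$-proportional prefactor is not a nuisance you can wave away: it is precisely what reconciles the eliminated Szeg\H{o} recursion (whose coefficients involve $\ol{\alpha_n}/\ol{\alpha_{n-1}}$ and $\rho_{n-1}^2$) with the raw tridiagonal expansion $D_n = A_{n-1,n-1}D_{n-1} - A_{n-1,n-2}^2 D_{n-2}$, whose coefficients from the paper's tridiagonal Lemma do not contain any $1/\ol{\alpha_{n-1}}$. If you treat the prefactor as $\pm1$, the two recursions will not match and the induction will fail. Relatedly, the elimination step itself requires dividing by $\ol{\alpha_{n-1}}$; this is harmless for the paper's Verblunsky coefficients (which have constant nonzero modulus $|\lambda|$) but the lemma is stated for general $\alpha$, so you should either note this restriction or argue by continuity/polynomial identity. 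I would recommend fixing the $\det(\mathcal{L}^*)$ bookkeeping first; once the correct alternating factor $\pm\alpha_{n-1}$ is in place, the rest of your plan goes through.
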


With these formulas, we obtain the following equality
for the absolute value of the Green's function. It would
be possible to derive an equality for the Green's function
but one would need distinguish between 4 cases depending
on if $a$ or $b$ is even or odd.

\begin{proposition}\label{prop:greenaspolynomial}
 Let $z\in\C$, $\beta,\gamma\in\partial\D$,
 and $a \leq k \leq \ell \leq b$. Then
 \be
  \left| G^{[a, b]}_{\beta,\gamma} (z; k, \ell)\right|
   = \frac{1}{\rho_k \rho_{\ell}}
   \left| \frac{\varphi_{\beta, \bullet}^{[a,k-1]}(z)
    \varphi_{\bullet, \gamma}^{[\ell+1, b]}(z)}
  {\varphi_{\beta,\gamma}^{[a,b]}(z)}\right|
 \ee
\end{proposition}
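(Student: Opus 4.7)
The formula is the CMV analogue of the standard Jacobi Green's-function identity, and I would prove it by the usual three-step recipe applied to the tridiagonal matrix $A = z(\mathcal{L}^{[a,b]}_{\beta,\gamma})^* - \mathcal{M}^{[a,b]}_{\beta,\gamma}$, which is tridiagonal by Lemma~\ref{lem:tridiag}.

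Since $G^{[a,b]}_{\beta,\gamma}(z;k,\ell) = (A^{-1})_{k,\ell}$, Cramer's rule gives
\[
(A^{-1})_{k,\ell} = \frac{(-1)^{k+\ell}\det B_{k,\ell}}{\det A^{[a,b]}},
\]
where $B_{k,\ell}$ denotes $A^{[a,b]}$ with row $\ell$ and column $k$ deleted. For $k \leq \ell$, after this deletion $B_{k,\ell}$ decomposes into three non-interacting pieces --- a top-left block $A^{[a,k-1]}$, a middle diagonal strip whose surviving entries are the upper off-diagonals $\tilde\rho_k,\ldots,\tilde\rho_{\ell-1}$, and a bottom-right block $A^{[\ell+1,b]}$ --- so that
\[
\det B_{k,\ell} \;=\; \det A^{[a,k-1]}\cdot \prod_{j=k}^{\ell-1}\tilde\rho_j \cdot \det A^{[\ell+1,b]}.
\]

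Feeding this back into Cramer's formula, passing to absolute values (which absorbs the sign $(-1)^{k+\ell}$), and rewriting each $\det A^{[c,d]}$ in terms of $|\Phi^{[c,d]}_{\beta',\gamma'}|$ via the identity $\Phi = \det(zL^*-M)\cdot\det(L^*)$ recorded in the excerpt --- with boundary data $(\beta,\bullet)$ on $[a,k-1]$, $(\bullet,\gamma)$ on $[\ell+1,b]$, and $(\beta,\gamma)$ on $[a,b]$ --- puts the right-hand side into $|\Phi|$-form. Since $\beta,\gamma\in\partial\D$ makes the full $\mathcal{L}^{[a,b]}_{\beta,\gamma}$ unitary, $|\det\mathcal{L}^{[a,b]}_{\beta,\gamma}|=1$ kills the corresponding factor in the denominator. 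Finally, converting $\Phi$ to $\varphi$ via $\varphi = (\rho_c\cdots\rho_d)^{-1}\Phi$, the $\rho$-products from the three $\varphi$'s telescope against the $\prod_{j=k}^{\ell-1}\rho_j$ coming from $|\tilde\rho_j|$ and collapse exactly to the prefactor $1/(\rho_k\rho_\ell)$ in the statement.

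The hard part will be the last piece of bookkeeping: when the sub-interval $[a,k-1]$ or $[\ell+1,b]$ truncates a CMV block at the inner $\bullet$-endpoint, the associated $\mathcal{L}$ is non-unitary and $|\det\mathcal{L}|$ need not equal $1$, so one must check that these residual factors combine correctly with the powers of $|z|$ coming from the even off-diagonals $\tilde\rho_j = z\rho_j$. This is also why the proposition is stated only as an equality of absolute values rather than as a pointwise Green's function identity.
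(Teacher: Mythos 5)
Your approach is the same as the paper's: Cramer's rule for the tridiagonal matrix $A = z(\mathcal{L}^{[a,b]}_{\beta,\gamma})^* - \mathcal{M}^{[a,b]}_{\beta,\gamma}$, block-triangular factorization of the minor $B_{k,\ell}$ into $A^{[a,k-1]}$, an off-diagonal product, and $A^{[\ell+1,b]}$, followed by the passage $\det A \leadsto \Phi \leadsto \varphi$. The paper's proof is a one-liner that records the intermediate identity
\[
|G^{[a,b]}_{\beta,\gamma}(z;k,\ell)| = \ti\rho_{k+1}\cdots\ti\rho_{\ell-1}\left|\frac{\Phi^{[a,k-1]}_{\beta,\bullet}\Phi^{[\ell+1,b]}_{\bullet,\gamma}}{\Phi^{[a,b]}_{\beta,\gamma}}\right|
\]
and then appeals to the definition of $\varphi$.

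Two remarks on where you are more careful than the one-line proof. First, your index range $\prod_{j=k}^{\ell-1}\ti\rho_j$ is the correct one for $\det B_{k,\ell}$: deleting row $\ell$ and column $k$ from a tridiagonal matrix leaves a lower block-triangular array whose middle block has diagonal $A_{k,k+1},\ldots,A_{\ell-1,\ell}$, i.e.\ $\ti\rho_k,\ldots,\ti\rho_{\ell-1}$ (check it on $a=0,b=2,k=0,\ell=2$, where the minor is $\begin{pmatrix}A_{01}&0\\A_{11}&A_{12}\end{pmatrix}$ after deletion, giving $\ti\rho_0\ti\rho_1$). The paper's display starting at $k+1$ is an off-by-one that gets absorbed into the uniformly bounded constant $C$ in its subsequent Green's-function estimate, which is all the paper ever uses, but as an identity your bookkeeping of the minor is the right one. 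Second, the issue you flag at the end is genuinely the crux, and here is how it resolves: for an interval $[c,d]$ with one $\bullet$-endpoint, \emph{exactly one} of $\mathcal{L}^{[c,d]}$ and $\mathcal{M}^{[c,d]}$ is truncated non-unitarily (they alternate parity on the endpoints), and the other is unitary. If $\mathcal{L}$ is unitary, $\mathcal{L}^*(z-\mathcal{E}) = z\mathcal{L}^*-\mathcal{M}$ gives $|\Phi| = |\det A|$ directly; if instead $\mathcal{M}$ is unitary, $(z-\mathcal{E})\mathcal{M}^* = z\mathcal{M}^*-\mathcal{L}$ gives $|\Phi| = |\det(z\mathcal{M}^*-\mathcal{L})|$, and for $|z|=1$ conjugation shows $|\det(z\mathcal{M}^*-\mathcal{L})| = |\det(z\mathcal{L}^*-\mathcal{M})| = |\det A|$. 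This is exactly why the identity is stated only for absolute values (it is tacitly for $z$ on the unit circle), as you correctly guessed. So your proposal is a correct and slightly more scrupulous version of the paper's proof; the only thing left to do is the one-paragraph parity/unitarity check above, which you anticipated but did not carry out.
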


\begin{proof}
 By Cramer's rule and Lemma~\ref{lem:tridiag},
 we thus obtain
 \[
  \left|G^{[a, b]}_{\beta,\gamma} (z; k, \ell)\right|
   = \ti{\rho}_{k+1} \cdots \ti{\rho}_{\ell - 1}
    \left|\frac{\Phi_{\beta, \bullet}^{[a,k-1]}(z)
    \Phi_{\bullet, \gamma}^{[\ell+1, b]}(z)}
  {\Phi_{\beta,\gamma}^{[a,b]}(z)}\right|
 \]
 The claim now follows from the definition of $\varphi$.
\end{proof}

This formula is more awkward than the one for
Schr\"odinger operators, since it involves
three different type of polynomials whereas the
one for Schr\"odinger operators only has one
(see (2.7) in \cite{bbook}). Nevertheless it
is useful in exactly the same way.
We now give the relation of the Green's function
to solution of our equation.

\begin{lemma}\label{lem:green2sol}
 Let $\psi$ solve $\mathcal{E} \psi = z \psi$. Then
 for $a < n < b$
 \begin{align}
  \psi(n) & = G^{[a,b]}_{\beta,\gamma}(z;n,a)
   \begin{cases} (z \ol{\beta} - \alpha_a) \psi(a)
    -\rho_a \psi(a+1),& a\text{ even};\\
   (z \alpha_a - \beta) \psi(a) + z \rho_a \psi(a+1),&a\text{ odd}
    \end{cases}\\
\nn  &+ G^{[a,b]}_{\beta,\gamma}(z;n, b)
   \begin{cases} (z \ol{\gamma} - \alpha_b) \psi(b)
    -\rho_b \psi(b-1),& b\text{ even};\\
   (z \alpha_b - \gamma) \psi(b) 
    + z \rho_{b-1} \psi(b-1),&b\text{ odd}
    \end{cases}
 \end{align}
\end{lemma}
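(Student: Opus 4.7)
The statement is a Poisson-type formula recovering an interior value of a solution of $\mathcal{E}\psi=z\psi$ from the two boundary-adjacent values at each endpoint of $[a,b]$. I would prove it by the usual Green's-function source-term decomposition.

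\emph{Setup.} I would first use Lemma~\ref{lem:tridiag} to rewrite the eigenvalue equation as $A\psi=0$ for the tridiagonal operator $A:=z\mathcal{L}^{*}-\mathcal{M}$, set $\tilde\psi:=\psi|_{[a,b]}$, and examine the source $S:=A^{[a,b]}_{\beta,\gamma}\tilde\psi$. The crucial observation is that $A^{[a,b]}_{\beta,\gamma}$ differs from the plain truncation of $A$ to $[a,b]$ only in the two boundary rows: in row $a$ the outward-pointing entry $A_{a,a-1}=\tilde\rho_{a-1}$ is dropped and the diagonal entry $A_{a,a}$ uses the modified $\alpha_{a-1}=\beta$; the analogous thing happens at row $b$ using $\alpha_b=\gamma$. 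At every interior row $a<j<b$, the entries of $A^{[a,b]}_{\beta,\gamma}$ coincide with those of $A$ (and neither $\beta$ nor $\gamma$ enters any $\rho_\ell$ that appears there). Hence $S(j)=(A\psi)(j)=0$ for all interior $j$, and $S$ is supported in $\{a,b\}$.

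\emph{Boundary sources.} I would then read off $S(a)$ from the boundary row using Lemma~\ref{lem:tridiag}, obtaining a linear combination of $\psi(a)$ and $\psi(a+1)$ with explicit coefficients in $\beta$, $\alpha_a$, and $\rho_a$. The equation $(A\psi)(a)=0$ in the full infinite operator then gives a second identity linking $\psi(a-1)$, $\psi(a)$, and $\psi(a+1)$, which one uses to rewrite $S(a)$ in the normalized form appearing in the lemma; carrying this through separately for $a$ even and for $a$ odd produces the two coefficient expressions $(z\overline\beta-\alpha_a)\psi(a)-\rho_a\psi(a+1)$ and $(z\alpha_a-\beta)\psi(a)+z\rho_a\psi(a+1)$, after multiplying through by the appropriate unimodular factor so that $\beta$ appears conjugated in the first case. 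An entirely analogous calculation at row $b$, using $(A\psi)(b)=0$ to eliminate $\psi(b+1)$, yields the two expressions shown for the $\gamma$-source.

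\emph{Inversion.} Since $A^{[a,b]}_{\beta,\gamma}$ is invertible for generic $z$ (its inverse is what defines $G^{[a,b]}_{\beta,\gamma}$), applying the inverse gives $\tilde\psi=(A^{[a,b]}_{\beta,\gamma})^{-1}S$, which in coordinates reads
\[
\psi(n)=\sum_{\ell\in[a,b]}G^{[a,b]}_{\beta,\gamma}(z;n,\ell)\,S(\ell)=G^{[a,b]}_{\beta,\gamma}(z;n,a)\,S(a)+G^{[a,b]}_{\beta,\gamma}(z;n,b)\,S(b),
\]
because $S$ is supported in $\{a,b\}$. Substituting the explicit forms of $S(a)$ and $S(b)$ then yields the claimed identity.

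\emph{Main obstacle.} The conceptual content is essentially one line --- expand $\tilde\psi$ against $A^{[a,b]}_{\beta,\gamma}$ and pick up the boundary mismatch --- but the execution requires the four parity combinations for $(a,b)$ to be handled separately. The complex-conjugate asymmetry between $A_{j,j}=z\alpha_j+\alpha_{j-1}$ (even $j$) and $A_{j,j}=-z\overline{\alpha_{j-1}}-\overline{\alpha_j}$ (odd $j$), together with the sign conventions $\tilde\rho_j=z\rho_j$ versus $-\rho_j$, makes it easy to misplace factors. I expect the only real work to be the patient bookkeeping needed to arrange that $\beta$ appears as $\overline\beta$ and that $\rho_a\psi(a+1)$ appears with the stated coefficient (rather than a harmless multiple of it that would be absorbed into the Green's function).
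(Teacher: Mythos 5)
Your approach is exactly the paper's: the proof there is precisely your ``Setup'' and ``Inversion'' steps --- write $\psi(n) = \sum_{\ell} G^{[a,b]}_{\beta,\gamma}(z;n,\ell)\,\bigl(A^{[a,b]}_{\beta,\gamma}\psi|_{[a,b]}\bigr)(\ell)$ with $A^{[a,b]}_{\beta,\gamma} = z(\mathcal{L}^{[a,b]}_{\beta,\gamma})^{\ast} - \mathcal{M}^{[a,b]}_{\beta,\gamma}$, observe the source vanishes on interior rows since those coincide with the unrestricted tridiagonal operator, and read off rows $a$ and $b$ from Lemma~\ref{lem:tridiag}.

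One part of your ``Boundary sources'' paragraph is, however, not a legitimate move and would need to be repaired if you wrote this out in full. The boundary coefficient multiplying $G^{[a,b]}_{\beta,\gamma}(z;n,a)$ must be literally $\bigl(A^{[a,b]}_{\beta,\gamma}\psi|_{[a,b]}\bigr)(a)$, i.e.\ the two nonvanishing entries of row $a$ (which involve only $\psi(a)$ and $\psi(a+1)$) applied to $\psi$; there is no freedom to ``multiply through by the appropriate unimodular factor,'' because $G^{[a,b]}_{\beta,\gamma}(z;n,a)$ is already fixed by its definition and rescaling $S(a)$ independently of $S(b)$ destroys the identity $\psi(n)=G(n,a)S(a)+G(n,b)S(b)$. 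Likewise, appealing to the full-line equation $(A\psi)(a)=0$ to ``rewrite $S(a)$'' does not help --- it would only trade $\psi(a+1)$ for $\psi(a-1)$, which does not appear in the statement. So if your raw row-$a$ entry (from Lemma~\ref{lem:tridiag}, with $\beta$ substituted for $\alpha_{a-1}$, since the interval is $[a,b]$) does not already equal the printed coefficient, that is a discrepancy you must resolve head-on by re-examining the conventions, not absorb into a phase.
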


\begin{proof}
 With $A = (z(\mathcal{L}^{[a,b]}_{\beta,\gamma})^{\ast} - 
    \mathcal{M}^{[a,b]}_{\beta,\gamma})$, we have
 \[
  \varphi(n) = \spr{A^{-1} \delta_n, A \varphi}.
 \]
 Since, $(z(\mathcal{L})^{\ast} - \mathcal{M} )\varphi=0$,
 we have that for $a+1 \leq n \leq b$ also
 \[
  A \varphi (n) = 0.
 \]
 The claim now follows by evaluating this
 expression for $n \in \{a,b\}$.
\end{proof}

Our next goal will be to introduce transfer
matrices and related them to the determinants
defined above. We begin with the one-step
transfer matrix
\be
 A_z(\alpha) = \frac{1}{(1-|\alpha|^2)^{\frac{1}{2}}}
  \begin{pmatrix} z & - \ol{\alpha} \\
  -\alpha z & 1 \end{pmatrix}.
\ee
We define the transfer matrix by
\be
 T^{[a,b]}(z) = A_z(\alpha_{b}) \cdots A_z(\alpha_a).
\ee

\begin{lemma}
 We have that
 \be
  T^{[a,b]}(z) = \frac{1}{2} \begin{pmatrix}
   \varphi^{[a,b]}_{1,\bullet}(z) + 
    \varphi^{[a,b]}_{-1,\bullet}(z) & 
   \varphi^{[a,b]}_{1,\bullet}(z) -  
    \varphi^{[a,b]}_{-1,\bullet}(z) \\
   (\varphi^{[a,b]}_{1,\bullet})^*(z) - 
    (\varphi^{[a,b]}_{-1,\bullet})^*(z) &  
   (\varphi^{[a,b]}_{1,\bullet})^*(z)
    + (\varphi^{[a,b]}_{-1,\bullet})^*(z)\end{pmatrix}.
 \ee
 where $(\varphi^{[a,b]}_{\beta,\gamma})^*(z)
  = z^{b-a+1} \ol{\varphi^{[a,b]}_{\beta,\gamma}(\ol{z}^{-1})}$.
\end{lemma}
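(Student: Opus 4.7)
The plan is to reduce the $2 \times 2$ matrix identity to two column identities and verify each by interpreting the transfer matrix product as the normalized Szeg\H{o} recursion. By linearity, writing $e_1 = \tfrac{1}{2}((1,1)^T + (1,-1)^T)$ and $e_2 = \tfrac{1}{2}((1,1)^T - (1,-1)^T)$, the asserted formula is equivalent to
\begin{align*}
 T^{[a,b]}(z) \begin{pmatrix} 1 \\ 1 \end{pmatrix} &= \begin{pmatrix} \varphi^{[a,b]}_{1,\bullet}(z) \\ (\varphi^{[a,b]}_{1,\bullet})^{\ast}(z) \end{pmatrix}, \\
 T^{[a,b]}(z) \begin{pmatrix} 1 \\ -1 \end{pmatrix} &= \begin{pmatrix} \varphi^{[a,b]}_{-1,\bullet}(z) \\ -(\varphi^{[a,b]}_{-1,\bullet})^{\ast}(z) \end{pmatrix}.
\end{align*}

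For the first identity, I would observe that $A_z(\alpha)$ encodes exactly one normalized Szeg\H{o} step: with $\Phi_n, \Phi_n^{\ast}$ defined by \eqref{eq:defPhin} starting from $\Phi_a = 1$ and using Verblunsky data $\alpha_a, \dots, \alpha_b$, the normalized quantities $\varphi_n = \Phi_n/(\rho_a \cdots \rho_{n-1})$ satisfy $(\varphi_{n+1}, \varphi_{n+1}^{\ast})^T = A_z(\alpha_n)(\varphi_n, \varphi_n^{\ast})^T$. Iterating from $n=a$ with $\varphi_a = \varphi_a^{\ast} = 1$ up to $n=b$ thus yields $T^{[a,b]}(z)(1,1)^T = (\varphi_{b+1}, \varphi_{b+1}^{\ast})^T$, and translating indices $n \mapsto n+a$ identifies $\Phi_{b+1}$ with the standard OPUC polynomial for the shifted Verblunsky sequence $\{\alpha_{n+a}\}_{n \geq 0}$. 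Applying $\Phi_n = \Phi^{[0, n-1]}_{1, \bullet}$ to this shifted sequence, together with the normalization $\varphi_{b+1} = \Phi_{b+1}/(\rho_a \cdots \rho_b)$, gives the first identity.

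For the second identity, I would use the conjugation $\sigma_{3} A_z(\alpha) \sigma_{3} = A_z(-\alpha)$ with $\sigma_{3} = \mathrm{diag}(1,-1)$. Telescoping the product gives $T^{[a,b]}(z)(1,-1)^T = \sigma_{3} \widetilde T^{[a,b]}(z)(1,1)^T$, where $\widetilde T^{[a,b]}$ is the transfer matrix built from the sign-flipped data $\{-\alpha_n\}$. The first identity applied to $\widetilde T^{[a,b]}$ produces the normalized Szeg\H{o} output for $\{-\alpha_{n+a}\}_{n \geq 0}$, which by definition is the Aleksandrov polynomial $\Phi^{-1}_{b-a+1}$ of the shifted sequence; the Aleksandrov identification $\Phi^{\beta}_n = \Phi^{[0,n-1]}_{\beta,\bullet}$ at $\beta=-1$ rewrites this as $\varphi^{[a,b]}_{-1,\bullet}$, and the left multiplication by $\sigma_{3}$ yields the sign flip in the lower entry.

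The crux is the pair of identifications $\Phi_n = \Phi^{[0,n-1]}_{1,\bullet}$ and $\Phi^{\beta}_n = \Phi^{[0,n-1]}_{\beta,\bullet}$, which encode the nontrivial fact that imposing the left boundary $\widetilde\alpha_{a-1} = \beta$ in the restricted CMV determinant has the same effect as the global Aleksandrov substitution $\alpha_n \mapsto \beta\alpha_n$. Both are already available from the preceding lemmas (citing \cite{scmv5} and \cite{s1foot}), so modulo these the proof is just the linear-algebraic calculation above together with the $\sigma_{3}$-conjugation bookkeeping; I do not foresee a substantive obstacle beyond checking the normalization constants match.
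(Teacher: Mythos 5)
Your proposal is correct, and it takes a more self-contained route than the paper. The paper simply cites the formula from Simon's ``OPUC on one foot'' expressing $T_n(z)$ in terms of $\varphi_n,\psi_n$ and their reverses, identifies $\psi_n = \Phi^{-1}_n$ with $\Phi^{[0,n-1]}_{-1,\bullet}$ via the preceding lemma, and invokes translation invariance. You instead derive that same two-column formula from scratch: the first column comes from iterating the normalized Szeg\H{o} step encoded by $A_z(\alpha)$ starting from $(1,1)^T$, and the second column comes from the conjugation $\sigma_3 A_z(\alpha)\sigma_3 = A_z(-\alpha)$, which cleanly converts the $(1,-1)^T$ iteration into the first-column computation for the sign-flipped Verblunsky data, i.e.\ the Aleksandrov polynomials at $\beta=-1$ (the second-kind polynomials). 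The net effect is the same — both arguments hinge on the Szeg\H{o} recursion, the Aleksandrov identification $\Phi^{\beta}_n = \Phi^{[0,n-1]}_{\beta,\bullet}$, and translation to $[a,b]$ — but your version makes the appearance of the second-kind polynomials transparent rather than imported, and the $\sigma_3$ trick is a nice unification that the paper (and the cited reference) leave implicit. One minor caveat worth making explicit: $\rho(-\alpha)=\rho(\alpha)$, which is what allows $\sigma_3 T^{[a,b]}(z)\sigma_3$ to be literally $\widetilde T^{[a,b]}(z)$ with no change to the prefactors.
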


\begin{proof}
 The $T_n(z)$ in \cite{s1foot} is $T^{[0,n-1]}(z)$
 in our notation. We have that
 \[
  T_n(z) = \frac{1}{2} \begin{pmatrix}
   \varphi_n(z) + \psi_n(z) & \varphi_n(z) - \psi_n(z) \\
   \varphi_n^*(z) - \psi_n^*(z) &  \varphi_n^*(z)
    + \psi_n^*(z)\end{pmatrix}.
 \]
 It follows that
 \[
  T^{[0,n-1]}(z) = \frac{1}{2} \begin{pmatrix}
   \varphi^{[0,n-1]}_{1,\bullet}(z) + 
    \varphi^{[0,n-1]}_{-1,\bullet}(z) & 
   \varphi^{[0,n-1]}_{1,\bullet}(z) -  
    \varphi^{[0,n-1]}_{-1,\bullet}(z) \\
   (\varphi^{[0,n-1]}_{1,\bullet})^*(z) - 
    (\varphi^{[0,n-1]}_{-1,\bullet})^*(z) &  
   (\varphi^{[0,n-1]}_{1,\bullet})^*(z)+ 
   (\varphi^{[0,n-1]}_{-1,\bullet})^*(z)\end{pmatrix}.
 \]
 The claim follows using translation invariance.
\end{proof}

We thus obtain that

\begin{corollary}
 We have that
 \be\label{eq:varphibetabullet}
  \begin{pmatrix} \varphi^{[a,b]}_{\beta,\bullet}(z)\\
  (\varphi^{[a,b]}_{\beta,\bullet})^{*}(z) \end{pmatrix} = 
   T^{[a,b]}(z) \begin{pmatrix} 1 \\ \ol{\beta} \end{pmatrix}
 \ee
 and
 \be\label{eq:varphibetagamma}
  \varphi^{[a,b]}_{\beta,\gamma}(z) = 
   \frac{1}{\rho_b}
    \spr{\begin{pmatrix} z \\ - \ol{\gamma} \end{pmatrix}}
    {T^{[a,b-1]}(z) \begin{pmatrix} 1 \\ \ol{\beta} \end{pmatrix}}.
 \ee
\end{corollary}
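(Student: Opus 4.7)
The plan is to prove both identities by interpolation in the boundary parameter $\beta$, leveraging the explicit $2\times 2$ matrix form of $T^{[a,b]}$ given in the preceding lemma.

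For \eqref{eq:varphibetabullet}, the key observation is that $\varphi^{[a,b]}_{\beta,\bullet}(z)$ depends affinely on $\ol{\beta}$. Indeed, by Lemma~\ref{lem:tridiag} the parameter $\beta$ enters exactly one diagonal entry of the tridiagonal matrix whose determinant (up to the $\rho$-normalization) is $\Phi^{[a,b]}_{\beta,\bullet}(z)$; expanding that determinant along the $a$-th row yields
\[ \varphi^{[a,b]}_{\beta,\bullet}(z) = \tfrac{1+\ol{\beta}}{2}\,\varphi^{[a,b]}_{1,\bullet}(z) + \tfrac{1-\ol{\beta}}{2}\,\varphi^{[a,b]}_{-1,\bullet}(z), \]
together with the analogous statement for the reverse polynomial. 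Reading off the columns of $T^{[a,b]}$ from the preceding lemma gives
\[ T^{[a,b]}(z)\begin{pmatrix}1\\1\end{pmatrix} = \begin{pmatrix}\varphi^{[a,b]}_{1,\bullet}(z)\\(\varphi^{[a,b]}_{1,\bullet})^*(z)\end{pmatrix}, \qquad T^{[a,b]}(z)\begin{pmatrix}1\\-1\end{pmatrix} = \begin{pmatrix}\varphi^{[a,b]}_{-1,\bullet}(z)\\(\varphi^{[a,b]}_{-1,\bullet})^*(z)\end{pmatrix}, \]
so that the decomposition $\binom{1}{\ol\beta} = \tfrac{1+\ol\beta}{2}\binom{1}{1} + \tfrac{1-\ol\beta}{2}\binom{1}{-1}$ combined with linearity of $T^{[a,b]}$ produces \eqref{eq:varphibetabullet}.

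For \eqref{eq:varphibetagamma}, I would introduce $\gamma$ by one additional step of the Szeg\H o-type recursion. Replacing $\alpha_b$ by $\gamma$ only affects the last step of the recursion building $\Phi^{[a,b]}_{\beta,\gamma}$ out of $\Phi^{[a,b-1]}_{\beta,\bullet}$, so
\[ \Phi^{[a,b]}_{\beta,\gamma}(z) = z\,\Phi^{[a,b-1]}_{\beta,\bullet}(z) - \ol\gamma\,(\Phi^{[a,b-1]}_{\beta,\bullet})^*(z). \]
Dividing by $\rho_a\cdots\rho_b$ (and using that the $\rho$'s are real) rewrites this as
\[ \varphi^{[a,b]}_{\beta,\gamma}(z) = \frac{1}{\rho_b}\spr{\begin{pmatrix}z\\-\ol\gamma\end{pmatrix}}{\begin{pmatrix}\varphi^{[a,b-1]}_{\beta,\bullet}(z)\\(\varphi^{[a,b-1]}_{\beta,\bullet})^*(z)\end{pmatrix}}, \]
and substituting \eqref{eq:varphibetabullet} at level $b-1$ into the right-hand vector gives \eqref{eq:varphibetagamma}.

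The main obstacle is the convention bookkeeping for the $*$-reversal. Since the ordinary polynomial $*$ conjugates coefficients, $(\varphi^{[a,b]}_{\beta,\bullet})^*$ is naively affine in $\beta$ rather than $\ol\beta$, which at face value conflicts with the vector $\binom{1}{\ol\beta}$ in \eqref{eq:varphibetabullet}. The right fix is to interpret the reversal of an Aleksandrov-type polynomial with the twist $(\varphi^{[a,b]}_{\beta,\bullet})^*(z) := z^{b-a+1}\overline{\varphi^{[a,b]}_{\ol\beta,\bullet}(\ol{z}^{-1})}$, which keeps the affine dependence in $\ol\beta$; the identity is then immediate at the endpoints $\beta=\pm 1$ (where $\ol\beta=\beta$) and extends to all $\beta$ by linearity.
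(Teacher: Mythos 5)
Your treatment of \eqref{eq:varphibetagamma} matches the paper's proof verbatim: both derive it from a single Szeg\H{o}-type step $\Phi^{[a,b]}_{\beta,\gamma}=z\Phi^{[a,b-1]}_{\beta,\bullet}-\ol\gamma(\Phi^{[a,b-1]}_{\beta,\bullet})^*$ followed by substitution of the first identity. For \eqref{eq:varphibetabullet}, however, you take a genuinely different route: the paper simply cites (3.2.26) of \cite{opuc1}, whereas you derive it from the explicit $2\times2$ form of $T^{[a,b]}$ given in the preceding lemma, via the linear decomposition $\binom{1}{\ol\beta}=\tfrac{1+\ol\beta}{2}\binom{1}{1}+\tfrac{1-\ol\beta}{2}\binom{1}{-1}$ and the affine dependence of the Aleksandrov polynomial on $\ol\beta$. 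This is the right idea and makes the corollary self-contained rather than a citation, which is a plus.

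Two cautions. First, your stated justification for the affine dependence is too glib. You invoke ``expanding the determinant along the $a$-th row'' of the tridiagonal matrix from Lemma~\ref{lem:tridiag}, but $\Phi^{[a,b]}_{\beta,\bullet}$ equals $\det(z(\mathcal{L})^*-\mathcal{M})\cdot\det((\mathcal{L})^*)$, and both factors can carry $\beta$-dependence. Moreover, by Lemma~\ref{lem:tridiag} the $(a,a)$ entry of the tridiagonal matrix contains $\alpha_{a-1}=\beta$ \emph{un}conjugated when $a$ is even and $\ol\beta$ when $a$ is odd, so determinant expansion alone gives affinity in $\beta$ for one parity and in $\ol\beta$ for the other; only after combining with the $\det((\mathcal{L})^*)$ factor do the two cases reconcile. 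It is cleaner to cite the standard identity $\varphi^\beta_n=\tfrac{1+\ol\beta}{2}\varphi_n+\tfrac{1-\ol\beta}{2}\psi_n$ directly ((3.2.19) in \cite{opuc1}), or to derive it from the transfer-matrix conjugation $A(\beta\alpha,z)=\mathrm{diag}(1,\beta)\,A(\alpha,z)\,\mathrm{diag}(1,\ol\beta)$. Second, your closing observation about the $*$-reversal is on the mark: with the literal definition $(\varphi^{[a,b]}_{\beta,\bullet})^*(z)=z^{b-a+1}\ol{\varphi^{[a,b]}_{\beta,\bullet}(\ol z^{-1})}$, the reversed Aleksandrov polynomial is affine in $\beta$ rather than $\ol\beta$, so the second component of \eqref{eq:varphibetabullet} holds only up to the unimodular factor $\beta$ (equivalently, with the twisted reversal you describe). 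Since $|\beta|=1$ and the corollary is used downstream only inside absolute values (Proposition~\ref{prop:greenaspolynomial}, Theorem~\ref{thm:uegreen}), this does not affect anything in the paper, but it is worth flagging explicitly as you did.
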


\begin{proof}
 The first equation is (3.2.26) in \cite{opuc1}.
 For the second equation, we have that
 \[
  \Phi^{[a,b]}_{\beta,\gamma}(z) = z
   \Phi^{[a,b-1]}_{\beta,\bullet}(z) - \ol{\gamma}
    (\Phi^{[a,b-1]}_{\beta,\bullet})^*(z).
 \] 
 We thus have that
 \[
  \varphi^{[a,b]}_{\beta,\gamma}(z) = 
   \frac{1}{\rho_b} \left( z 
    \varphi^{[a,b-1]}_{\beta,\bullet}(z) - \ol{\gamma}
    (\varphi^{[a,b-1]}_{\beta,\bullet})^*(z)
      \right),
 \]
 which implies the second equation by the first one.
\end{proof}

There is one final object, we need to identify
$\varphi^{[a,b]}_{\bullet, \gamma}(z)$. We employ
the same strategy as we used in Lemma~\ref{lem:relationC}
to identify $\mathcal{E}^{(-\infty,0]}_{\bullet,\gamma}$.
Let 
\be
 \ti{\alpha}_{n} = \begin{cases} \gamma, & n = -1;\\
   - \ol{\alpha}_{b-n}, &n \geq 0.\end{cases}
\ee
Then we have that
\be
 \varphi^{[a,b]}_{\bullet, \gamma}(z) =
  \ti{\varphi}^{[0,b-a-1]}_{\gamma,\bullet}(z).
\ee

\begin{lemma}
 We have htat
 \be\label{eq:varphibulletgamma}
  \begin{pmatrix} \varphi^{[a,b]}_{\bullet,\gamma}(z) \\
   (\varphi^{[a,b]}_{\bullet,\gamma})^*(z) \end{pmatrix} =
   \begin{pmatrix} - \frac{1}{z} & 0 \\ 0 & 1 \end{pmatrix}
    (T^{[a,b]}(z))^t \begin{pmatrix} -z\\\ol{\gamma} \end{pmatrix}.
 \ee
\end{lemma}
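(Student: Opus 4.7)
The plan is to reduce this identity to the already-proven formula \eqref{eq:varphibetabullet} by exploiting the reflection-and-conjugation symmetry of CMV matrices, mirroring the way $\mathcal{C}_-$ was identified in Lemma~\ref{lem:relationC}. Specifically, the displayed identification $\varphi^{[a,b]}_{\bullet,\gamma}(z) = \ti\varphi^{[0,b-a-1]}_{\gamma,\bullet}(z)$ (with $\ti\alpha_n$ as defined immediately above the lemma), together with the analogous equality for the starred polynomial, rewrites the left-hand side of the lemma as
\begin{equation*}
 \begin{pmatrix} \varphi^{[a,b]}_{\bullet,\gamma}(z) \\ (\varphi^{[a,b]}_{\bullet,\gamma})^*(z) \end{pmatrix}
 = \ti T^{[0,b-a-1]}(z) \begin{pmatrix} 1 \\ \ol{\gamma} \end{pmatrix},
\end{equation*}
via \eqref{eq:varphibetabullet} applied to the $\ti\alpha$-system. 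All that then remains is to re-express the reflected transfer matrix $\ti T$ in terms of the original $T^{[a,b]}$.

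The algebraic heart is a one-step identity. Setting $J := \begin{pmatrix} -1/z & 0 \\ 0 & 1 \end{pmatrix}$, a direct $2 \times 2$ computation gives
\begin{equation*}
 A_z(-\ol{\alpha}) = J\, A_z(\alpha)^t\, J^{-1}.
\end{equation*}
Iterating this through each factor of $\ti T^{[0,b-a-1]}(z) = A_z(\ti\alpha_{b-a-1}) \cdots A_z(\ti\alpha_0)$, the intermediate $J^{-1}J$ pairs telescope, leaving a single outer $J$, a single inner $J^{-1}$, and a reversed-order product of transposes in between. The identity $X_1^t \cdots X_n^t = (X_n \cdots X_1)^t$ then collapses that middle piece into $(T^{[a,b]}(z))^t$, yielding $\ti T^{[0,b-a-1]}(z) = J\,(T^{[a,b]}(z))^t\,J^{-1}$.

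Combining the two steps and using $J^{-1}\begin{pmatrix}1\\\ol{\gamma}\end{pmatrix} = \begin{pmatrix}-z\\\ol{\gamma}\end{pmatrix}$ reproduces exactly the right-hand side of the lemma. The only genuine obstacle is index bookkeeping: the CMV structure is asymmetric under reversal---there is a half-step offset between $\mathcal L$ and $\mathcal M$, and the $\rho$-normalizations sit differently at the two endpoints---so one must verify carefully that the reflection $\ti\alpha_n = -\ol\alpha_{b-n}$ on $[0,b-a-1]$ is arranged so that the emerging transfer matrix is precisely $T^{[a,b]}$ (rather than $T^{[a+1,b]}$ or a similar shifted variant), and that the $\rho$-factors implicit in the normalization $\varphi = \Phi / \prod_j \rho_j$ transform consistently under the reflection. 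Once those bookkeeping details are pinned down, the rest is the short linear-algebra computation above.
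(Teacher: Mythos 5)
Your argument is exactly the paper's proof, written out: the paper records the equivalent one‑step identity $\begin{pmatrix} -1/z & 0 \\ 0 & 1 \end{pmatrix} A_z(-\ol\alpha)^t \begin{pmatrix} -z & 0 \\ 0 & 1 \end{pmatrix} = A_z(\alpha)$ (your $A_z(-\ol\alpha) = J A_z(\alpha)^t J^{-1}$ is its transpose) and leaves the telescoping through the reflected transfer matrix together with the application of \eqref{eq:varphibetabullet} implicit. The index‑bookkeeping worry you flag is real---working literally with the displayed $\ti\varphi^{[0,b-a-1]}_{\gamma,\bullet}$, the telescoping yields $(T^{[a+1,b]})^t$ rather than $(T^{[a,b]})^t$, so there is an off‑by‑one in the paper's pre‑lemma identification---but that slip belongs to the paper's notation, not to the reflection‑and‑conjugation argument that both you and the paper use.
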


\begin{proof}
 We have that
 \[ 
  \begin{pmatrix} - \frac{1}{z} & 0 \\ 0 & 1 \end{pmatrix} 
  A_z(-\ol{\alpha})^{t} \begin{pmatrix} -z & 0 \\ 0 & 1 \end{pmatrix} = A_z(\alpha).
 \] 
 From this the claim follows.
\end{proof}

%
%
%

\section{Strictly ergodic CMV matrices}
\label{sec:ueCMV}

In this section, we will consider families of CMV operators.
This has the advantage that certain formulas will simplify,
when viewed probabilistically. Also strict ergodicity
simplifies certain statements not available in the ergodic
case, in particular \cite{furman}.

Let $\Omega$ be a compact metric space,
$T:\Omega\to\Omega$ a uniquely ergodic
and minimal homeomorphism,
and $\mu$ the unique $T$-invariant probability measure.
We call $(\Omega,\mu,T)$ strictly ergodic in this case.
For a continuous function $f:\Omega\to\D$, we define
the family of Verblunsky coefficients
\be
 \alpha_{\omega, n} = f(T^n \omega).
\ee
We denote by $\mathcal{E}_{\omega}, \dots$ the associated
objects.

The main example to keep in mind is the $k$-dimensional
skew-shift with $\Omega = \T^k = (\R/\Z)^k$
\be\label{eq:defkskew}
 (T x)_{\ell} = \begin{cases} x_1 + \omega, & \ell = 1; \\
   x_{\ell} + x_{\ell - 1}, &2 \leq \ell \leq k.\end{cases}
\ee
One can then show by induction that
\be
 (T^n x)_{\ell} = \binom{n}{\ell} \omega + \binom{n}{\ell -1} x_1
   + \dots + \binom{n}{0} x_{\ell}.
\ee
This map is strictly ergodic, see Proposition~4.7.4. in \cite{brinstuck}.
Then one can realize the Verblunsky coefficient
from the introduction as $\alpha_{x, n}$
for $f(x) = \lambda \E^{2\pi\I x_k}$ and a particular
choice of $x$.

We now return to our study of the general case
of uniquely ergodic and minimal CMV matrices.

\begin{lemma}
 We have that $\mathcal{E}_{T x} = (S^{\ast} \mathcal{E}_x S)^{t}$,
 where $S$ is the usual forward shift on $\ell^2(\Z)$. In
 particular for any $x,y\in\Omega$
 \be
  \sigma(\mathcal{E}_x) = \sigma(\mathcal{E}_y).
 \ee
\end{lemma}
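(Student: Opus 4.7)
The plan is to first verify the operator identity by tracking what happens to the building blocks $\Theta_n$ under the shift $x\mapsto Tx$, and then read off the spectral equality from unitary equivalence together with minimality.

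First, I would observe that $\alpha_{Tx,n}=f(T^{n+1}x)=\alpha_{x,n+1}$, and hence $\Theta_n(Tx)=\Theta_{n+1}(x)$ as a $2\times 2$ block. Consequently, $\mathcal{L}_{Tx}$ has a $\Theta$-block on $\ell^2(\{2k,2k+1\})$ equal to $\Theta_{2k+1}(x)$, and $\mathcal{M}_{Tx}$ has a $\Theta$-block on $\ell^2(\{2k+1,2k+2\})$ equal to $\Theta_{2k+2}(x)$. On the other hand, conjugating by the forward shift shifts matrix indices by one, so $S^{*}\mathcal{M}_x S$ places $\Theta_{2k+1}(x)$ on $\ell^2(\{2k,2k+1\})$ and $S^{*}\mathcal{L}_x S$ places $\Theta_{2k+2}(x)$ on $\ell^2(\{2k+1,2k+2\})$. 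Matching blocks gives
\be
 \mathcal{L}_{Tx}=S^{*}\mathcal{M}_x S,\qquad
 \mathcal{M}_{Tx}=S^{*}\mathcal{L}_x S,
\ee
and therefore
\be
 \mathcal{E}_{Tx}=\mathcal{L}_{Tx}\mathcal{M}_{Tx}
 =S^{*}\mathcal{M}_x\mathcal{L}_x S.
\ee

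Next I would use that each $\Theta_n$ is a \emph{symmetric} $2\times 2$ matrix, so $\mathcal{L}_x^{t}=\mathcal{L}_x$ and $\mathcal{M}_x^{t}=\mathcal{M}_x$, whence $\mathcal{M}_x\mathcal{L}_x=(\mathcal{L}_x\mathcal{M}_x)^{t}=\mathcal{E}_x^{t}$. Since $S^{t}=S^{*}$ on $\ell^2(\Z)$, we have $(S^{*}AS)^{t}=S^{*}A^{t}S$ for any bounded $A$, so combining gives the desired identity
\be
 \mathcal{E}_{Tx}=S^{*}\mathcal{E}_x^{t}S=(S^{*}\mathcal{E}_xS)^{t}.
\ee

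For the spectral equality, I would argue in two stages. Since $S$ is unitary and transposition preserves spectrum of bounded operators (as $A-\lambda$ is invertible iff $A^{t}-\lambda$ is), the identity above yields $\sigma(\mathcal{E}_{Tx})=\sigma(\mathcal{E}_x)$, so the spectrum is invariant along orbits. Since $f$ is continuous on the compact space $\Omega$, the assignment $x\mapsto\mathcal{E}_x$ is norm-continuous, hence $x\mapsto\sigma(\mathcal{E}_x)$ is upper and lower semicontinuous in the Hausdorff metric on closed subsets of $\partial\D$. Combined with orbit-invariance and minimality of $T$ (so that every orbit is dense in $\Omega$), this forces $\sigma(\mathcal{E}_x)$ to be the same closed set for every $x\in\Omega$.

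The only mildly subtle step is the bookkeeping in identifying $\mathcal{L}_{Tx}$ with $S^{*}\mathcal{M}_x S$ (and vice versa): one has to keep straight that shifting the Verblunsky index by one swaps the parity classes on which $\mathcal{L}$ and $\mathcal{M}$ are supported, and conjugation by $S$ is what realigns these blocks. Everything else is a direct consequence.
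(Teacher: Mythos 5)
Your algebraic verification of $\mathcal{E}_{Tx}=(S^{*}\mathcal{E}_x S)^{t}$ is correct: the block bookkeeping (shifting the Verblunsky index swaps the even/odd parity classes, and conjugation by $S$ realigns them) together with the symmetry of each $\Theta_n$ is exactly what the paper dismisses as ``algebraic.''

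The second half, however, has a genuine gap. You claim that $x\mapsto\mathcal{E}_x$ is norm-continuous because $f$ is continuous on the compact space $\Omega$. This is false for a general minimal homeomorphism: the $n$-th block of $\mathcal{E}_x$ depends on $f(T^n x)$, so norm convergence $\mathcal{E}_{x'}\to\mathcal{E}_x$ would require $\sup_{n\in\Z}|f(T^n x)-f(T^n x')|\to 0$, hence $\sup_{n}d(T^n x, T^n x')\to 0$, as $x'\to x$. This holds when $T$ is an isometry but not in general, and in particular not for the skew-shift \eqref{eq:defkskew}, where the second coordinate of $T^n(x,y)$ contains the term $nx$, so $\sup_n\|n(x-x')\|$ does not vanish as $x'\to x$. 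Consequently the Hausdorff upper semicontinuity of $\sigma(\mathcal{E}_x)$ that you invoke is unavailable. What one does have is strong operator continuity (each entry $f(T^n x)$ is continuous in $x$, and entrywise convergence of banded, uniformly bounded operators gives strong convergence), and for unitary $A_k\to A$ strongly this yields only the one-sided inclusion: if $\dist(z,\sigma(A_k))\ge\eps$ along a subsequence, then $\|(A_k-z)\psi\|\ge\eps\|\psi\|$ for all $\psi$, hence $\|(A-z)\psi\|\ge\eps\|\psi\|$, and normality of $A$ forces $z\notin\sigma(A)$. But that suffices: combine the orbit-invariance $\sigma(\mathcal{E}_{T^n x})=\sigma(\mathcal{E}_x)$ that you established with minimality to pick $n_k$ with $T^{n_k}x\to y$; the strong convergence $\mathcal{E}_{T^{n_k}x}\to\mathcal{E}_y$ then gives $\sigma(\mathcal{E}_y)\subseteq\sigma(\mathcal{E}_x)$, and swapping $x$ and $y$ yields equality. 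This is the strong-convergence route the paper signals by referencing the proof of Lemma~\ref{lem:sigmaCy}.
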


\begin{proof}
 The first claim is algebraic. The second
 claim follows as Lemma~\ref{lem:sigmaCy}.
\end{proof}

For $n\geq 1$, we define the $n$-step (forward) transfer matrix by
\be\label{eq:defTxn}
 T_{x;n}(z) = A(\alpha_{x,n-1}, z) \cdots A(\alpha_{x,0}, z).
\ee
We note that $T_{x;n}(z) = T^{[0,n-1]}_{x}(z)$ in the notation
of the previous section, and that also
$T^{[a,b]}_x(z) = T_{T^{a} x; b-a+1}(z)$.
The Lyapunov exponent is defined by
\be
 L(z) = \lim_{n\to\infty} \frac{1}{n} \int_{\T^k}
  \log\|T_{n,x}(z)\| dx.
\ee
We collect its properties

\begin{proposition}\label{prop:proplyap}
 Let $(\Omega,\mu,T)$ be strictly ergodic and
 $z\in\partial\D$.
 \begin{enumerate}
  \item $L(z) \geq 0$.
  \item For almost-every $x\in\T^k$, we have as $n\to\infty$
   that
   \be
    \frac{1}{n} \log\|T_{x; n}(z)\| \to L(z).
   \ee
  \item For every $\eps > 0$, there exists $N$ such that
   for $n \geq N$ and $x\in\T^K$ we have
   \be
    \frac{1}{n} \log\|T_{x; n}(z)\| \leq L(z) + \eps.
   \ee
 \end{enumerate}
\end{proposition}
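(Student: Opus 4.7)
The plan is to establish the three parts in order, using the standard toolkit for subadditive cocycles plus the unique ergodicity.

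First I would dispatch (i) by a direct determinant computation. Since
\[
\det A_z(\alpha) = \frac{1}{1-|\alpha|^2}\bigl(z - |\alpha|^2 z\bigr) = z,
\]
one has $\det T_{x;n}(z) = z^n$, and for $z \in \partial\D$ this has modulus $1$. Hence the two singular values of $T_{x;n}(z)$ multiply to $1$, so $\|T_{x;n}(z)\| \geq 1$ pointwise. Taking logs, averaging, and passing to the limit gives $L(z)\geq 0$.

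For (ii), I would invoke Kingman's subadditive ergodic theorem. The cocycle identity
\[
T_{x;n+m}(z) = T_{T^n x;\,m}(z)\, T_{x;n}(z)
\]
yields
\[
\log\|T_{x;n+m}(z)\| \leq \log\|T_{T^n x;\,m}(z)\| + \log\|T_{x;n}(z)\|,
\]
i.e.\ the sequence $\log\|T_{x;n}(z)\|$ is a subadditive cocycle over $(\Omega,\mu,T)$. The integrability $\int \log\|T_{x;1}(z)\|\,d\mu(x) < \infty$ follows from continuity of $\alpha\mapsto A_z(\alpha)$ on $\Omega$ and compactness. Since $T$ is uniquely ergodic (in particular ergodic with respect to $\mu$), Kingman's theorem gives the a.e.\ convergence to $L(z)$, and the limit equals the infimum of $\frac{1}{n}\int \log\|T_{x;n}(z)\|\,d\mu(x)$, which is consistent with the definition of $L(z)$ as a limit rather than an infimum (the two agree by Fekete's lemma applied to the subadditive sequence $\int \log\|T_{x;n}\|\,d\mu$).

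The main obstacle is (iii), which is the \emph{uniform} one-sided bound; this does not follow from mere ergodicity but is exactly where unique ergodicity enters. Here I would apply Furman's theorem from \cite{furman}: for a subadditive continuous cocycle over a uniquely ergodic homeomorphism of a compact metric space, one has
\[
\limsup_{n\to\infty}\,\sup_{x\in\Omega}\frac{1}{n}\log\|T_{x;n}(z)\| \leq L(z).
\]
Continuity of $x\mapsto \log\|T_{x;n}(z)\|$ is immediate since $f$ is continuous with values in $\D$ so that $\rho_{x,n}$ is bounded away from $0$ and each entry of $A_z(\alpha_{x,n-1})\cdots A_z(\alpha_{x,0})$ depends continuously on $x$; subadditivity was verified in part (ii). Furman's theorem then gives exactly the statement of (iii): for every $\eps>0$ there is $N$ such that $n\geq N$ and $x\in\Omega$ imply $\frac{1}{n}\log\|T_{x;n}(z)\|\leq L(z)+\eps$. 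The delicate point to check carefully is that Furman's hypotheses apply to our setup, in particular that $\log\|T_{x;1}(z)\|$ is indeed continuous and bounded on $\Omega$, which is where we use that $f:\Omega\to\D$ takes values strictly inside the open disk (so $\rho$ does not vanish).
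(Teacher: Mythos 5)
Your proof is correct and follows the same route as the paper: (i) via the determinant computation $\det A_z(\alpha)=z$ and singular values, (ii) via the subadditive (Kingman) ergodic theorem, and (iii) via Furman's uniform upper bound for subadditive cocycles over uniquely ergodic systems. The paper's own proof is just a one-line pointer to each of these three ingredients, and your write-up correctly fills in the standard details (cocycle identity, continuity of $x\mapsto\log\|T_{x;1}(z)\|$ from $f(\Omega)\subset\D$ and compactness).
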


\begin{proof}
 (i) follows from $\det(A(\alpha,z)) = z$.
 (ii) is the subadditive ergodic theorem (see Corollary 10.5.25
  in \cite{opuc2}).
 (iii) is Furman's strengthening for uniquely
 ergodic transformations \cite{furman}.
\end{proof}

The right extension of \eqref{eq:defTxn}
for negative numbers is
\be
 T_{x; - n}(z) = \begin{pmatrix} -\frac{1}{z} & 0 \\ 0 & 1
   \end{pmatrix} A(-\ol{\alpha_{x, -1}}, z) \cdots
  A(-\ol{\alpha_{x,-n}}, z) \begin{pmatrix} 
   -z & 0 \\ 0 & 1 \end{pmatrix}
\ee
(where $n\geq 0$). This can be seen from 
\eqref{eq:varphibulletgamma}. In particular,
one has
\be
 L(z) = \lim_{n\to\infty} \frac{1}{n} \int_{\T^k}
  \log\|T_{x; -n}(z)\|dx.
\ee

\begin{lemma}
 Let $(\Omega,\mu,T)$ be strictly ergodic
 and $\eps > 0$.
 There exists $C > 1$ such that for $n \geq 1$
 and $\beta,\gamma \in \partial\D$,
 we have for $0\leq k \leq \ell \leq n-1$ that
 \be
  |G_{x; \beta,\gamma}^{[0,n-1]}(z; k,\ell)| \leq
   C \frac{\E^{ (L(z) + \eps) (k + n - 1 - \ell)}}
   {|\varphi^{[0,n-1]}_{x; \beta,\gamma}(z)|}. 
 \ee 
\end{lemma}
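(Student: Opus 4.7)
\medskip
\noindent\textbf{Proof plan.} The starting point is Proposition~\ref{prop:greenaspolynomial}, which gives
\[
 |G^{[0,n-1]}_{x;\beta,\gamma}(z;k,\ell)|
  = \frac{1}{\rho_k \rho_\ell}
   \left| \frac{\varphi_{x;\beta,\bullet}^{[0,k-1]}(z)\,
     \varphi_{x;\bullet,\gamma}^{[\ell+1,n-1]}(z)}
    {\varphi_{x;\beta,\gamma}^{[0,n-1]}(z)}\right|.
\]
The denominator already matches the right-hand side of the desired inequality, so the entire task reduces to bounding the two factors in the numerator by $C\,\E^{(L(z)+\eps)\cdot(\text{window length})}$, and to controlling the prefactor $1/(\rho_k \rho_\ell)$.

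The numerator factors are transfer matrix quantities. Formula~\eqref{eq:varphibetabullet} represents the column vector $(\varphi^{[0,k-1]}_{x;\beta,\bullet}(z),(\varphi^{[0,k-1]}_{x;\beta,\bullet})^*(z))^t$ as $T^{[0,k-1]}(z)\binom{1}{\ol{\beta}}$, so since $|\beta|=1$,
\[
 |\varphi^{[0,k-1]}_{x;\beta,\bullet}(z)| \le \sqrt{2}\,\|T^{[0,k-1]}(z)\| = \sqrt{2}\,\|T_{x;k}(z)\|.
\]
An identical estimate via \eqref{eq:varphibulletgamma} gives, after noting that for $z\in\partial\D$ the diagonal factor $\operatorname{diag}(-1/z,1)$ is an isometry,
\[
 |\varphi^{[\ell+1,n-1]}_{x;\bullet,\gamma}(z)| \le \sqrt{2}\,\|T^{[\ell+1,n-1]}(z)\| = \sqrt{2}\,\|T_{T^{\ell+1}x;\,n-1-\ell}(z)\|.
\]

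Now I invoke Furman's strengthening, Proposition~\ref{prop:proplyap}(iii): there is $N=N(\eps)$ so that $\|T_{y;m}(z)\| \le \E^{(L(z)+\eps)m}$ for every $y\in\Omega$ and every $m\ge N$. For windows of length below $N$ I use the trivial bound $\|T_{y;m}(z)\|\le C_0^m$ with $C_0=\sup_{\alpha\in f(\Omega)}\|A_z(\alpha)\|$, which is finite since $f(\Omega)$ is a compact subset of $\D$ (so $|\alpha|$ stays uniformly below $1$ and the $1/\rho_n$ factor in $A_z$ is uniformly bounded); enlarging $C$ to absorb the finitely many small-$m$ cases gives $\|T_{y;m}(z)\|\le C\,\E^{(L(z)+\eps)m}$ uniformly in $y$ and $m\ge 1$. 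Applying this with $m=k$ and $m=n-1-\ell$ and multiplying, the numerator is bounded by a constant multiple of $\E^{(L(z)+\eps)(k+n-1-\ell)}$.

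Finally, the same compactness of $f(\Omega)\subset\D$ bounds $\rho_n=(1-|f(T^nx)|^2)^{1/2}$ from below by a positive constant, so $1/(\rho_k \rho_\ell)$ is uniformly bounded; folding this constant into $C$ completes the estimate. The only mildly delicate point is the small-$m$ boundary case, which is purely a matter of adjusting $C$; everything else is a direct concatenation of Proposition~\ref{prop:greenaspolynomial}, the transfer-matrix identities \eqref{eq:varphibetabullet}--\eqref{eq:varphibulletgamma}, and Furman's uniform upper bound.
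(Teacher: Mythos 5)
Your proposal is correct and follows essentially the same route as the paper: Proposition~\ref{prop:greenaspolynomial} reduces the bound to the two numerator polynomials, \eqref{eq:varphibetabullet}--\eqref{eq:varphibulletgamma} convert those to transfer matrix norms, and Furman's uniform upper bound (Proposition~\ref{prop:proplyap}(iii)) gives the exponential estimate. The paper leaves the small-window and $1/(\rho_k\rho_\ell)$ prefactor issues implicit, absorbing them into the constant; your explicit treatment via compactness of $f(\Omega)\subset\D$ is the right way to justify that step.
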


\begin{proof}
 By (iii) of Proposition~\ref{prop:proplyap}, there exists $c \geq 1$
 such that for any $x\in\Omega$ and $n\geq 1$, we have
 \[
  \|T_{x; n}(z)\| \leq c \E^{(L(z) + \eps) n}.
 \]
 By \eqref{eq:varphibetabullet} and \eqref{eq:varphibulletgamma},
 we obtain that the numerator in Proposition~\ref{prop:greenaspolynomial}
 is bounded by
 \[
  c_2 \cdot \E^{(L(z) + \eps) (n-1 - \ell + k)}.
 \]
 The claim follows.
\end{proof}

In particular, we obtain the important theorem

\begin{theorem}\label{thm:uegreen}
 Let $(\Omega,\mu,T)$ be strictly ergodic,
 $m \in (0, L(E))$, $\delta > 0$,
 and $\beta_0, \gamma_0\in\partial\D$.
 Then for $n$ large enough, there exists $\Omega_n$ 
 satisfying $\mu(\Omega_n) \geq 1 - \delta$ and
 for $x \in \Omega_n$ there exists
 \be
  \beta \in \{-\beta_0,\beta_0\},\quad 
   \gamma\in\{-\gamma_0,\gamma_0\}
 \ee
 such that
 for $\frac{n}{3} \leq \ell \leq \frac{2n}{3}$
 and $k \in \{0,n-1\}$
 \be
  |G_{x; \beta,\gamma}^{[0,n-1]}(z; k,\ell)| \leq
   \E^{- m |k - \ell|}.
 \ee
\end{theorem}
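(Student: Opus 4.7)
The plan is to combine the upper bound on $|G^{[0,n-1]}_{x;\beta,\gamma}(z;k,\ell)|$ from the preceding lemma with an exponential lower bound on $|\varphi^{[0,n-1]}_{x;\beta,\gamma}(z)|$, choosing $(\beta,\gamma)\in\{\pm\beta_0\}\times\{\pm\gamma_0\}$ in an $x$-adapted way. The algebraic key is that for $z\in\partial\D$ and $|\beta_0|=|\gamma_0|=1$, the vectors $\binom{1}{\pm\ol{\beta_0}}$ are orthogonal of equal length, and likewise $\binom{z}{\pm\ol{\gamma_0}}$. Writing $\rho_{n-1}\varphi^{[0,n-1]}_{x;\beta,\gamma}(z)=\langle\binom{z}{-\ol\gamma},T_{x;n-1}(z)\binom{1}{\ol\beta}\rangle$ via \eqref{eq:varphibetagamma} and applying Parseval in each factor yields
\[
\sum_{\beta\in\{\pm\beta_0\},\,\gamma\in\{\pm\gamma_0\}}\bigl|\varphi^{[0,n-1]}_{x;\beta,\gamma}(z)\bigr|^2\rho_{n-1}^2\;=\;4\|T_{x;n-1}(z)\|_{\mathrm{HS}}^2\;\ge\;4\|T_{x;n-1}(z)\|^2,
\]
so at least one of the four sign combinations produces $|\varphi^{[0,n-1]}_{x;\beta,\gamma}(z)|\ge\|T_{x;n-1}(z)\|$. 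Since $f(\Omega)$ is compact in $\D$, the $\rho_j$ are uniformly bounded below; the chosen $(\beta,\gamma)$ depends on $x$ but is independent of $k$ and $\ell$.

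Next I would supply the Lyapunov lower bound on $\|T_{x;n-1}(z)\|$. By Proposition~\ref{prop:proplyap}(ii), $\frac{1}{n}\log\|T_{x;n}(z)\|\to L(z)$ for a.e.~$x$, so Egorov's theorem furnishes $\Omega_n$ with $\mu(\Omega_n)\ge 1-\delta$ on which $\|T_{x;n-1}(z)\|\ge\E^{(L(z)-\eps)(n-1)}$ uniformly for $n$ large, with $\eps>0$ a small parameter to be chosen. Feeding this into the preceding lemma yields, for $0\le k\le\ell\le n-1$ and $x\in\Omega_n$,
\[
|G^{[0,n-1]}_{x;\beta,\gamma}(z;k,\ell)|\;\le\;C\,\E^{(L(z)+\eps)(k+n-1-\ell)-(L(z)-\eps)(n-1)}.
\]
For $k=0$ and $\ell\in[n/3,2n/3]$ the exponent reduces to $2\eps(n-1)-(L(z)+\eps)\ell$, which is bounded above by $-m\ell=-m|k-\ell|$ as soon as $\eps<(L(z)-m)/5$ and $n$ is large.

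For the complementary case $k=n-1$, $\ell\in[n/3,2n/3]$, I would invoke the symmetry $|G^{[0,n-1]}_{x;\beta,\gamma}(z;k,\ell)|=|G^{[0,n-1]}_{x;\beta,\gamma}(z;\ell,k)|$, valid on $\partial\D$: Cramer's rule applied to the tridiagonal matrix of Lemma~\ref{lem:tridiag} shows the two entries differ only by the ratio of products of upper versus lower off-diagonal entries, and for $|z|=1$ one has $|A_{j,j+1}|=|A_{j+1,j}|=\rho_j$. After swapping $k$ and $\ell$, the preceding lemma applies with $k'+n-1-\ell'=\ell$ in the exponent, and the bound $\le\E^{-m(n-1-\ell)}=\E^{-m|k-\ell|}$ again follows from $\eps<(L(z)-m)/5$.

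The main obstacle, in my view, is the four-way orthogonality step: without simultaneously varying both boundary conditions, a single fixed $\varphi^{[0,n-1]}_{x;\beta_0,\gamma_0}$ might accidentally be much smaller than $\|T_{x;n-1}(z)\|$ and spoil the Green's-function estimate. Once the adaptive boundary choice is in place, the Egorov step and the final exponent bookkeeping are routine.
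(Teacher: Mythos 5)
Your proposal is correct and follows essentially the same route as the paper: express $\varphi^{[0,n-1]}_{x;\beta,\gamma}$ via the transfer matrix using \eqref{eq:varphibetagamma}, exploit the orthogonality of the boundary-condition vectors $\binom{1}{\pm\ol{\beta_0}}$ and $\binom{z}{\pm\ol{\gamma_0}}$ for $|z|=1$ to guarantee that at least one of the four sign choices makes $|\varphi|\gtrsim\|T_{x;n-1}(z)\|$, then combine the a.e.\ Lyapunov growth (made uniform by Egorov) with the preceding lemma's Green's-function upper bound. The only differences are expository: you spell out the Parseval/Hilbert--Schmidt step and the $k=n-1$ reduction by the complex symmetry of $z\mathcal{L}^*-\mathcal{M}$, both of which the paper leaves implicit.
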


\begin{proof}
 By \eqref{eq:varphibetagamma}, we have that
 \[
  \begin{pmatrix} \varphi^{[0,n-1]}_{x;\beta,\gamma}(z) &
   \varphi^{[0,n-1]}_{x;-\beta,\gamma}(z) \\
   \varphi^{[0,n-1]}_{x;\beta,-\gamma}(z) &
   \varphi^{[0,n-1]}_{x;-\beta,-\gamma}(z) \end{pmatrix}
   = \begin{pmatrix} z & -\ol{\gamma} \\ z & \ol{\gamma}
     \end{pmatrix}
   T_{x,n}(z) \begin{pmatrix} 1 & 1 \\ \beta & -\beta \end{pmatrix}
 \]
 Since for almost every $x$
 $\frac{1}{n}\log\|T_{x,n}(z)\| \geq L(z) (1 - \eps)$
 for $n$ large enough, the claim
 follows.
\end{proof}

%
%
%

\section{Rotationally invariance and the proof of Theorem~\ref{thm:main2}}
\label{sec:skew}

We begin this section by investigating what
happens if one rotates the Verblunsky coefficients,
which is essentially what we used to prove Theorem~\ref{thm:main1}.
We have the following important proposition

\begin{proposition}\label{prop:rotatealphas}
 Let $\beta,\gamma\in\ol\D$, $a<b$ integers, and
 $x,y\in\T$ and define
 \be
  \ti{\alpha}_n = e(n x+y) \alpha_n,\quad
   \ti{\beta} = e((a-1) x+ y) \beta,\quad
    \ti{\gamma} = e(b x + y) \gamma.
 \ee
 Then $\mathcal{E}^{[a,b]}_{\beta,\gamma}$
 and $e(x) \widetilde{\mathcal{E}}^{[a,b]}_{\ti{\beta},\ti{\gamma}}$
 are unitarily equivalent.
\end{proposition}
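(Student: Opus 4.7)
The plan is to exhibit an explicit diagonal unitary on $\ell^2(\Z)$ that conjugates the full-line operator $\widetilde{\mathcal{E}}$ (with Verblunsky coefficients $\tilde\alpha_n = e(nx+y)\alpha_n$ at every site) into $e(-x)\mathcal{E}$, and then descend to the restriction. Notice that the boundary values $\tilde\beta = e((a-1)x+y)\beta$ and $\tilde\gamma = e(bx+y)\gamma$ are precisely what is obtained by applying the rotation rule $\tilde\alpha_n = e(nx+y)\alpha_n$ at $n = a-1$ and $n = b$, so one is really looking at a single uniform rotation of the whole-line sequence including the boundary positions.

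To build the conjugating unitary $V = \mathrm{diag}(v_n)$, observe that $\tilde\rho_n = \rho_n$ while $\tilde\alpha_n$ carries the phase $e(nx+y)$; conjugation by $V$ multiplies the $(m,k)$ entry by $\ol{v_m} v_k$. Matching phases on each of the (at most) four nonzero entries in every column of the CMV matrix leads to a pair of recursion relations,
\begin{align*}
 v_{n+1}/v_n &= e(nx+y), \quad n \text{ even}, \\
 v_{n+1}/v_n &= e(-nx-y), \quad n \text{ odd},
\end{align*}
which, starting from $v_0 = 1$, give the closed form $v_{2j} = e(-jx)$ and $v_{2j+1} = e(jx+y)$. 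The even/odd alternation of $v_n$ is unavoidable, because $\mathcal{L}$ and $\mathcal{M}$ treat even and odd blocks differently. With this $V$ chosen, a short column-by-column check, using the explicit formulas for $\mathcal{E}\delta_k$ derived from $\mathcal{E} = \mathcal{L}\mathcal{M}$ and the definition of $\Theta_n$, verifies that $V^{\ast} \widetilde{\mathcal{E}} V = e(-x)\mathcal{E}$ on $\ell^2(\Z)$.

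Since $V$ is diagonal it commutes with $P^{[a,b]}$, so the identity restricts to $V^{\ast}\widetilde{\mathcal{E}}^{[a,b]}_{\tilde\beta,\tilde\gamma} V = e(-x)\,\mathcal{E}^{[a,b]}_{\beta,\gamma}$, which rearranges to the desired unitary equivalence between $\mathcal{E}^{[a,b]}_{\beta,\gamma}$ and $e(x)\,\widetilde{\mathcal{E}}^{[a,b]}_{\tilde\beta,\tilde\gamma}$. Nothing in the argument requires $|\beta|=|\gamma|=1$, so the conclusion is valid for all $\beta,\gamma\in\ol{\D}$. The main obstacle is purely the bookkeeping of CMV matrix entries: one must track whether $k$ is even or odd when computing $\mathcal{E}\delta_k$, and correspondingly whether the entry involves $\alpha_{k-1}\ol{\alpha_k}$, $\rho_{k-2}\rho_{k-1}$, or one of the other four types, so that the phases absorbed into $\ol{v_m} v_k$ exactly cancel those introduced by the rotation. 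Once the alternating ansatz for $v_n$ is written down, the verification is mechanical.
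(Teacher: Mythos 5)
Your approach is correct, and it is genuinely more direct than the paper's. The paper does not construct a conjugating unitary for $\widetilde{\mathcal{E}}^{[a,b]}_{\tilde\beta,\tilde\gamma}$: instead (Lemma~\ref{lem:rotatealpha}) it establishes the intertwining of linear pencils
\[
\bigl(\tilde{z}(\widetilde{\mathcal{L}}^{[a,b]}_{\tilde\beta,\tilde\gamma})^{\ast}-\widetilde{\mathcal{M}}^{[a,b]}_{\tilde\beta,\tilde\gamma}\bigr)U
=V\bigl(z(\mathcal{L}^{[a,b]}_{\beta,\gamma})^{\ast}-\mathcal{M}^{[a,b]}_{\beta,\gamma}\bigr),\qquad \tilde z=e(-x)z,
\]
using two different diagonal operators $U$ and $V$, and then in the proof of Proposition~\ref{prop:rotatealphas} argues that the two finite matrices have the same spectrum and invokes spectral simplicity to conclude unitary equivalence. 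Your diagonal operator is exactly the paper's $U$ (your recursions $v_{n+1}/v_n=e(nx+y)$ for $n$ even, $v_{n+1}/v_n=e(-nx-y)$ for $n$ odd, with $v_0=1$, reproduce $u_n$ after reindexing $m=n-1$), so you have not found a different conjugator, but you use it differently: equating coefficients of $z$ in the pencil identity gives the two separate factorizations $\widetilde{\mathcal{L}}=e(-x)\,U\mathcal{L}V^{\ast}$ and $\widetilde{\mathcal{M}}=V\mathcal{M}U^{\ast}$, and multiplying them yields $U^{\ast}\widetilde{\mathcal{E}}U=e(-x)\mathcal{E}$ directly on $\ell^2(\Z)$, which then restricts because $U$ is diagonal. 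This buys something concrete. The paper's appeal to simplicity plus equality of spectra only produces unitary equivalence when the two matrices are normal, which is automatic when $\beta,\gamma\in\partial\D$ but not for general $\beta,\gamma\in\ol\D$ (where $\mathcal{E}^{[a,b]}_{\beta,\gamma}$ need not be normal); your explicit conjugation sidesteps that issue entirely and covers the full range stated in the proposition. In exchange, you do have to carry out the column-by-column phase matching against both $\mathcal{L}$ and $\mathcal{M}$, but as you note this is mechanical once the alternating ansatz is written down.
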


Here and in the following, we abbreviate
$e(x) = \E^{2\pi\I x}$.
We will prove this proposition in the case of $a$ and
$b$ finite. It is interesting if it holds for $a,b$
possibly infinite. An inspection of the proof of
Proposition~\ref{prop:rotatealphas} shows that it
also holds for whole line CMV operators with pure
point spectrum. In particular, it implies that
in the case $k = 2$, all the operators $\mathcal{E}_x$
defined by the skew-shift are unitarily equivalent.
Since the Jitomirskaya--Simon \cite{js} argument
applies in our case, all the $\mathcal{E}_x$ have purely
singular continuous spectrum.
For the proof of this proposition, we need the following 
lemma

\begin{lemma}\label{lem:rotatealpha}
Pick some $u_a\in\partial\D$ and define a sequence
 recursively by
 \be
  u_{n} = \begin{cases} u_{n-1} e(-(n-1) x - y),&n\text{ even};\\
    u_{n-1} e((n-1) x+y),&n\text{ odd}.\end{cases}
 \ee
 Furthermore, we define the multiplication operators
 \be
  U \psi(n) = u_n \psi(n),\quad V \psi(n) = \begin{cases}
   u_{n-1} \psi(n), & n\text{ even};\\
   u_{n-1} e(-x) \psi(n), & n\text{ odd}\end{cases}.
 \ee
 Then for $\ti{z} = e(-x) z$
 \be
  \left(\ti{z} (\widetilde{\mathcal{L}}^{[a,b]}_{\ti\beta,\ti\gamma})^*
   - \widetilde{\mathcal{M}}^{[a,b]}_{\ti\beta,\ti\gamma}\right) U =
  V \left(z (\mathcal{L}^{[a,b]}_{\beta,\gamma})^*
   - \mathcal{M}^{[a,b]}_{\beta,\gamma}\right).
 \ee
\end{lemma}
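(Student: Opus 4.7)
The plan is to verify the claimed identity entry by entry, using the explicit tridiagonal form of $z\mathcal{L}^* - \mathcal{M}$ provided by Lemma~\ref{lem:tridiag}. Both sides of the asserted equation are products of a tridiagonal matrix with a diagonal multiplication operator, so each side is itself tridiagonal; hence the identity reduces to a finite list of scalar equations indexed by pairs $(j,k)$ with $|j-k|\leq 1$, and one checks that the recursion defining $u_n$ is exactly what is needed to make them all hold.

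First I would write the nonzero entries of $A := z(\mathcal{L}^{[a,b]}_{\beta,\gamma})^* - \mathcal{M}^{[a,b]}_{\beta,\gamma}$ and of $\ti A := \ti z(\widetilde{\mathcal{L}}^{[a,b]}_{\ti\beta,\ti\gamma})^* - \widetilde{\mathcal{M}}^{[a,b]}_{\ti\beta,\ti\gamma}$ using Lemma~\ref{lem:tridiag}, noting that $|\ti\alpha_n|=|\alpha_n|$ and therefore $\ti\rho_n=\rho_n$. Writing $(\ti A U)_{j,k}=\ti A_{j,k}\,u_k$ and $(VA)_{j,k}=v_j\,A_{j,k}$, where $v_j$ equals $u_{j-1}$ or $u_{j-1}e(-x)$ according to the parity of $j$, splits the verification into four parity cases: diagonal vs.\ off-diagonal, $j$ even vs.\ $j$ odd.

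For the diagonal entry with $j$ even, $\ti A_{j,j}=\ti z\,\ti\alpha_j+\ti\alpha_{j-1}$ factors as $e((j-1)x+y)\,(z\alpha_j+\alpha_{j-1})$ after substituting $\ti z = e(-x)z$ and $\ti\alpha_n=e(nx+y)\alpha_n$, so the identity $\ti A_{j,j}u_j=v_j A_{j,j}$ becomes $u_{j-1}=e((j-1)x+y)\,u_j$, which is precisely the even case of the recursion. For $j$ odd the analogous computation produces $u_{j-1}e(-x)=e(-jx-y)\,u_j$, i.e.\ the odd case. The two off-diagonal identities reduce, via $\ti\rho_j=\rho_j$, to $u_{j+1}=e(x)\,u_{j-1}$ if $j$ is even, and $u_{j+1}=e(-x)\,u_{j-1}$ if $j$ is odd, each of which follows by composing two consecutive steps of the recursion. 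This is the conceptual point: the alternating phases in the recursion for $u_n$ are designed exactly to absorb the rotation $\alpha_n \mapsto e(nx+y)\alpha_n$ once the overall spectral parameter is compensated by $e(-x)$.

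The only subtlety is at the two boundaries $j=a$ and $j=b$, where $\alpha_{a-1}$ and $\alpha_b$ are replaced by $\beta$ and $\gamma$ in $A$, and by $\ti\beta$ and $\ti\gamma$ in $\ti A$. The definitions $\ti\beta = e((a-1)x+y)\beta$ and $\ti\gamma = e(bx+y)\gamma$ from Proposition~\ref{prop:rotatealphas} are chosen precisely so that the interior computation continues to work at the endpoints; at $j=a$ one interprets the appearance of $u_{a-1}$ in $v_a$ via the backward-extension $u_{a-1}:=u_a\,e((a-1)x+y)$ or $u_a\,e(-(a-1)x-y)$ (the sign depending on the parity of $a$), which is the inverse of one step of the forward recursion. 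The main obstacle is therefore nothing more than careful bookkeeping of the four parity cases and the two boundaries; organized in a small table, the verification is entirely mechanical.
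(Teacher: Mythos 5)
Your proposal is correct and takes essentially the same approach as the paper: both verify the identity via the explicit tridiagonal form from Lemma~\ref{lem:tridiag}, reducing the claim to a parity case-analysis in which the two defining relations of the recursion for $u_n$, together with the derived consequences $u_{n+1}=e(x)u_{n-1}$ ($n$ even) and $u_{n+1}=e(-x)u_{n-1}$ ($n$ odd), make each scalar equation hold. The paper expands $(\ti A\,U\psi)(n)$ directly while you compare matrix entries, but this is only a notational difference.
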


\begin{proof}
 A computation shows for $n$ even that
 \[
  \ti{z} \ti{\alpha}_n + \ti{\alpha}_{n-1} = e((n-1)x+y) (z\alpha_n+\alpha_{n-1})
 \]
 and for $n$ odd
 \[
  \ti{z} \ol{\ti{\alpha}_{n-1}} + \ol{\ti{\alpha}_{n}} = 
  e(-nx-y) (z\ol{\alpha_{n-1}}+\ol{\alpha_{n}}).
 \]
 By Lemma~\ref{lem:tridiag}, we thus obtain that for $n$ even
 we have that
 \begin{align*}
  (\ti{z} (\widetilde{\mathcal{L}}^{[a,b]}_{\beta,\gamma})^*
   - \widetilde{\mathcal{M}}^{[a,b]}_{\beta,\gamma}) U \psi (n) &=
   e(-x) z \rho_{n} u_{n+1} \psi(n+1) - \rho_{n-1} u_{n-1} \psi(n-1)\\
& + u_n e((n-1) x +y) (z\alpha_n+\alpha_{n-1}) \psi(n).
 \end{align*}
 Since $u_{n} = e(-(n-1) x - y) u_{n-1}$ and $u_{n+1} = e(x)u_{n-1}$,
 the claimed equality follows for $n$ even.
 Similarly for $n$ odd
 \begin{align*}
  (\ti{z} (\widetilde{\mathcal{L}}^{[a,b]}_{\beta,\gamma})^*
   - \widetilde{\mathcal{M}}^{[a,b]}_{\beta,\gamma}) U \psi (n) &=
   - \rho_n u_{n+1}  \psi_{n+1} + e(-x) z \rho_{n-1} u_{n-1} \psi_{n-1} \\
  & - e(-nx - y) (z \ol{\alpha_{n-1}} + \ol{\alpha_{n}}) u_n \psi(n).
 \end{align*} 
 Since $u_{n+1} = u_n \cdot e(-nx-y)$ and $u_{n} = e((n-1)x + y) u_{n-1}$,
 we obtain the claim.
\end{proof}

\begin{proof}[Proof of Proposition~\ref{prop:rotatealphas}]
 Since the spectra of $\mathcal{E}^{[a,b]}_{\beta,\gamma}$
 and $e(x) \widetilde{\mathcal{E}}^{[a,b]}_{\ti\beta,\ti\gamma}$ are
 simple, it suffices to show that they are the same.
 If $\mathcal{E}^{[a,b]}_{\beta,\gamma} \psi = z \psi$
 for $\psi\neq 0$,
 we have that $(z (\mathcal{L}^{[a,b]}_{\beta,\gamma})^*
   - \mathcal{M}^{[a,b]}_{\beta,\gamma}) \psi = 0$.
 Hence, by the previous lemma also that
 \[
  (\ti{z} (\widetilde{\mathcal{L}}^{[a,b]}_{\ti\beta,\ti\gamma})^*
   - \widetilde{\mathcal{M}}^{[a,b]}_{\ti\beta,\ti\gamma}) \varphi =0
 \]
 for $\varphi = U \psi \neq 0$. Hence, we also have that
 \[
  (z - e(x) \widetilde{\mathcal{E}}^{[a,b]}_{\ti\beta,\ti\gamma}) \varphi =0 
 \]
 which implies the claim.
\end{proof}

We will now begin drawing conclusions from Proposition~\ref{prop:rotatealphas}.
For the sake of concreteness, we will only consider
the Verblunsky coefficients given by
\be
 \alpha_{x,n} = \lambda \E^{2\pi\I (T^n x)_k}
\ee
where $x\in\T^k$, $\lambda\in\D\setminus\{0\}$,
and $T:\T^k\to\T^k$ is the
$k$ dimensional skew-shift defined in \eqref{eq:defkskew}.

For $\theta_1, \theta_2 \in \T$, we denote by
$P_{[\theta_1,\theta_2]}$ the spectral projection
on the arc $\{\E^{2\pi\I t}:\quad t \in [\theta_1, \theta_2] \pmod{1}\}$.
We then have that

\begin{theorem}\label{thm:wegneresti}
 Let $\beta_0,\gamma_0 \in\partial\D$ and define
 \be
  \beta_x = \beta_0 \frac{\alpha_{x,-1}}{|\alpha_{x,-1}|},\quad 
   \gamma_x = \gamma_0 \frac{\alpha_{x,n-1}}{|\alpha_{x,n-1}|}.
 \ee
 Then
 \be
  \frac{1}{n} \int_{\T^k}\tr\left(P_{[\vartheta_1, \vartheta_2]}
   \mathcal{E}^{[0,n-1]}_{x; \beta_x,\gamma_x} \right) dx
  = |\theta_2 - \theta_1|.
 \ee
\end{theorem}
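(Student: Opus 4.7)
The plan is to leverage the rotational symmetry from Proposition~\ref{prop:rotatealphas} by matching a rotation of the Verblunsky coefficients with a translation in the underlying torus $\T^k$. The key observation is that for $k \geq 2$ and the $k$-dimensional skew-shift, the binomial formula $(T^n x)_k = \binom{n}{k}\omega + \binom{n}{k-1}x_1 + \dots + \binom{n}{1}x_{k-1} + x_k$ exhibits $x_{k-1}$ with coefficient $\binom{n}{1} = n$. Consequently, shifting $x \mapsto x + \eta e_{k-1}$ produces $\alpha_{x+\eta e_{k-1}, m} = e(m\eta)\alpha_{x,m}$, which matches the hypothesis of Proposition~\ref{prop:rotatealphas} with its parameters $x = \eta$ and $y = 0$.

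First I would check that the boundary weights $\beta_x, \gamma_x$ defined in the theorem transform in the manner required by the proposition. Using $(T^{-1}(x + \eta e_{k-1}))_k = (T^{-1}x)_k - \eta$ (verified by inverting the recursion $(Tz)_k = z_{k-1} + z_k$) and $(T^{n-1}(x + \eta e_{k-1}))_k = (T^{n-1}x)_k + (n-1)\eta$ (since $\binom{n-1}{1} = n-1$), one obtains
\[
 \beta_{x + \eta e_{k-1}} = e(-\eta)\beta_x, \qquad
 \gamma_{x + \eta e_{k-1}} = e((n-1)\eta)\gamma_x,
\]
which coincide with $\widetilde{\beta} = e((a-1)\eta)\beta_x$ and $\widetilde{\gamma} = e(b\eta)\gamma_x$ in the notation of Proposition~\ref{prop:rotatealphas} with $a = 0$, $b = n-1$. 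The proposition therefore yields the unitary equivalence
\[
 \mathcal{E}^{[0,n-1]}_{x; \beta_x, \gamma_x}
 \;\text{unitarily equivalent to}\;
 e(\eta)\,\mathcal{E}^{[0,n-1]}_{x+\eta e_{k-1};\, \beta_{x+\eta e_{k-1}},\, \gamma_{x+\eta e_{k-1}}}.
\]

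Passing to spectral projections and using $P_I(e(\eta)B) = P_{e(-\eta)I}(B)$, this implies
\[
 \tr\, P_{[\vartheta_1,\vartheta_2]}\bigl(\mathcal{E}^{[0,n-1]}_{x; \beta_x, \gamma_x}\bigr)
 = \tr\, P_{[\vartheta_1-\eta,\vartheta_2-\eta]}\bigl(\mathcal{E}^{[0,n-1]}_{x+\eta e_{k-1};\, \beta_{x+\eta e_{k-1}},\, \gamma_{x+\eta e_{k-1}}}\bigr).
\]
Integrating over $x \in \T^k$ and absorbing the shift by $\eta e_{k-1}$ into the Haar measure on $\T^k$, the map
\[
 F(I) := \int_{\T^k} \tr\, P_I\bigl(\mathcal{E}^{[0,n-1]}_{x; \beta_x, \gamma_x}\bigr)\,dx
\]
is a rotation-invariant non-negative Borel measure on $\partial \D$, hence a multiple of Haar measure by uniqueness. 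The normalization $F(\partial\D) = \int_{\T^k} n\,dx = n$ (since $\mathcal{E}^{[0,n-1]}_{x;\beta_x,\gamma_x}$ is an $n \times n$ unitary) forces $F([\vartheta_1,\vartheta_2]) = n |\vartheta_2 - \vartheta_1|$, and dividing by $n$ gives the claim.

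The main obstacle is the phase bookkeeping in the first step: the definitions of $\beta_x$ and $\gamma_x$ are calibrated precisely so that the rotation factors $e((a-1)\eta) = e(-\eta)$ and $e(b\eta) = e((n-1)\eta)$ in Proposition~\ref{prop:rotatealphas} coincide with the phases the skew-shift produces at the boundary indices $-1$ and $n-1$ when $x$ is translated in the $e_{k-1}$ direction. Any other prescription for $\beta_x, \gamma_x$ would break the unitary equivalence, and the Haar-measure characterization of $F$ would fail.
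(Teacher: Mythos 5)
Your proposal is correct and is essentially the same argument as the paper's: both exploit Proposition~\ref{prop:rotatealphas} by matching a shift in the $x_{k-1}$ coordinate of $\T^k$ to a uniform rotation of the Verblunsky coefficients, so that the eigenvalues of $\mathcal{E}^{[0,n-1]}_{x;\beta_x,\gamma_x}$ rotate rigidly as $x_{k-1}$ varies. The paper's proof is a two-line sketch that asserts the eigenvalues are $\E^{2\pi\I(\theta_j - s)}$ as $s = x_{k-1}$ varies and stops there; you spell out the supporting bookkeeping — the identity $\alpha_{x+\eta e_{k-1},m} = e(m\eta)\alpha_{x,m}$ from the binomial expansion of $(T^n x)_k$, the verification that $\beta_x$ and $\gamma_x$ pick up exactly the phases $e(-\eta)$ and $e((n-1)\eta)$ demanded by the proposition with $a=0$, $b=n-1$, and the conversion $P_I(e(\eta)B) = P_{e(-\eta)I}(B)$ — before concluding via rotation-invariance and normalization of the resulting measure on $\partial\D$. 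Your closing remark is also on the mark: the specific choice of $\beta_x,\gamma_x$ in the theorem is precisely what makes the proposition applicable, and this is the only place the argument could silently fail.
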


\begin{proof}
 We will show this is true, when only performing the $x_{k-1}$
 integral. Let $s = x_{k-1}$. Then changing $s$ amounts to changing
 $x$ in Proposition~\ref{prop:rotatealphas}. Hence, the eigenvalues
 are given by
 \[
  \E^{2\pi\I (\theta_1 - s)},\dots, \E^{2\pi\I (\theta_N - s)}
 \]
 as $s$ varies. This implies the claim.
\end{proof}

It is easy to infer from this that the integrated
density of states is just given by the normalized
Lebesgue measure.
We now come to 

\begin{theorem}\label{thm:lyappos}
 For $z\in \partial\D$, we have that
 \be
  \gamma(z) = - \frac{1}{2} \log(1 - |\lambda|^2).
 \ee
\end{theorem}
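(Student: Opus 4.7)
The plan is to invoke the Thouless formula for ergodic OPUC and combine it with Theorem~\ref{thm:wegneresti}, which identifies the density of states measure $dk$ with normalized Lebesgue measure on $\partial\D$.

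The first observation is that, by the very definition of the skew-shift Verblunsky coefficients, $|\alpha_{x,n}| = |\lambda|$ for every $x \in \T^k$ and every $n \in \Z$, so the Szeg\H o factor $\rho_{x,n} = \sqrt{1-|\lambda|^2}$ is deterministic and constant.  Writing the one-step transfer matrix as
\[
 A_z(\alpha_{x,n}) = \frac{1}{\sqrt{1-|\lambda|^2}}\begin{pmatrix} z & -\ol{\alpha_{x,n}} \\ -\alpha_{x,n} z & 1 \end{pmatrix},
\]
the deterministic prefactor already contributes $-\frac{1}{2}\log(1-|\lambda|^2)$ per step to $\gamma(z)$, and the remaining question is whether the matrix in parentheses produces any further exponential growth.

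Next I would invoke the OPUC Thouless formula (see Chapter~10 of \cite{opuc2}), which in the present strictly ergodic setting reads
\[
 \gamma(z) = -\int_{\T^k}\log\rho_{x,0}\,dx + \int_{\partial\D}\log|z-w|\,dk(w), \qquad z\in\partial\D.
\]
Plugging in $\rho_{x,0} \equiv \sqrt{1-|\lambda|^2}$ gives the first term as $-\frac{1}{2}\log(1-|\lambda|^2)$.  By Theorem~\ref{thm:wegneresti} the measure $dk$ is normalized Lebesgue measure on $\partial\D$, and the classical identity $\int_0^1 \log|e^{2\pi i \phi} - w|\,d\phi = \max\{\log|w|,0\}$ (which equals $0$ when $|w|=1$) shows that the logarithmic potential of $dk$ vanishes on $\partial\D$.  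Adding the two terms yields the claim.

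The main obstacle, modulo availability of a suitable reference, is verifying that the Thouless formula in precisely the above form applies in the strictly ergodic (rather than merely ergodic) framework used here.  If one does not wish to quote it as a black box, a direct derivation is available from Section~\ref{sec:excmv}: using \eqref{eq:varphibetabullet} one writes $\|T_{x,n}(z)\|$ as $\prod \rho_{x,j}^{-1}$ times a combination of the polynomials $\Phi^{[0,n-1]}_{x;\pm 1,\bullet}(z)$, identifies $\frac{1}{n}\log|\Phi^{[0,n-1]}_{x;\beta,\bullet}(z)|$ with the logarithmic potential of the zero-counting measure of $\Phi^{[0,n-1]}_{x;\beta,\bullet}$, and passes to the limit using weak convergence of this zero measure to $dk$ together with the uniform upper bound $\frac{1}{n}\log\|T_{x,n}(z)\| \le \gamma(z) + \eps$ from Proposition~\ref{prop:proplyap}(iii) (to ensure no mass escapes to infinity in the potential).
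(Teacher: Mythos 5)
Your proof is correct and takes essentially the same route as the paper. The paper's own proof is a one-line citation to Theorem~12.6.2 of \cite{opuc2}, which establishes the analogous formula for i.i.d.\ rotation-invariant Verblunsky coefficients by exactly the argument you give: the OPUC Thouless formula, identification of $dk$ with normalized Lebesgue measure on $\partial\D$ via rotation invariance (here supplied by Theorem~\ref{thm:wegneresti} and the remark following it), and the vanishing of the logarithmic potential of normalized arc length on the circle.
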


\begin{proof}
  This can be shown as in Theorem~12.6.2. in \cite{opuc2}.
\end{proof}

\begin{proof}[Proof of Theorem~\ref{thm:main2}]
 For $\theta\in\T$, we have
 \[
  \alpha_{\ti{x}, n} = \E^{2\pi\I \theta} \alpha_{x, n}
 \]
 where
 \[
  \ti{x}_{\ell} = \begin{cases}
   x_{\ell}, &1\leq \ell \leq k-1; \\
   x_{k} + \theta,&\ell=k.\end{cases}.
 \]
 The claim now follows from Theorem~12.6.1.
 in \cite{opuc2}.
\end{proof}

\begin{proof}[Proof of Proposition~\ref{prop:rotatealpha}]
 If $z \in \sigma_{\mathrm{ess}}(\mathcal{C})$ then there
 exists a sequence $\psi_j \in \ell^2(\N)$ such that
 $\|\psi_j\| = 1$,  $\psi_j \to 0$ weakly,
 and $\|(\mathcal{C} - z)\psi_j\| \to 0$. In particular,
 we have for any $N \geq 1$ fixed
 \[
  \sum_{n=1}^{N} |\psi_j(n)|^2 \to 0.
 \]
 By Lemma~\ref{lem:rotatealpha} with $x = 2\pi\eta$,
 $y = 0$, we obtain that $\varphi_j = U \psi_j$
 satisfy $\varphi_j \to 0$ weakly and
 \[
  \|(\widetilde{\mathcal{C}} - \E^{-2\pi\I \eta} z) \varphi_j\|
   \to 0.
 \]
 Hence, the claim follows.
\end{proof}

%
%
%

\section{Eigenvalue statistics and the proof of Theorem~\ref{thm:main4}}
\label{sec:evstat}

Since we will focus on the case $k=2$, it will be convenient
to introduce the skew-shift $T:\T^2\to\T^2$ by
\be
 T(x,y) = (x + 2\omega, x + y) \pmod{1}.
\ee
One easily checks that this is equivalent
to \eqref{eq:defkskew} and that
\be
 T^n(x,y) = (x + 2 n \omega, y + n x+ n(n-1)\omega)\pmod{1}.
\ee
Then our Verblunsky coefficients are given by
\be
 \alpha_{x,y; n} = \lambda e(y + n x + n(n-1)\omega),
\ee
where we use the abbreviation $e(t) = \E^{2\pi\I t}$.

The main goal of this section is to prove the following
theorem, which will imply Theorem~\ref{thm:main4}.

\begin{theorem}\label{thm:evskew}
 Assume $\omega$ satisfies \eqref{eq:conddiop}.
 Let $x,y\in\T$ and $\beta,\gamma\in\partial\D$. 
 There exists $\sigma > 0$ such that for $N$
 sufficiently large, there exist 
 $\theta_1^N, \dots, \theta_N^N$ and $\vartheta^N$
 such that
 \be
  \sigma(\mathcal{E}_{x,y; \beta,\gamma}^{[0,N-1]})
   = \left\{\E^{2\pi\I \theta_1^N}, \dots, \E^{2\pi\I\theta_N^N}
     \right\}
 \ee
 and
 \be
  \frac{1}{N} \#\left\{n:\quad \|\theta_n^N - \vartheta^N + 2 n \omega\|
   > \frac{1}{N^{1 + \sigma}}\right\} \leq \frac{1}{N^{\sigma}}.
 \ee
\end{theorem}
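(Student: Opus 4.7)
The plan is to combine the rotational invariance of Proposition~\ref{prop:rotatealphas} with the quantitative Green's function decay promised in Section~\ref{sec:greendecays} to show that the spectrum of $\mathcal{E}^{[0,N-1]}_{x,y;\beta,\gamma}$ is approximately invariant under rotation by $e(-2\omega)$, and then to invoke the Diophantine hypothesis to conclude that the spectrum realizes a single arithmetic progression of length $\approx N$.

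The first step is a finite-volume rotation identity. A direct computation gives the skew-shift relation $\alpha_{T(x,y);n} = \alpha_{x,y;n+1}$, which identifies $\mathcal{E}^{[0,N-1]}_{T(x,y);\,\cdot,\cdot}$ (after relabeling the index) with $\mathcal{E}^{[1,N]}_{x,y;\,\cdot,\cdot}$. Applying Proposition~\ref{prop:rotatealphas} with its two parameters chosen to match the ratio $\alpha_{T(x,y);n}/\alpha_{x,y;n} = e(2n\omega + x)$ then yields specific boundary conditions $\hat\beta,\hat\gamma\in\partial\D$ (depending on $\beta,\gamma,x,y,N$) for which
\[
\sigma\bigl(\mathcal{E}^{[1,N]}_{x,y;\hat\beta,\hat\gamma}\bigr)\;=\;e(-2\omega)\,\sigma\bigl(\mathcal{E}^{[0,N-1]}_{x,y;\beta,\gamma}\bigr).
\]

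The second step is a spectral comparison of these two operators. They coincide on the interior index set $[1,N-1]$ and differ only by a finite-rank perturbation supported near the endpoints $\{0,N-1,N\}$. Using the quantitative Green's function estimates of Section~\ref{sec:greendecays}, which improve the qualitative bound of Theorem~\ref{thm:uegreen} by exploiting the quantitative recurrence of the skew-shift, all but $O(N^{1-\sigma})$ eigenvalues of either operator correspond to eigenfunctions that are exponentially localized in the bulk region $[N^{\sigma}, N-N^{\sigma}]$. A resolvent expansion then matches such bulk eigenvalues of the two operators within distance $N^{-(1+\sigma)}$. Composed with the rotation identity, this produces a partial map
\[
\phi:D\to\{1,\dots,N\},\quad |D|\geq N-N^{1-\sigma},\quad \theta_{\phi(j)}^N\equiv \theta_j^N-2\omega\pmod 1,
\]
valid up to error $N^{-(1+\sigma)}$.

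The last step is combinatorial and number-theoretic. The Diophantine hypothesis $\|q\omega\|\geq cq^{-\tau}$ separates distinct points of the rotation orbit by at least $cN^{-\tau}$, so for $\sigma>\tau-1$ the matching $\phi$ is injective and its functional graph is a disjoint union of simple paths; cycles of any length $L\leq N$ are excluded since they would force $\|2L\omega\|\leq L\,N^{-(1+\sigma)}$, contradicting the Diophantine bound. Applying the whole argument with $-2\omega$ in place of $2\omega$ controls the in-degree deficit, so there are at most $O(N^{1-\sigma})$ paths in total, and iterating the spectral comparison for the shifted windows $\mathcal{E}^{[k,N+k-1]}$ with $|k|\leq N^{1-2\sigma}$ forces all but $O(N^{1-\sigma})$ vertices onto a single distinguished path. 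Choosing $\vartheta^N$ as the shifted head of this path and indexing $\theta_n^N$ by position along the chain yields the stated bound. The main obstacle is Step 2: the resolvent-perturbation error $N^{-(1+\sigma)}$ must beat the Diophantine spacing $N^{-\tau}$ of potential target eigenvalues, forcing $\sigma>\tau-1$ and demanding Green's function decay from Section~\ref{sec:greendecays} precisely at the length scale $N^{1-\sigma}$; a secondary difficulty is merging the $O(N^{1-\sigma})$ paths into a single long one, which is where iterating the rotation identity across multiple window shifts becomes essential.
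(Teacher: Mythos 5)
Your proposal and the paper both exploit the rotation identity of Proposition~\ref{prop:rotatealphas} and the Diophantine separation, but the mechanisms are quite different, and your version has gaps that I do not think can be patched as stated.

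The paper does not chain anything. From Theorem~\ref{thm:existtest} it extracts a \emph{single} normalized test vector $\psi$ supported in $[2,N^\sigma]$ with $\|(\mathcal{E}^{[0,N-1]}_{x,y;\beta,\gamma}-z)\psi\|\le N^{-\eta}$ where $\eta$ can be taken as large as one likes. Because $\alpha_{x,y;n+k}=e(2k\omega\,n + kx + k(k-1)\omega)\,\alpha_{x,y;n}$, the rotation lemma (Lemma~\ref{lem:rotatealpha}) produces, for \emph{each} $k$ with $0\le k\le N-N^\sigma$, a phase-modulated shift $\psi_k(n)=u_n\psi(n-k)$ that is again an approximate eigenvector of the \emph{same} operator, now at $z_k=e(\vartheta-2k\omega)$, with the \emph{same} error $N^{-\eta}$. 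Thus every eigenvalue estimate is compared directly to the arithmetic progression $\{\vartheta-2k\omega\}$, and the Diophantine bound (separation $\gtrsim N^{-\tau-\eps}$, error $N^{-\tau-2\eps}$) ensures distinct $k$ hit distinct eigenvalues. There is no composition of estimates, hence no accumulation.

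Your Step~1 is correct (the computation $\alpha_{T(x,y);n}=\alpha_{x,y;n+1}$ and the resulting unitary equivalence $\sigma(\mathcal{E}^{[1,N]}_{x,y;\hat\beta,\hat\gamma})=e(-2\omega)\sigma(\mathcal{E}^{[0,N-1]}_{x,y;\beta,\gamma})$ is the right finite-volume reading of Proposition~\ref{prop:rotatealphas}), but Steps~2 and~3 contain genuine problems.
First, the per-step matching error you quote, $N^{-(1+\sigma)}$, cannot support your final conclusion. Your final statement requires $\|\theta_n^N-\vartheta^N+2n\omega\|\le N^{-(1+\sigma)}$ for all but $N^{1-\sigma}$ indices, but composing $L$ steps of a $\phi$-chain accumulates error $L\cdot N^{-(1+\sigma)}$, which for $L\sim N$ is $N^{-\sigma}$, an order of $N$ too large. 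The same weakness kills the cycle exclusion: a cycle of length $L$ only forces $\|2L\omega\|\le L N^{-(1+\sigma)}$, which contradicts $\|2L\omega\|\gtrsim L^{-\tau}$ only for $L\lesssim N^{(1+\sigma)/(1+\tau)}$; longer cycles are not excluded. Both of these could in principle be repaired by replacing $N^{-(1+\sigma)}$ with $N^{-\eta}$ for $\eta$ large (and the Green's function machinery of Section~\ref{sec:greendecays} in fact delivers that, which is what the paper uses), but as written the quantitative input is too weak for the quantitative output.
Second, and more seriously, the merging step is unsubstantiated. With a near-bijection defined on $|D|\ge N-N^{1-\sigma}$ indices one obtains up to $N^{1-\sigma}$ disjoint chains of average length $N^\sigma$, and there is nothing forcing them to share a common phase $\vartheta$: two disjoint arithmetic-progression chains with different offsets would still be compatible with the Wegner estimate. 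The sentence ``iterating the spectral comparison for the shifted windows $\mathcal{E}^{[k,N+k-1]}$ with $|k|\le N^{1-2\sigma}$ forces all but $O(N^{1-\sigma})$ vertices onto a single distinguished path'' is not an argument: the window comparisons all produce essentially the same matching and give no mechanism for gluing chains. This is precisely the difficulty the paper's direct-shift construction is designed to avoid — it never produces chains that would have to be merged.
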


In order to see how this implies Theorem~\ref{thm:main4},
we need to introduce some more notation related to the
Laplace functional. Given $N$ points $x_1^N, \dots, x_N^N \in \T$,
we define for $\theta\in\T$
\be
 \left[-\frac{1}{2},\frac{1}{2}\right) \ni x_n^N(\theta)
   = x_n^N - \theta \pmod{1}.
\ee
Then their Laplace functional is defined by
\be
 \mathfrak{L}_{x^N, N}(f) = \int_{\T} \exp\left(-\sum_{n=1}^{N}
   f(N x_n^N(\theta))\right) d\theta
\ee
where $f$ is a continuous, compactly supported, and positive
function. If $\ul{x} = \{ \{x_n^{N}\}_{n=1}^{N} \}_{N=1}^{\infty}$
is a sequence of vectors, we denote
\be
 \mathfrak{L}_{\ul{x}, N}(f) = \mathfrak{L}_{x^{N}, N}(f).
\ee
Theorem~\ref{thm:main4} follows by applying
(iv) of the next lemma to the sequences
\be
 \ul{\theta} = \{\{\theta_n^{N}\}_{n=1}^{N}\}_{N=1}^{\infty},\quad
 \ul{\vartheta} = \{\{\vartheta^{N} - 2 n\omega\}_{n=1}^{N}\}_{N=1}^{\infty}
\ee

\begin{lemma}
 Let $f:\R\to\R$ be a positive, continuous, and
 compactly supported function, 
 $\ul{x} = \{ \{x_n^{N}\}_{n=1}^{N}\}_{N=1}^{\infty}$
 and $\ul{y}$ be sequences of vectors in $\T$.
 \begin{enumerate}
  \item Let $c > 0$ and $A > 1$, then
   \be
    |\{\theta\in\T:\quad\#\{1\leq n\leq N:\quad N x_n^N(\theta)\in
     [-c,c]\} \geq A \}| \leq \frac{2c}{N}.
   \ee
  \item If $\max_{1\leq n\leq N} N \|x_n^{N} - y_n^{N}\| \to 0$
   then
   \be
    |\mathfrak{L}_{\ul{x},N}(f) - 
     \mathfrak{L}_{\ul{y},N}(f)| \to 0.
   \ee
  \item If
   \be
    \frac{1}{N} \#\{1\leq n \leq N:\quad x_n^{N} \neq y_n^{N}\} \to 0
   \ee
   then
   \be
    |\mathfrak{L}_{\ul{x},N}(f) - 
     \mathfrak{L}_{\ul{y},N}(f)| \to 0.
   \ee
  \item If for every $\eps > 0$
   \be
    \frac{1}{N} \#\{1\leq n \leq N:\quad \|x_n^{N} - y_n^{N}\| \geq \frac{\eps}{N}\} \to 0
   \ee
   then
   \be
    |\mathfrak{L}_{\ul{x},N}(f) - 
     \mathfrak{L}_{\ul{y},N}(f)| \to 0.
   \ee
 \end{enumerate}
\end{lemma}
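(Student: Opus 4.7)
The plan is to reduce parts (ii)--(iv) to the pointwise inequality $|e^{-s}-e^{-t}|\leq |s-t|$ for $s,t\geq 0$, which yields
\[
 |\mathfrak{L}_{\ul{x},N}(f) - \mathfrak{L}_{\ul{y},N}(f)| \leq
 \int_{\T} \sum_{n=1}^{N} |f(N x_n^N(\theta)) - f(N y_n^N(\theta))|\, d\theta.
\]
Fix $c>0$ with $\mathrm{supp}(f)\subseteq[-c,c]$ and let $\omega_f$ be the modulus of continuity of $f$. The central observation I will use repeatedly is that $f(Nx_n^N(\theta))$ is nonzero only on the $\theta$-set $\{|x_n^N - \theta|\leq c/N\}$, which has Lebesgue measure $2c/N$; so although the sum has $N$ terms, each summand is supported on a $\theta$-set of measure $O(1/N)$, which is what keeps the total error bounded.

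For (i), Fubini gives $\int_{\T} \#\{1\leq n\leq N : Nx_n^N(\theta)\in[-c,c]\}\,d\theta = 2c$, and Markov's inequality produces a bound of the claimed shape (the stated denominator $N$ looks like a typographical slip for $A$, but this does not affect what (i) is used for). For (ii), let $\delta_N = \max_n N\|x_n^N - y_n^N\|\to 0$. For each fixed $n$ I split the $\theta$-integral into an ``interior'' region, where both $Nx_n^N(\theta)$ and $Ny_n^N(\theta)$ lie in $[-c,c]$ (contribution $\leq \omega_f(\delta_N)\cdot 2c/N$), and a ``boundary strip'' of $\theta$-measure $\leq 4\delta_N/N$ on which the difference is bounded crudely by $\|f\|_\infty$. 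Summing over $n$ yields $\int_{\T}\Delta\,d\theta \leq 2c\,\omega_f(\delta_N)+4\delta_N\|f\|_\infty\to 0$. For (iii), only indices $n$ in $D_N := \{n:x_n^N\neq y_n^N\}$ contribute, and each such term integrates to at most $8c\|f\|_\infty/N$, giving a total $\lesssim |D_N|/N\to 0$.

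For (iv), the plan is to interpolate between the two. For each $\eps>0$, partition $\{1,\dots,N\}$ into $S_\eps=\{n:\|x_n^N-y_n^N\|<\eps/N\}$ and its complement $S_\eps^c$. On $S_\eps$ the argument from (ii) applies with $\delta_N$ replaced by $\eps$ and gives a contribution $\leq 2c\,\omega_f(\eps)+4\eps\|f\|_\infty$ \emph{uniformly in $N$}; on $S_\eps^c$ the argument from (iii) gives $\leq 8c\|f\|_\infty\cdot |S_\eps^c|/N$, which tends to $0$ by the hypothesis of (iv). Letting $N\to\infty$ first and then $\eps\to 0$ completes the proof.

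The only real technical point throughout is the $N$ versus $1/N$ balance: the sum has $N$ terms, so any estimate that ignores the compact support of $f$ loses at once. The main obstacle is therefore, in (iv), to make the interior/boundary split from (ii) quantitative enough that the interior error is controlled by $\omega_f(\eps)$ and the boundary-strip error by $\eps\|f\|_\infty$, both \emph{uniformly in $N$}, so that only the ``far'' indices in $S_\eps^c$ need the asymptotic vanishing input from the hypothesis.
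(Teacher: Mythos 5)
Your proof is correct, and it reaches the same conclusion by a noticeably different route. The paper's argument for (ii) works pointwise in $\theta$: it invokes (i) to carve out a good set $I$ of $\theta$'s on which at most $A$ indices are ``active'' (i.e.\ land in $\mathrm{supp}(f)$), bounds the difference of the two sums on $I$ by $A\cdot\omega_f(\delta)$ via uniform continuity, and throws away the small exceptional set $\T\setminus I$ with the trivial bound. You instead apply $|e^{-s}-e^{-t}|\le|s-t|$ once, swap sum and integral, and estimate each term $\int_\T|f(Nx_n^N(\theta))-f(Ny_n^N(\theta))|\,d\theta$ directly by splitting into an intersection region (of measure $O(1/N)$, controlled by $\omega_f$) and a symmetric-difference strip (of measure $O(\delta_N/N)$, controlled by $\|f\|_\infty$); (i) is never used. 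For (iv) the paper diagonalizes to extract a single sequence $\eps_N\to 0$, then interpolates via a modified sequence $\widetilde{x}^N$ and applies (ii) and (iii) to the two halves; you instead fix $\eps$, run the (ii)- and (iii)-type estimates on $S_\eps$ and $S_\eps^c$, take $\limsup_N$ and then $\eps\to0$, which avoids the diagonalization step. Both proofs hinge on the same two facts (the $\theta$-support of each summand has measure $O(1/N)$, and $f$ is uniformly continuous); your Fubini-based version is arguably more mechanical and keeps the $N$-versus-$1/N$ bookkeeping completely explicit, while the paper's version gives a uniform-in-$\theta$ statement on the good set, which is a slightly stronger intermediate conclusion. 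Your observation that the $N$ in part (i) is a typo for $A$ is correct (the paper's own proof of (i) is a Markov inequality, which produces a $1/A$), and your constant $8c\|f\|_\infty/N$ in (iii) is safe but can be sharpened to $4c\|f\|_\infty/N$; neither affects the argument.
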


\begin{proof}[Proof of (i)]
 Follows from 
 \[
  \int_{-\frac{1}{2}}^{\frac{1}{2}}
   \sum_{n=1}^{N} \chi_{\left[-\frac{c}{N}, \frac{c}{N}\right]}
   (x_n^{N}(\theta)) d\theta = 2 c.
 \]
 and Markov's inequality.
\end{proof}

\begin{proof}[Proof of (ii)]
 Let $\eps > 0$.
 Since $f$ is compactly supported, we have $\mathrm{supp}(f)
 \subseteq [-c,c]$. Let $A = \lceil \frac{2c}{10 \eps} \rceil$.
 By (i), there exists a set $I \subseteq \T$ such that
 for $\theta \in I$
 \[
  \#\{1\leq n \leq N:\quad N x_n(\theta) \in [-c,c]
   \text{ or } N y_n(\theta) \in [-c,c] \} \leq A
 \]
 and $|\T \setminus I| \leq \frac{\eps}{2}$.
 By assumption, $f$ is uniformly continuous, so there
 exists a $\delta > 0$ such that $|f(x) - f(y)| \leq \frac{\eps}{2 A}$
 for $|x-y| < \delta$. Choose $N$ so large that
 \[
  \max_{1\leq n \leq N} N \|x_n^{N} - y_n^{N}\| < \delta.
 \]
 Then we clearly have that $|N x_n(\theta) - N y_n(\theta)| < \delta$,
 and thus that for $\theta \in I$.
 \[
  \left|\sum_{n=0}^{N-1} f(N x_n(\theta))
   - \sum_{n=0}^{N-1} f(N y_n(\theta))\right| < \frac{\eps}{2}.
 \]
 The claim follows.
\end{proof}

\begin{proof}[Proof of (iii)]

 (ii) follows from the set of $\theta$ for which
 \[
  \sum_{n=1}^{N} f(N y_n^N(\theta)) \neq
  \sum_{n=1}^{N} f(N x_n^N(\theta))
 \]
 having vanishing measure as $N\to\infty$.
\end{proof}

\begin{proof}[Proof of (iv)]
 By assumption, there exists $\eps_N \to 0$ such that
 \[
  \frac{1}{N} \#\{1\leq n\leq N:\quad \|x_n^{N} - y_n^{N} \| 
   \geq \frac{\eps_N}{N} \} \to 0.
 \]
 Define
 \[
  \ti{x}_n^{N} = \begin{cases} y_n^{N},& \|x_n^{N} - y_n^{N}\| 
   \geq \frac{\eps_N}{N};\\
   x_n^N, & \text{otherwise}.\end{cases}
 \]
 Then $x^N$ and $\ti{x}^N$ satisfy the assumptions of (i)
 and $\ti{x}^N$ and $y^N$ the ones of (ii). The claim follows.
\end{proof}

We now begin the proof of Theorem~\ref{thm:evskew}.
\eqref{eq:conddiop} implies that there exists
some $c > 0$ such that
\be\label{eq:conddiop2}
 \|q \omega\| \geq \frac{c}{q^{\tau}}
\ee
for all positive integers $q$. The following
theorem will be essential to our proof and
proven only in the next section.

\begin{theorem}\label{thm:existtest}
 There is a constant $\sigma \in (0,1)$.
 Let $\eta \geq 1$, $N$ sufficiently large,
 $\beta,\gamma\in\partial\D$
 and $x,y\in\T$. There exists a normalized
 $\psi \in \ell^2(\{0,\dots,N-1\})$ such that
 $\psi(n) = 0$ for $n \geq N^{\sigma}$, $n =0,1$ 
 and $z = \E^{2\pi\I \vartheta}$ such that
 \be
  \|(\mathcal{E}^{[0,N-1]}_{x,y;\beta,\gamma} - z) \psi\|
   \leq \left(\frac{1}{N}\right)^{\eta}.
 \ee
\end{theorem}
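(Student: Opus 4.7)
The plan is to produce $\psi$ as an exponentially localized eigenvector of a \emph{short block} CMV matrix on some window $[a,a+n-1]\subset\{2,\dots,\lfloor N^\sigma\rfloor\}$, and then extend it by zero to $\{0,\dots,N-1\}$; the defect $(\mathcal E^{[0,N-1]}_{x,y;\beta,\gamma}-z)\psi$ will then be concentrated on the two boundary layers of the window and controlled by the exponential decay of the block eigenvector there.

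First I would fix the scale. By Theorem~\ref{thm:lyappos} the Lyapunov exponent equals $L(z)=-\tfrac12\log(1-|\lambda|^2)>0$ uniformly in $z\in\partial\D$, so I can pick $m=-\tfrac14\log(1-|\lambda|^2)>0$ once and for all and set $n=\lceil 3(\eta+1)m^{-1}\log N\rceil$. I then subdivide $\{2,\dots,\lfloor N^{\sigma_0}\rfloor\}$ into $\lfloor N^{\sigma_0}/n\rfloor$ consecutive blocks of length $n$, where $\sigma_0<\sigma$ will be chosen at the end.

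The second and main step is to find one such block on which the restricted CMV matrix possesses an exponentially localized eigenvector. Here I invoke the refined Green's function estimate proved in Section~\ref{sec:greendecays}: the Diophantine hypothesis \eqref{eq:conddiop}, combined with the quantitative recurrence for the skew-shift from Appendix~\ref{sec:dynskew}, upgrades the probabilistic decay of Theorem~\ref{thm:uegreen} to a deterministic statement along the orbit $\{T^a(x,y):a\in\Z\}$, so that at least one of the polynomially many candidate blocks $[a,a+n-1]$ admits boundary data $\beta',\gamma'\in\partial\D$ and an eigenvalue $z=e^{2\pi i\vartheta}$ of $\mathcal E^{[a,a+n-1]}_{\beta',\gamma'}$ whose normalized eigenvector $\psi_0$ satisfies
\be
|\psi_0(a+j)|\le e^{-m\min(j,\,n-1-j)},\qquad 0\le j\le n-1.
\ee
The conversion from Green's function decay to eigenvector decay uses Proposition~\ref{prop:greenaspolynomial}: eigenvalues of the block are exactly the zeros of $\varphi^{[a,a+n-1]}_{\beta',\gamma'}(z)$, and at such a zero the numerators $\varphi^{[a,k-1]}_{\beta',\bullet}$, $\varphi^{[\ell+1,a+n-1]}_{\bullet,\gamma'}$ inherit the transfer-matrix bounds, forcing $\psi_0$ to decay from both edges at rate close to $L(z)$.

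The last step is to extend and estimate the defect. Let $\psi(k)=\psi_0(k)$ on $[a,a+n-1]$ and $\psi(k)=0$ elsewhere; since $a\ge 2$ and $a+n-1\le N^\sigma$, the support conditions $\psi(0)=\psi(1)=0$ and $\mathrm{supp}(\psi)\subset\{2,\dots,\lfloor N^\sigma\rfloor\}$ hold. By Lemma~\ref{lem:tridiag} the matrix $z\mathcal L^*-\mathcal M$ is tridiagonal, so $\bigl((z\mathcal L^*-\mathcal M)-(z(\mathcal L^{[0,N-1]}_{\beta,\gamma})^*-\mathcal M^{[0,N-1]}_{\beta,\gamma})\bigr)\psi$ vanishes except at the four boundary sites $\{a,a+1,a+n-2,a+n-1\}$, where it is bounded by a constant times the boundary values of $\psi_0$, hence by $4e^{-m(n-2)/2}\le N^{-\eta}$ for our choice of $n$; since $\mathcal L^{[0,N-1]}_{\beta,\gamma}$ is unitary the same bound passes to $\|(\mathcal E^{[0,N-1]}_{x,y;\beta,\gamma}-z)\psi\|$. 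Choosing $\sigma$ to accommodate both $\sigma_0$ and the polynomial loss from the Diophantine exponent $\tau$ completes the construction.

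The main obstacle is Step~2: Theorem~\ref{thm:uegreen} only asserts that a \emph{random} block is good with large $\mu$-measure, whereas I need a specific good block on the \emph{deterministic} orbit through $(x,y)$, with polynomial control on where it occurs. This is exactly where the Diophantine hypothesis \eqref{eq:conddiop} is used, via the quantitative recurrence of the skew-shift, to turn the ergodic a.e.\ statement into an effective one at polynomial scale; the exponent $\tau$ in \eqref{eq:conddiop} propagates into the final constant $\sigma$.
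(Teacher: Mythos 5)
Your Step~2 --- finding a short block $[a,a+n-1]$ with $n\sim\log N$ that possesses an exponentially localized eigenvector --- is where the argument breaks, and the mechanism you propose for it actually works against you. The estimates that Section~\ref{sec:greendecays} supplies (via Theorem~\ref{thm:uegreen} and the quantitative recurrence of the skew-shift) assert that for a fixed $z$ and for suitable blocks and boundary conditions, the finite-volume Green's function $G^{[a,b]}_{\beta',\gamma'}(z;\cdot,\cdot)$ is \emph{small}. By Proposition~\ref{prop:greenaspolynomial}, smallness of the Green's function means precisely that the denominator $\varphi^{[a,b]}_{\beta',\gamma'}(z)$ dominates the numerators, i.e.\ that $z$ is \emph{far} from the spectrum of $\mathcal{E}^{[a,b]}_{\beta',\gamma'}$. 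You cannot use such a non-resonance bound to locate an eigenvalue $z$ of that same block: the hypothesis you want to invoke is an anti-resonance condition that pushes $z$ away from being an eigenvalue of the block you apply it to. Independently, the claim that "the numerators inherit the transfer-matrix bounds, forcing $\psi_0$ to decay" is not justified: Proposition~\ref{prop:proplyap}(iii) gives a uniform \emph{upper} bound $\|T_{x;n}(z)\|\le \mathrm{e}^{(L(z)+\eps)n}$, but there is no uniform \emph{lower} bound on the growth of the specific vector $T^{[a,j]}(z)\binom{1}{\overline{\beta'}}$ available from the paper's tools. Without that lower bound the normalized eigenvector of a short block has no reason to be small at both edges; it could just as well concentrate at one end. (Large-deviation/avalanche-principle machinery in the style of Bourgain--Goldstein would supply such a lower bound, but that machinery is not developed here.)

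The paper sidesteps both difficulties by inverting the logic. Set $L=\lfloor N^\sigma/3\rfloor$ (so the ambient block $[-L,L]$ has polynomial size, not $\log N$), and pick an eigenpair $(z,\psi)$ of $\mathcal{E}^{[-L,L]}_{x,y;\beta,\gamma}$ with $|\psi(0)|^2\ge (2L+1)^{-1}$ --- this exists for free because the eigenvectors form an orthonormal basis. Then $z$ is fixed, and Theorem~\ref{thm:greendecays} is applied to \emph{sub}-blocks $[k-M,k+M]$, with $M=\lfloor L^\eta\rfloor$, centered at $k_\pm$ in the middle thirds of $[-L,L]$ --- well away from the site $0$ where $\psi$ is large. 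There is no clash: $z$ is close to the spectrum of the big block but far from the spectra of these small sub-blocks, which is exactly what the Green's function decay encodes. Lemma~\ref{lem:green2sol} converts the sub-block Green's function bound $|G^{[k-M,k+M]}|\le M^{-1}$ into $|\psi(\ell)|\le 4/M$ near $k_\pm$; crucially, the paper then \emph{iterates} this $C$ times (marching inward on nested windows $\mathcal K_t$) to reach $|\psi(k_\pm)|\le (4/M)^C$, and only then truncates $\psi$ to $[k_-,k_+]$. The final error is polynomial, $(8/L)^{C\eta}$, matching the claimed $N^{-\eta}$ upon choosing $C$ large. Your truncation/defect estimate in Step~3 is fine; what needs to change is the source of decay: replace the short-block eigenvector by the big-block eigenvector with mass at the center, and replace the one-shot transfer-matrix argument by the iterated Green's function bound.
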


Define $\vartheta_k = \vartheta - 2  \omega k$ and
$z_k = e(\vartheta_k)$.

\begin{lemma}
 Let $\eps > 0$.
 If \eqref{eq:conddiop2} holds, then for
 $N$ large enough and $k \neq \ti{k} \in \{0, \dots, N-1\}$
 we have
 \be
  |z_k - z_{\ti{k}}| \geq \frac{1}{N^{\tau + \eps}}.
 \ee
\end{lemma}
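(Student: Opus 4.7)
The plan is to reduce the bound to a direct Diophantine estimate on $\|2q\omega\|$ where $q = |k - \tilde k|$. Since $z_k = e(\vartheta - 2\omega k)$, factoring out the common phase $e(\vartheta - 2\omega \tilde k)$ gives
\begin{equation*}
 |z_k - z_{\tilde k}| = |1 - e(-2\omega(k - \tilde k))| = 2 |\sin(\pi \cdot 2\omega(k - \tilde k))|.
\end{equation*}
Using the elementary inequality $|\sin(\pi t)| \geq 2 \|t\|$ (distance to the nearest integer) for all real $t$, this yields
\begin{equation*}
 |z_k - z_{\tilde k}| \geq 4 \|2\omega(k-\tilde k)\|.
\end{equation*}

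Set $q = |k - \tilde k|$, so $1 \leq q \leq N-1$. Then $\|2\omega(k-\tilde k)\| = \|(2q)\omega\|$. Apply \eqref{eq:conddiop2} with the positive integer $2q$:
\begin{equation*}
 \|(2q)\omega\| \geq \frac{c}{(2q)^{\tau}} = \frac{c}{2^{\tau}} \cdot \frac{1}{q^{\tau}} \geq \frac{c}{2^{\tau}} \cdot \frac{1}{N^{\tau}}.
\end{equation*}
Combining the two displays, $|z_k - z_{\tilde k}| \geq 4 c \cdot 2^{-\tau} N^{-\tau}$. Since $\varepsilon > 0$, for $N$ large enough we have $4c \cdot 2^{-\tau} \geq N^{-\varepsilon}$, which gives the claimed bound $|z_k - z_{\tilde k}| \geq N^{-(\tau + \varepsilon)}$.

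There is no real obstacle here: the statement is a one-line consequence of the Diophantine hypothesis once one observes the obvious factorization of $z_k - z_{\tilde k}$ and the standard lower bound on $|1 - e(t)|$. The only mild point to check is that the constant $c$ supplied by \eqref{eq:conddiop2} is independent of $q$, so the factor $2^{-\tau}$ incurred by replacing $q$ with $2q$ is absorbed into the $N^{-\varepsilon}$ slack.
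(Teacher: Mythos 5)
Your proof is correct and supplies exactly the straightforward computation the paper dismisses as ``Clear'': reduce to $\|2q\omega\|$ with $q=|k-\tilde k|\leq N-1$, lower-bound $|1-e(t)|\geq 4\|t\|$, and invoke \eqref{eq:conddiop2}, absorbing the constant $4c\,2^{-\tau}$ into the $N^{-\eps}$ slack. This is the same (and only sensible) route.
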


\begin{proof} Clear. \end{proof}

\begin{proof}[Proof of Theorem~\ref{thm:evskew}]
 We let $\eta = \tau + 2\eps$ in Theorem~\ref{thm:existtest}.
 With $u_{n}$ the appropriate factors as given
 in Lemma~\ref{lem:rotatealpha}, we define the test
 functions
 \[
  \psi_k(n) = \begin{cases} u_{n} \psi(n - k), &k\leq n\leq k+N^{\sigma}\\
   0, &\text{otherwise}. \end{cases}
 \]
 We then have for $0 \leq k \leq N - N^{\sigma}$ that
 \[
  \|(\mathcal{E}_{x,y;\beta,\gamma}^{[0,N-1]}-z_k) \psi_k\|
   \leq \frac{1}{N^{\eta}}.
 \]
 Hence, there is some eigenvalue $\E^{2\pi\I\theta_{\ell_k}}$
 such that
 \[
  \|\theta_{\ell_{k}} - \vartheta_k\| \leq \frac{1}{N^{\eta}}.
 \]
 By the previous lemma, we must have $\ell_k \neq \ell_{\ti{k}}$
 for $k \neq \ti{k}$. The claim then follows upon
 reordering the $\theta_{\ell}$.
\end{proof}

%
%
%

\section{Proof of Theorem~\ref{thm:existtest}}
\label{sec:existtest}

Let $L = \lfloor \frac{1}{3} N^{\sigma}\rfloor$.
If we show that for every $(x,y)\in\T^2$, there exists
a normalized vector $\psi$ and $z\in\partial\D$
such that
\be
 \|(\mathcal{E}^{[-L,L]}_{x,y; \beta,\gamma} - z) \psi\|
  \leq \frac{1}{N^C}
\ee
then Theorem~\ref{thm:existtest} follows. We will
show this modified claim, since it is notationally
somewhat simpler to deal with.

Since $\mathcal{E}^{[-L,L]}_{x,y; \beta,\gamma}$ has $2 L + 1$ eigenvalues,
there exists $z\in\D$ and $\|\psi\| = 1$ such that
\be
 \mathcal{E}^{[-L,L]}_{x,y; \beta,\gamma} \psi = z \psi,
  \quad |\psi(0)|^2 \geq \frac{1}{2L+1}.
\ee
We will prove in the following section

\begin{theorem}\label{thm:greendecays}
 There exists $\eta > 0$ such that for every $C \geq 1$,
 we have for $L$ large enough and $M = \lfloor L^{\eta} \rfloor$
 that there exist
 \be
  -\frac{2}{3} L \leq k_- \leq -\frac{1}{3} L,\quad
  \frac{1}{3} L \leq k_+ \leq \frac{2}{3} L
 \ee
 such that for 
 \be
  k \in \{k_- - C M, \dots, k_- + C M\}
   \cup \{k_+ - CM, \dots, k_+ + C M\}
 \ee
 we have that there
 exist $\beta, \gamma \in \partial\D$ such that
 for $|k - \ell| \leq \frac{M}{2}$ we have
 \be
  |G^{[k-M, k+M]}_{x,y;\beta,\gamma}(z; \ell, k-M)|,\ 
  |G^{[k-M, k+M]}_{x,y;\beta,\gamma}(z; \ell, k+M)| \leq \frac{1}{M}.
 \ee
\end{theorem}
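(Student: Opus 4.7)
The plan is to combine Theorem~\ref{thm:uegreen} applied at an intermediate scale $\sim M$ with quantitative equidistribution for the Diophantine skew-shift from Appendix~\ref{sec:dynskew}. Structurally the argument mirrors Bourgain--Goldstein--Schlag style Green's function estimates: first establish the decay bound on a ``good'' set of phases in $\T^2$ of large $\mu$-measure, then use quantitative recurrence to show that the orbit $\{T^j(x,y)\}$ enters this set with high density.

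First, I would apply Theorem~\ref{thm:uegreen} at the scale $n = 3M+1$ (so that the middle third covers all of $[k-M/2, k+M/2]$). This produces a set $\Omega^{(M)} \subset \T^2$ of measure $\geq 1 - \delta_M$ such that for $(x',y') \in \Omega^{(M)}$ there exist boundary conditions $\beta_{(x',y')}, \gamma_{(x',y')} \in \partial\D$ giving exponential---hence certainly $\leq 1/M$---decay of the Green's function on the corresponding box of length $3M+1$. By the covariance of the CMV matrices under the skew-shift, the required decay at the box $[k-M, k+M]$ relative to the initial phase $(x,y)$ is equivalent to $T^{k-M}(x,y) \in \Omega^{(M)}$. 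Thus the question reduces to finding $k_\pm$ in the prescribed intervals such that $T^{k-M}(x,y) \in \Omega^{(M)}$ for \emph{every} $k$ in the window $\{k_\pm - CM, \dots, k_\pm + CM\}$, i.e., locating a stretch of length $2CM+1$ lying entirely in the set of good indices.

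Next, I would invoke quantitative unique ergodicity from Appendix~\ref{sec:dynskew}: for $\omega$ satisfying \eqref{eq:conddiop}, the density of $j$ with $T^j(x,y) \notin \Omega^{(M)}$ is bounded by $\delta_M + L^{-\alpha}$ on any sub-interval of $[-L,L]$ of length $\gtrsim L/3$, for some $\alpha = \alpha(\tau) > 0$. Partitioning $[L/3, 2L/3]$ (and symmetrically $[-2L/3,-L/3]$) into $\sim L/(6CM)$ disjoint windows of length $2CM+1$, at most $(\delta_M + L^{-\alpha})L/3$ of these windows can contain a bad index. Choosing $M = \lfloor L^\eta \rfloor$ with $\eta > 0$ small enough that $\delta_M + L^{-\alpha} \ll (CM)^{-1}$ leaves at least one entirely good window in each interval, from which $k_\pm$ is extracted. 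For every $k$ in the chosen window, setting $\beta = \beta_{T^{k-M}(x,y)}$ and $\gamma = \gamma_{T^{k-M}(x,y)}$ then delivers the desired Green's function bound.

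The main obstacle is that Theorem~\ref{thm:uegreen} as stated is only qualitative in $\delta$: it does not supply any explicit rate for $\delta_M \to 0$, and without such a rate the pigeonhole step fails. I therefore expect Section~\ref{sec:greendecays} to refine Theorem~\ref{thm:uegreen} by inputting the explicit polynomial equidistribution rate for the skew-shift from Appendix~\ref{sec:dynskew} in place of Furman's abstract theorem, using Theorem~\ref{thm:lyappos} (positivity and explicit value of $L(z)$) to convert a bound on the fraction of phases violating the Lyapunov limit into a polynomial bound $\delta_M \leq M^{-\alpha''}$. Once both exponents $\alpha$ and $\alpha''$ are available, calibrating $\eta$ small against them---and ultimately against the Diophantine exponent $\tau$---yields the stated theorem.
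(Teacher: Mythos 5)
Your high-level plan --- use Green's function decay at a mesoscopic scale $\sim M$ for ``good'' phases, then use quantitative recurrence of the skew-shift to locate good windows --- is on the right track, and you correctly flag that Theorem~\ref{thm:uegreen} is only qualitative in $\delta$. But the step where you ``invoke quantitative unique ergodicity'' to conclude that the density of iterates $j$ with $T^j(x,y)\notin\Omega^{(M)}$ is $\lesssim \delta_M + L^{-\alpha}$ does not follow: discrepancy-type bounds for the skew-shift are geometry-dependent, and a general measurable set of measure $\delta_M$ can be badly distributed relative to a fixed orbit. Without structural information about $\Omega^{(M)}$, knowing only its measure gives no control on the number of orbit points that miss it. Your proposed remedy (make Theorem~\ref{thm:uegreen} quantitative by replacing Furman's theorem with an explicit rate for $(1/n)\log\|T_{x;n}\|$) attacks the wrong bottleneck and is in fact not what the paper does.

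The paper fills this gap by a different device, in two moves that are absent from your proposal. First, a Wegner-type estimate (Theorem~\ref{thm:wegneresti}) gives a \emph{quantitative} bound $\|(z\mathcal{L}^*-\mathcal{M})^{-1}\|\le N^{1+\sigma}$ outside a set of measure $\le C N^{-\sigma}$; this is the only place a polynomial rate enters, and it comes from rotational invariance, not from sharpening Furman. Second, a stability (perturbation) lemma: via the resolvent identity $B^{-1}-A^{-1}=B^{-1}(A-B)A^{-1}$ and the Wegner bound, if $(x,y)$ lies in the good set then so does every $(\tilde x,\tilde y)$ within a polynomial-size neighborhood ($|\tilde x - x|\le N^{-2(1+2\sigma)}$, $|\tilde y-y|\le N^{-(1+2\sigma)}$). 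This upgrades the qualitative good set $\Omega_N$ to one that is essentially a union of explicit boxes, and \emph{that} is the structure needed for Theorem~\ref{thm:skewergodic2} to count orbit hits box-by-box. The bad set is then $\lesssim \delta_N$ of the total number $\sim N^{3(1+\sigma)}$ of boxes, each contributing $\lesssim L/(N\cdot N^{3(1+\sigma)})$ iterates, so the bad iterates number $\lesssim \delta_N L/N$, and a pigeonhole over windows of length $3C$ finishes. Crucially, $\delta_N\to0$ may remain qualitative because $C$ is fixed before $N$ (and hence $L$) is taken large. You should add the Wegner estimate and the stability lemma to your argument; without them your discrepancy step has no justification.
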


Define
\be
 \mathcal{K}_{t} = 
  \{k_- - t M, \dots, k_- + t M\}
   \cup \{k_+ - t M, \dots, k_+ + t M\}.
\ee
Using Lemma~\ref{lem:green2sol} combined
with the estimate from the previous theorem, we
can conclude for $k \in K_{C}$ and $|\ell -k| \leq \frac{M}{2}$
that
\be
 |\psi(\ell)| \leq \frac{4}{M},
\ee
where we used the trivial estimate $|\psi(n)|\leq 1$

We can iterate this to obtain for $s = 1,\dots, C$ that
for $k \in K_{C-s+1}$ and $|\ell -k| \leq \frac{M}{2}$
\be
 |\psi(\ell)| \leq \left(\frac{4}{M}\right)^s.
\ee
In particular, we obtain that
\be
 |\psi(k_-)|, |\psi(k_+)| \leq \left(\frac{4}{M}\right)^C.
\ee
Define a test function $\varphi$ by
\be
 \varphi(n) = \begin{cases} \psi(n), & k_- \leq n \leq k_+; \\
  0,&\text{otherwise}.\end{cases}
\ee
We have that
\be
 \|(\mathcal{E}^{[-L,L]}_{x,y;\beta,\gamma} - z) \varphi\| 
  \leq  \left(\frac{8}{L}\right)^{C \eta}
\ee
and thus Theorem~\ref{thm:existtest} follows.

%
%
%

\section{Decay of the Green's function: 
  Proof of Theorem~\ref{thm:greendecays}}
\label{sec:greendecays}

Theorem~\ref{thm:uegreen} states that
$m = L(z) > 0$ implies that for $N$ large enough
there exists a set $B_N \subseteq\T^2$ with
\begin{enumerate}
 \item $|B_N|\to 0$.
 \item For $(x,y) \in \T^2\setminus B_N$, there exists
  \be
   \beta \in \left\{-\frac{\alpha_{-N-1}}{|\alpha_{-N-1}|},
    \frac{\alpha_{-N-1}}{|\alpha_{-N-1}|}\right\},\quad
   \gamma \in \left\{\frac{\alpha_N}{|\alpha_N|},
    -\frac{\alpha_N}{|\alpha_N|}\right\}
  \ee
  such that for $|k| \leq \frac{N}{2}$, we have
  \be
   |G^{[-N,N]}_{x,y;\beta,\gamma}(z; -N, k)| 
   \leq \E^{-\frac{m}{4} |k+N|},
   \quad
   |G^{[-N,N]}_{x,y;\beta,\gamma}(z; N, k)| 
   \leq \E^{-\frac{m}{4} |N-k|}.
  \ee
\end{enumerate}

We will first need the following lemma.

\begin{lemma}
 Let $\sigma > 0$, there exists a constant $C > 1$ such
 that for $N \geq 1$, there exists a set $B^2_N$ such
 that
 \be
  |B_N^2| \leq \frac{C}{N^{\sigma}}
 \ee
 and for
 \be
  \beta \in \left\{-\frac{\alpha_{-N-1}}{|\alpha_{-N-1}|},
   \frac{\alpha_{-N-1}}{|\alpha_{-N-1}|}\right\},\quad
  \gamma \in \left\{\frac{\alpha_N}{|\alpha_N|},
   -\frac{\alpha_N}{|\alpha_N|}\right\}
 \ee
 and $x,y\in\T^2\setminus B_N^{2}$ we have
 \be
  \left\|\left(z \left(
   \mathcal{L}^{[-N,N]}_{x,y;\beta,\gamma}\right)^{*} - 
    \mathcal{M}^{[-N,N]}_{x,y;\beta,\gamma}\right)^{-1}
     \right\|
      \leq N^{1 + \sigma}
 \ee
\end{lemma}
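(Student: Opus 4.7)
The plan is to convert the operator-norm bound into a distance-to-spectrum estimate and then invoke the Wegner-type identity of Theorem~\ref{thm:wegneresti} together with Markov's inequality.

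For boundary data $\beta,\gamma\in\partial\D$ of the prescribed form, the blocks $\mathcal{L}^{[-N,N]}_{x,y;\beta,\gamma}$, $\mathcal{M}^{[-N,N]}_{x,y;\beta,\gamma}$ and $\mathcal{E}^{[-N,N]}_{x,y;\beta,\gamma} = \mathcal{L}^{[-N,N]}_{x,y;\beta,\gamma}\mathcal{M}^{[-N,N]}_{x,y;\beta,\gamma}$ are all unitary on $\ell^2([-N,N])$. I would first factor
\[
z\bigl(\mathcal{L}^{[-N,N]}_{x,y;\beta,\gamma}\bigr)^{*} - \mathcal{M}^{[-N,N]}_{x,y;\beta,\gamma} = \bigl(\mathcal{L}^{[-N,N]}_{x,y;\beta,\gamma}\bigr)^{*}\bigl(z - \mathcal{E}^{[-N,N]}_{x,y;\beta,\gamma}\bigr),
\]
so that, by unitarity of $\mathcal{L}^{[-N,N]}_{x,y;\beta,\gamma}$ and normality of $\mathcal{E}^{[-N,N]}_{x,y;\beta,\gamma}$, the operator norm of the inverse equals $1/\dist(z,\sigma(\mathcal{E}^{[-N,N]}_{x,y;\beta,\gamma}))$. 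The desired bound $N^{1+\sigma}$ is then equivalent to the assertion that, off a small set of $(x,y)$, no eigenvalue of $\mathcal{E}^{[-N,N]}_{x,y;\beta,\gamma}$ lies in the arc $I_z$ of angular half-width $N^{-(1+\sigma)}$ around $z$.

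Next I apply Theorem~\ref{thm:wegneresti} to the arc $I_z$, after the trivial reindexing $[0,2N]\leftrightarrow[-N,N]$ permitted by translation invariance of the skew-shift family. The sign choices $\beta\in\{\pm\alpha_{-N-1}/|\alpha_{-N-1}|\}$ and $\gamma\in\{\pm\alpha_{N}/|\alpha_{N}|\}$ are precisely the Aleksandrov-type boundary phases $\beta_{x},\gamma_{x}$ appearing in Theorem~\ref{thm:wegneresti} for the four choices $\beta_{0},\gamma_{0}\in\{-1,+1\}$. For each fixed pair, the Wegner identity gives
\[
\int_{\T^{2}}\tr\bigl(P_{I_z}\,\mathcal{E}^{[-N,N]}_{x,y;\beta,\gamma}\bigr)\,dx\,dy = (2N+1)\,|I_z| \leq \frac{C}{N^{\sigma}}.
\]
Since the trace counts eigenvalues of $\mathcal{E}^{[-N,N]}_{x,y;\beta,\gamma}$ in $I_z$, Markov's inequality furnishes a set of $(x,y)$ of measure at most $C/N^{\sigma}$ outside which $\dist(z,\sigma(\mathcal{E}^{[-N,N]}_{x,y;\beta,\gamma}))\geq N^{-(1+\sigma)}$; taking the union over the four sign pairs produces $B^{2}_{N}$ at the cost of enlarging $C$ by a factor of four.

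There is no serious analytic obstacle once the reduction to a distance-to-spectrum statement is made: the heart of the argument is just the Wegner-type equality from Theorem~\ref{thm:wegneresti}, which is an equality rather than an inequality precisely because the boundary phases $\beta,\gamma$ are chosen to rotate with $(x,y)$. The only point to keep in mind is that $z$ is held fixed and $B^{2}_{N}$ depends on $z$, which is exactly what is needed when the lemma is invoked inside the proof of Theorem~\ref{thm:greendecays}.
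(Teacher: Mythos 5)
Your proof is correct and is precisely what the paper intends: the paper's proof is the single line ``This is a consequence of Theorem~\ref{thm:wegneresti},'' and your argument supplies exactly the missing bridge—factoring $z\mathcal{L}^{*}-\mathcal{M}=\mathcal{L}^{*}(z-\mathcal{E})$, using unitarity to reduce the resolvent bound to $\dist(z,\sigma(\mathcal{E}^{[-N,N]}))\geq N^{-(1+\sigma)}$, applying the Wegner identity to an arc of that width, and then Markov's inequality with a union over the four boundary-phase choices.
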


\begin{proof}
 This is a consequence of Theorem~\ref{thm:wegneresti}.
\end{proof}

In summary, we have extracted the following statement

\begin{proposition}
 Let $\sigma > 0$. For $N \geq 1$ large enough, there
 exists $\Omega_N \subseteq\T^2$ such that
 \be
  \lim_{N\to\infty} |\T^2 \setminus\Omega_N| = 0.
 \ee
 For each $(x,y) \in \Omega_N$ and
 \be
  |\ti{x} - x| \leq \frac{1}{N^{2(1+2\sigma)}},\quad
  |\ti{y} - y| \leq \frac{1}{N^{1+2\sigma}},
 \ee
 we have that there exists
 \be
  \beta \in \left\{-\frac{\alpha_{-N-1}}{|\alpha_{-N-1}|},
   \frac{\alpha_{-N-1}}{|\alpha_{-N-1}|}\right\},\quad
  \gamma \in \left\{\frac{\alpha_N}{|\alpha_N|},
   -\frac{\alpha_N}{|\alpha_N|}\right\}
 \ee
 such that for $|k| \leq \frac{N}{2}$, we have
 \be
  |G^{[-N,N]}_{\ti x, \ti y;\beta,\gamma}(z; -N, k)| \leq \frac{1}{N},
  \quad
  |G^{[-N,N]}_{\ti x, \ti y;\beta,\gamma}(z; N, k)| \leq \frac{1}{N}.
 \ee
\end{proposition}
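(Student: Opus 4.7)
My plan is as follows. First define the good set
\begin{equation*}
 \Omega_N = (\T^2 \setminus B_N) \cap (\T^2 \setminus B_N^{2}),
\end{equation*}
where $B_N$ is the exceptional set of Theorem~\ref{thm:uegreen} (after the obvious translation of $[0,2N]$ to $[-N,N]$) and $B_N^{2}$ is the exceptional set of the preceding Wegner-type lemma. Since $|B_N|\to 0$ by Theorem~\ref{thm:uegreen} and $|B_N^{2}|=O(N^{-\sigma})\to 0$, we have $|\T^2\setminus\Omega_N|\to 0$. On $\Omega_N$ one simultaneously has the pointwise decay $|G^{[-N,N]}_{x,y;\beta,\gamma}(z;\pm N,j)|\le \E^{-(m/4)|j\mp N|}$ for $|j|\le N/2$ together with the uniform bound $\|A^{-1}\|\le N^{1+\sigma}$, where $A=z(\mathcal{L}^{[-N,N]}_{x,y;\beta,\gamma})^{*}-\mathcal{M}^{[-N,N]}_{x,y;\beta,\gamma}$.

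I would next quantify the perturbation using the explicit expression $\alpha_{x,y;n}=\lambda e(y+nx+n(n-1)\omega)$. The mean value theorem and the admissible bounds on $\tilde x-x$ and $\tilde y-y$ give
\begin{equation*}
 \sup_{|n|\le N}|\alpha_{\tilde x,\tilde y;n}-\alpha_{x,y;n}|\le 2\pi|\lambda|\bigl(N|\tilde x-x|+|\tilde y-y|\bigr)=O(N^{-(1+2\sigma)}),
\end{equation*}
so by Lemma~\ref{lem:tridiag} the matrix difference $B:=\tilde A-A$ is tridiagonal with entries of the same size, whence $\|B\|=O(N^{-(1+2\sigma)})$. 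Combined with $\|A^{-1}\|\le N^{1+\sigma}$ one obtains $\|B\|\cdot\|A^{-1}\|=O(N^{-\sigma})<1/2$ for $N$ large, so a Neumann series gives $\|\tilde A^{-1}\|\le 2N^{1+\sigma}$ together with the resolvent identities $\tilde A^{-1}-A^{-1} = -A^{-1}B\tilde A^{-1} = -\tilde A^{-1}BA^{-1}$.

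It remains to show that the $(-N,k)$ and $(N,k)$ entries of this correction are bounded by $1/N$ for $|k|\le N/2$. The unperturbed Green's function entries are super-polynomially small by Theorem~\ref{thm:uegreen}, so only the correction matters. Writing $(A^{-1}B\tilde A^{-1})_{-N,k}=\langle u,Bv\rangle$ with $u_j=(A^{-1})_{-N,j}$ and $v_j=(\tilde A^{-1})_{j,k}$, the natural approach is to split $u=u_{1}+u_{2}$ according to whether $|j|\le N/2$ or not: the exponential decay from Theorem~\ref{thm:uegreen} gives $\|u_{1}\|_{\ell^{2}}=O(1)$, so the contribution of $u_{1}$ is at most $\|u_{1}\|_{2}\|B\|\|v\|_{2}=O(N^{-\sigma})$; the contribution of $u_{2}$ is reshuffled via the dual identity $\tilde A^{-1}-A^{-1}=-\tilde A^{-1}BA^{-1}$ and the exponential decay of $(A^{-1})_{N,\cdot}$ from the opposite boundary, so that the same bulk estimate can be applied from the right end. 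The estimate for $G(z;N,k)$ is then symmetric.

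The main obstacle is precisely this last step: a crude operator-norm bound gives $\|A^{-1}B\tilde A^{-1}\|=O(N)$, an $N^{2}$-loss over the target, so the two pieces of information available on $\Omega_N$---the pointwise decay from Theorem~\ref{thm:uegreen} at both boundaries and the Wegner resolvent bound---must be used in tandem and balanced against the perturbation size. The neighborhood radii $N^{-2(1+2\sigma)}$ in $x$ and $N^{-(1+2\sigma)}$ in $y$ are precisely tuned so that one factor of $\|A^{-1}\|\sim N^{1+\sigma}$ in the resolvent identity is absorbed by $\|B\|$, while the other is defeated by the exponential decay of the $(\pm N,\cdot)$ row of $A^{-1}$ on the middle third.
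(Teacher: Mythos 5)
Your setup is the right one, and you have correctly identified the two facts available on $\Omega_N$ (exponential decay of the boundary rows of $G=A^{-1}$ in the middle third, and the Wegner bound $\|A^{-1}\|\le N^{1+\sigma}$), the size of the perturbation ($\|B\|\lesssim N^{-(1+2\sigma)}$ from $|\alpha_{\tilde x,\tilde y;n}-\alpha_{x,y;n}|\lesssim N|\tilde x-x|+|\tilde y-y|$), and the resolvent identity. You also correctly diagnose that a blunt operator-norm estimate on $A^{-1}B\tilde A^{-1}$ loses a factor $N^{2}$. So far this is the same approach the paper intends, though the paper itself hides the hard step behind ``the result now follows \dots and some computations.''

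The genuine gap is precisely the $u_2$ estimate, and the fix you propose does not close it. After the split $u=u_1+u_2$, the $u_1$ part is indeed negligible (exponential decay of $G_{-N,j}$ for $|j|\le N/2$ gives $\|u_1\|_2\lesssim \sqrt{N}\,\E^{-mN/8}$). For the $u_2$ part you propose rewriting via $\tilde A^{-1}-A^{-1}=-\tilde A^{-1}BA^{-1}$ and invoking ``the exponential decay of $(A^{-1})_{N,\cdot}$ from the opposite boundary.'' But this rewriting produces terms of the form $\sum_{j,i}\tilde G_{-N,j}B_{ji}G_{ik}$, and the entries $G_{ik}$ that appear have $i$ ranging over the bulk of $[-N,N]$ (near $j$, not near $\pm N$) with $k$ in the middle third. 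Theorem~\ref{thm:uegreen} gives decay only for boundary entries $G_{\pm N,\cdot}$; it gives no control on $G_{ik}$ for generic $i$. Without that, the $u_2$ contribution is still of size $\|u_2\|_2\,\|B\|\,\|v\|_2\lesssim N^{1+\sigma}\cdot N^{-(1+2\sigma)}\cdot N^{1+\sigma}=N$, which is $N^{2}$ short of the goal.

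What is actually needed (and not supplied) is an independent argument giving a useful bound on $\|G e_{\pm N}\|_2$ and $\|G e_k\|_2$, for instance by going back to Proposition~\ref{prop:greenaspolynomial} and the quantitative lower bound on $|\varphi^{[-N,N]}_{\beta,\gamma}|$ produced inside the proof of Theorem~\ref{thm:uegreen}, combined with the uniform upper bound on transfer matrices from strict ergodicity; alternatively, a Neumann series expansion $\tilde G=\sum_{\ell\ge 0}(-GB)^\ell G$ in which the $\ell\ge 1$ terms are controlled by $\|G e_{-N}\|_2\,\|G e_k\|_2\,\|B\|\,(\|B\|\|G\|)^{\ell-1}$. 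Either way one must first show that the boundary (and middle) \emph{columns} of $G$ have bounded $\ell^2$ norm, which is not asserted anywhere in your write-up and is exactly the content of the unspoken ``some computations'' in the paper. A secondary point you leave implicit: the boundary conditions $\beta,\gamma$ in the conclusion are tied to the phases of $\alpha_{\tilde x,\tilde y;\pm(N+1)}$, not of $\alpha_{x,y;\pm(N+1)}$, so one must also argue that changing $(x,y)$ to $(\tilde x,\tilde y)$ rotates the boundary condition by a negligibly small phase.
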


\begin{proof}
 A computation shows that
 \[
  \|\mathcal{L}^{[-N,N]}_{x,y;\beta,\gamma} - 
   \mathcal{L}^{[-N,N]}_{\ti{x},\ti{y};\beta,\gamma}\|
   \lesssim N |x - \ti{x}| + |y + \ti{y}| 
    \leq \frac{1}{N^{\frac{\sigma}{2}}}
 \]
 for $N$ large enough
 and a similar result for
  $\mathcal{M}^{[-N,N]}_{\ti{x},\ti{y};\beta,\gamma}$. The result now
 follows from
 \[
  B^{-1} - A^{-1} = B^{-1} (A - B) A^{-1}
 \]
 and some computations.
\end{proof}

Let $X_N = \lceil N^{2 (1 + 2\sigma)} \rceil$,
$Y_N = \lceil N^{1+2\sigma} \rceil$. We partition
$\T^2$ into $X_N \cdot Y_N \lesssim N^{3(1+2\sigma)}$
boxes of side length $\frac{1}{X_N}$ and $\frac{1}{Y_N}$.
We call a box $I_{\ell}$ bad if
\be
 \Omega_N \cap I_{\ell} = \emptyset
\ee
and good otherwise. We note that if $(x,y)$ is in a good
box, then for $|k|\leq\frac{N}{2}$
\be
 |G^{[-N,N]}_{x,y; \beta, \gamma}(z; k, \pm N)| \leq \frac{1}{N}
\ee
for some $\beta, \gamma\in\partial\D$.
We now given an upper bound on the number of iterates
of $T^{j}(x,y)$ that land in any bad box. 
We will show the following theorem in Appendix~\ref{sec:dynskew}.
For $\eps, \delta >0$, denote by $B_{\eps,\delta}
\subseteq\T^2$
the set
\be
 B_{\eps,\delta} = \{(x,y)\in\T^2:\quad
  \|x\|\leq \eps,\ \|y\|\leq\delta\}.
\ee

\begin{theorem}\label{thm:skewergodic2}
 Assume \eqref{eq:conddiop} and let
 $\delta > 0$, $\eps > 0$, $N \geq 1$. There exists $L_0 = L_0(\sigma,\omega) \geq 1$
 such that for any $x,y\in\T$ there exists $0 \leq \ell_0 \leq N$
 such that for $L \geq L_0 \delta^{-4} \eps^{-9}$
 \be
  \#\{0 \leq \ell \leq \frac{L_0}{N}:\quad
   T^{\ell N}(x,y) \in B_{\eps, \delta}\} \leq 10 \eps \delta \frac{L}{N}.
 \ee
\end{theorem}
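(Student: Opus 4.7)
The plan is to control the count by Fourier analysis, splitting the orbit into a linear part in the first coordinate and a quadratic part in the second, and bounding the resulting exponential sums using the Diophantine condition \eqref{eq:conddiop2}. Writing $M = \lfloor L/N\rfloor$ and using the explicit formula
\[
T^{\ell N}(x,y) = (x + 2\ell N\omega,\ y + \ell N x + \ell N(\ell N-1)\omega),
\]
the first coordinate is linear in $\ell$ with slope $\ti\omega := 2N\omega$ and the second is the quadratic polynomial $\phi(\ell) = y + \ell Nx + \ell N(\ell N-1)\omega$ with leading coefficient $N^2\omega$.

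First I would take a smooth nonnegative majorant $h \geq \chi_{B_{\eps,\delta}}$ of product form $h(u,v) = h_\eps(u)\,h_\delta(v)$, supported in $B_{2\eps,2\delta}$, with $\int h \leq C\eps\delta$ and Fourier coefficients $|\hat h(m,n)|\leq C_A(1+|m|\eps)^{-A}(1+|n|\delta)^{-A}$ for any fixed $A$, built in the standard way as convolutions of interval indicators with a smooth bump. Then
\[
\#\{0\leq\ell\leq M : T^{\ell N}(x,y)\in B_{\eps,\delta}\} \leq \sum_{\ell=0}^{M} h(T^{\ell N}(x,y)) = \sum_{m,n\in\Z} \hat h(m,n)\,e(mx+ny)\,S_{m,n},
\]
where
\[
S_{m,n} = \sum_{\ell=0}^{M} e\bigl(\ell(m\ti\omega + nNx - nN\omega) + \ell^2 nN^2\omega\bigr).
\]
The zero mode contributes at most $C\eps\delta(M+1)$, already matching the required main term.

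For $(m,n)\neq(0,0)$ I would bound $S_{m,n}$ using \eqref{eq:conddiop2}. When $n=0$, $S_{m,0}$ is geometric and bounded by $2/\|m\ti\omega\|\leq C_\omega(|m|N)^{\tau}$. When $n\neq 0$, $S_{m,n}$ is a quadratic Weyl sum with leading coefficient $nN^2\omega$; one application of the Weyl--van der Corput inequality combined with \eqref{eq:conddiop2} applied to multiples of $nN^2\omega$ yields $|S_{m,n}|\leq C_\omega(|n|N)^{C\tau}M^{1-\kappa}$ for some $\kappa=\kappa(\tau)>0$. The auxiliary shift $\ell_0\in[0,N]$ from the statement enters here: one chooses $\ell_0$ so that the linear coefficient of the phase in the translated window $[\ell_0,\ell_0+M]$ is not anomalously close to an integer (there are only $O(N)$ residue classes to avoid, so some $\ell_0\leq N$ works), which allows the Weyl estimate to be applied uniformly in $(x,y)$. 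Truncating the Fourier expansion at $|m|\lesssim\eps^{-1}$, $|n|\lesssim\delta^{-1}$ and using the decay of $\hat h$ to absorb the tails, the total error from the nonzero modes is bounded by $C_\omega\eps^{-a}\delta^{-b}M^{1-\kappa}$ for explicit exponents $a,b$ depending on $\tau$.

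The threshold $L\geq L_0\eps^{-9}\delta^{-4}$ is then obtained by requiring this error to be at most $\eps\delta M$: balancing the exponents (a single van der Corput step reduces the degree from $2$ to $1$, and \eqref{eq:conddiop2} on a linear Weyl sum gives $\kappa$ of order $1/\tau$) matches the stated power of $\eps^{-1}$ and $\delta^{-1}$ after absorbing all $N$-dependent constants into $L_0=L_0(\tau,\omega)$. The main obstacle is producing a genuinely quantitative quadratic Weyl estimate that is uniform in all of $m, n, x, y$, with polynomial losses in $\eps^{-1}$ and $\delta^{-1}$ sharp enough to reach $\eps^{-9}\delta^{-4}$; the tools are standard Weyl--van der Corput theory applied to $\ti\omega$ and $N^2\omega$, but the bookkeeping of the $\tau$-dependent exponents together with the judicious choice of the shift $\ell_0$ to handle the resonant linear-part cases is where the main care is needed.
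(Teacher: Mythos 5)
Your plan is to run the Fourier/Weyl machinery directly on the $N$-step orbit $\{T^{\ell N}(x,y)\}_{\ell=0}^{M}$, which forces the leading coefficient of the quadratic Weyl sum to be $N^2\omega$. This is the point where the argument breaks: the Diophantine condition gives only $\|kN^2\omega\|\geq c(kN^2)^{-\tau}$, so the van der Corput bound on $|S_{m,n}|$ degrades polynomially in $N$, and the resulting threshold on $L$ would necessarily grow with $N$. That contradicts the statement, where $L_0 = L_0(\sigma,\omega)$ must be \emph{independent} of $N$. Your remedy of choosing $\ell_0$ to avoid resonances in the linear part of the phase cannot help here, since shifting the window $\ell\mapsto\ell+\ell_0$ does not change the quadratic leading coefficient $nN^2\omega$ at all; the resonance is in the quadratic term, not the linear one.

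The paper sidesteps this entirely. It never estimates Weyl sums along the sparse progression. Instead it applies Theorem~\ref{thm:skewergodic} to the \emph{full} orbit $\{T^{\ell}(x,y)\}_{\ell=1}^{L}$ (step size $1$, so the Selberg-majorant exponential sums carry $\omega$, not $N^2\omega$, as the leading coefficient), obtaining
\[
\#\{1\leq\ell\leq L:\ T^{\ell}(x,y)\in B_{\eps,\delta}\}\leq 5\eps\delta L + C\eps^{-5/4}L^{3/4}\leq 10\eps\delta L
\]
once $L\geq C'\delta^{-4}\eps^{-9}$, with $\sigma=\tfrac14$. Then the role you assign to a resonance-avoiding $\ell_0$ is instead played by a plain pigeonhole: partition $\{1,\dots,L\}$ into the $N$ residue classes mod $N$; since the total count is $\leq 10\eps\delta L$, some class $\ell_0$ contributes at most $\tfrac{1}{N}\cdot 10\eps\delta L$ elements. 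This is where $\ell_0$ comes from, and it is what makes the bound uniform in $N$. If you want to salvage the direct approach, you would need a quadratic Weyl sum bound along progressions of step $N$ that is uniform in $N$, and no such bound exists from \eqref{eq:conddiop} alone.
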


We now obtain that for $L \geq N^{15}$ and $N$ large enough, we have
for some $0 \leq \ell_0 \leq N-1$
\be
 \#\{\lfloor\frac{1}{3N } L+\ell_0\rfloor \leq \ell \leq \frac{2}{3N} L:
   \quad T^{\ell N}(x,y)\text{ in fixed
  bad box}\} \leq \frac{10 L/N}{N^{3 (1 + \sigma)}}.
\ee
Since
\be
 \#\{\text{bad boxes}\} \leq \delta_N N^{3(1+\sigma)}
\ee
with $\delta_N \to 0$ as $N \to \infty$, we obtain for $L \geq N^{15}$ that
\be
 \#\{\lfloor\frac{1}{3N } L+\ell_0\rfloor \leq \ell \leq \frac{2}{3N} L:
  \quad T^{\ell N}(x,y)\text{ in some bad box}\} \leq \delta_N \frac{L}{N}
\ee
for $\delta_N\to 0$ as $N \to \infty$.

\begin{proof}[Proof of Theorem~\ref{thm:greendecays}]
 We just give the argument for $k_+$.
 Choose $\delta_N \leq \frac{1}{10 C}$.
 Now divide $\left[\lfloor\frac{1}{3N } L+\ell_0\rfloor,
 \frac{2}{3N} L\right]$ into segments of length $3 C$.
 Then at most $\delta_N \frac{L}{N}$ of them can
 contain an iterate that lands in a bad box,
 but there are $\frac{1}{C} \frac{L}{3 N} = 10 \delta_N
 \frac{L}{3N}$ many of them. Hence, we must have at
 least one, where our conclusion holds.
\end{proof}

\appendix

%
%
%

\section{Dynamics of the skew-shift}
\label{sec:dynskew}

In this section, we will discuss quantitative recurrence
results for the skew-shift. The discussing here follows
the one in Chapters~10 and 11 in \cite{kthesis}.

\begin{theorem}\label{thm:skewergodic}
 Assume \eqref{eq:conddiop2} and let
 $\sigma > 0$. Then for a constant $C = C(c,\tau,\sigma ) >0$
 we have for $L\geq 1$
 \be
  \#\{1\leq\ell\leq L:\quad
  T^{\ell}(x,y) \in B_{\eps,\delta}\} \leq 5 \eps \delta L
   + C \left(\frac{1}{\eps}\right)^{1 + \sigma}
   L^{\frac{1}{2} + \sigma}.
 \ee
\end{theorem}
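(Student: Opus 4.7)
The plan is to bound the counting function by Fourier-expanding an approximation to the indicator of $B_{\eps,\delta}$ and estimating the resulting exponential sums via Weyl's inequality, exploiting the fact that $T^\ell(x,y) = (x+2\ell\omega,\, y+\ell x+\omega\ell(\ell-1))$ has second coordinate quadratic in $\ell$ with leading coefficient $\omega$, to which the Diophantine hypothesis \eqref{eq:conddiop2} applies.

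Concretely, I would approximate $\chi_{[-\eps,\eps]}$ on $\T$ from above by a Vaaler (Beurling--Selberg) trigonometric polynomial $M_\eps$ of degree $K_1$ with $\hat M_\eps(0) = 2\eps + O(1/K_1)$ and $|\hat M_\eps(k)| \leq \min(2\eps+1/K_1,\, 1/(\pi|k|))$, and similarly $M_\delta$ of degree $K_2$. Then
\begin{align*}
\#\{1\leq\ell\leq L : T^\ell(x,y)\in B_{\eps,\delta}\}
&\leq \sum_{\ell=1}^{L} M_\eps(x+2\ell\omega)\, M_\delta(y+\ell x+\omega\ell(\ell-1))\\
&= \sum_{\substack{|k|\leq K_1 \\ |m|\leq K_2}} \hat M_\eps(k)\, \hat M_\delta(m)\, e(kx+my)\, S_L(k,m),
\end{align*}
where $S_L(k,m) = \sum_{\ell=1}^{L} e\bigl(m\omega\ell^{2}+(2k\omega+m(x-\omega))\ell\bigr)$. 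The $(k,m)=(0,0)$ term contributes $(2\eps+O(1/K_1))(2\delta+O(1/K_2))L$, which is bounded by $5\eps\delta L$ once $K_1, K_2$ are chosen large enough that the cross terms are absorbed.

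The remaining terms split by whether $m=0$ or not. When $m=0$ and $k\neq 0$, $S_L(k,0)$ is a geometric sum bounded by $\min(L,\|2k\omega\|^{-1})\lesssim k^{\tau}$ via \eqref{eq:conddiop2}, and after summation against $|\hat M_\eps(k)|\leq 1/|k|$ this contribution is easily dominated by the quadratic one. When $m\neq 0$, $S_L(k,m)$ is a quadratic Weyl sum whose leading coefficient $m\omega$ inherits a Diophantine property from $\omega$, namely $\|q\cdot m\omega\|\geq c/(q|m|)^{\tau}$ for all $q\geq 1$. Applying Weyl's inequality for quadratic exponential sums then yields, for any $\sigma'>0$,
\[
 |S_L(k,m)|\leq C_{\sigma'}\,|m|^{A(\tau,\sigma')}\,L^{1/2+\sigma'},
\]
with $A(\tau,\sigma')$ explicit. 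Summing $|\hat M_\eps(k)|\,|\hat M_\delta(m)|\,|m|^{A}$ over $|k|\leq K_1$ and $|m|\leq K_2$, taking the cutoffs of order $1/\eps$, and absorbing the logarithmic losses into $\sigma$, one arrives at a total quadratic error bounded by $C(1/\eps)^{1+\sigma}L^{1/2+\sigma}$.

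The main technical obstacle is deploying Weyl's inequality uniformly in $m$: the standard statement requires selecting a rational approximation $p/q$ of $m\omega$ at a specified scale, and the quality of this approximation — hence the effective constant in the bound — degrades polynomially in $|m|$ as soon as $|m|$ is comparable to $q$. One must verify that this loss, when combined with the range $|m|\leq K_2\sim 1/\eps$, produces the exponent $1+\sigma$ and no larger, which amounts to a careful choice of $\sigma'$ and the Dirichlet scale in the Weyl estimate. Once this is done, the linear-in-$k$ sum, the tail of the Vaaler approximation, and the final collection of estimates are routine.
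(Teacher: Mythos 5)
Your proposal follows essentially the same route as the paper's: majorize the indicator of $B_{\eps,\delta}$ by a Selberg/Vaaler-type trigonometric polynomial with frequency cutoffs of order $1/\eps$ and $1/\delta$, expand the count into linear ($m=0$) and quadratic ($m\neq 0$) exponential sums, and control these by the geometric-sum bound and Weyl's inequality respectively, with the Diophantine hypothesis on $\omega$ supplying the lower bound on $\|m\omega\|$. Your extra care in tracking the Dirichlet denominator of $m\omega$ uniformly in $m$ is exactly where the real work lies: the paper compresses this into a clean auxiliary bound $\left|\sum_{\ell=1}^L e(t\ell + \omega\ell^2)\right| \leq C L^{1/2+\sigma}/\|\omega\|$ stated for arbitrary $\omega$, which in that generality is actually false (take $\omega = 1/2$ and $t = -1/2$, giving the sum $= L$ while $\|\omega\|=1/2$), so the Diophantine assumption is being used implicitly inside that lemma in precisely the way you spell out explicitly.
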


\begin{proof}[Proof of Theorem~\ref{thm:skewergodic2}]
 Let $\sigma = \frac{1}{4}$ in the previous theorem.
 Then we have that
 \[
  \#\{1\leq\ell\leq L:\quad
  T^{\ell}(x,y) \in B_{\eps,\delta}\} \leq 10 \eps \delta L
 \]
 if $ C \left(\frac{1}{\eps}\right)^{\frac{5}{4}}
   L^{\frac{3}{4}} \leq 5 \eps \delta L$
 or equivalently
 \[
  L^{\frac{1}{4}} \geq \frac{C}{5 \delta} \cdot \frac{1}{\eps^{\frac{9}{4}}}.
 \]
 Next divide $1 \leq \ell \leq L$ into $N$ arithmetic
 progressions of the form $\{\ell_0 + \ell N\}_{\ell=0}^{L/N}$
 for $\ell_0 \in \{1,\dots, L\}$. Then at least
 one of them must contain less than $\frac{10 \eps \delta L}{N}$
 elements.
\end{proof}

We now begin to prove Theorem~\ref{thm:skewergodic}. 

\begin{lemma}
 There exists a trigonometric polynomial $P$ given
 by
 \be
  P(x,y) = \sum_{|j| \leq \frac{2}{\eps}}
   \sum_{|k| \leq \frac{2}{\delta}} P_{j,k} e(jx+k y)
 \ee
 such that $|P_{j,k}|\leq 5\eps\delta$ and
 \be
  \chi_{B_{\eps,\delta}} \leq P,
 \ee
 where $\chi_{A}$ denotes the characteristic function
 of $A\subseteq\T^2$.
\end{lemma}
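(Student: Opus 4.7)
The plan is to obtain $P$ as a tensor product of one-dimensional Selberg-type majorants, one for each coordinate. First I would establish the one-dimensional analogue: for every $\eps>0$ there exists a nonnegative trigonometric polynomial $M_\eps$ on $\T$ of degree at most $2/\eps$ with $M_\eps(x)\ge \chi_{[-\eps,\eps]}(x)$ pointwise and with $\int_\T M_\eps(x)\,dx \le c\,\eps$ for an absolute constant $c$. Granting this and the analogous $M_\delta$ in the second variable, I would set
\[
 P(x,y) = M_\eps(x)\,M_\delta(y).
\]
Both factors are nonnegative and each pointwise dominates the indicator in its own coordinate, so $P(x,y)\ge\chi_{[-\eps,\eps]}(x)\,\chi_{[-\delta,\delta]}(y)=\chi_{B_{\eps,\delta}}(x,y)$, which gives the required majorization, and the Fourier support of $P$ lies in $|j|\le 2/\eps$, $|k|\le 2/\delta$.

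Next, the Fourier expansion of a product factorizes, so the coefficients of $P$ are $P_{j,k}=\widehat{M_\eps}(j)\,\widehat{M_\delta}(k)$. Since $M_\eps\geq 0$, one has $|\widehat{M_\eps}(j)|\le\int M_\eps\le c\eps$ for every $j$, and similarly $|\widehat{M_\delta}(k)|\le c\delta$. Thus $|P_{j,k}|\le c^2\eps\delta$, which yields the claim provided the 1D construction is arranged with $c^2\le 5$.

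The hard part is carrying out the 1D majorant construction with a sharp enough absolute constant. I would use the Beurling--Selberg extremal function on the torus, which produces a nonnegative trigonometric polynomial of degree $\le N$ majorizing $\chi_I$ on an arc $I$ with $\int M\le |I|+1/(N+1)$. Taking $|I|=2\eps$ and $N=\lfloor 2/\eps\rfloor$ yields $c\le 5/2$, which gives the overall estimate $|P_{j,k}|\le 25\eps\delta/4$, slightly looser than the stated $5\eps\delta$. To match the stated constant exactly, I would either refine the 1D construction (for instance, a Fej\'er-kernel smoothing of a mildly enlarged indicator with the enlargement and the degree jointly optimized) or, more pragmatically, verify that the minor worsening of the absolute constant is harmless in the subsequent application of the lemma. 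The core mechanism --- tensor product of explicit 1D polynomial majorants whose Fourier coefficients are uniformly controlled by the $L^1$-norm via nonnegativity --- is in any case the same.
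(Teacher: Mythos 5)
Your approach is the same as the paper's, which simply cites the Selberg polynomial construction from Montgomery's book (Chapter 2 of \cite{mont}) and leaves the tensor-product step and the Fourier-coefficient bound implicit. The slight discrepancy in the absolute constant ($25/4$ vs.\ $5$) that you flag is indeed harmless, since in the application (Theorem~\ref{thm:skewergodic} and its corollary) only the order $O(\eps\delta)$ matters and every downstream constant can be adjusted accordingly.
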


\begin{proof}
 Follows by using Selberg polynomials,
 see Chapter~2 in \cite{mont}.
\end{proof}

We compute that
\begin{align*}
  \#\{1\leq\ell\leq L:\quad
  &T^{\ell}(x,y) \in B_{\eps,\delta}\} \leq \sum_{\ell=1}^{L}
  P(T^{\ell}(x,y)) \\
 & \leq 5 \eps \delta L + 5 \eps \delta
   \sum_{|j| \leq \frac{2}{\eps}}
   \sum_{|k| \leq \frac{2}{\delta}}
   \left|\sum_{\ell=1}^{L} e(j \cdot 2 \ell \omega + k x \ell
    - k \omega \ell
    + k \omega \ell^2 )\right|.
\end{align*}
To finish the proof of Theorem~\ref{thm:skewergodic}
we will need the next two bounds.

\begin{lemma}
 We have
 \be
  \left|\sum_{\ell=1}^{L} e(\ell \cdot \omega)\right|
   \leq \frac{1}{2 \|\omega\|}.
 \ee
 and for $\sigma > 0$, there exists $C = C(\sigma) > 0$
 such that for any $t\in\R$
 \be
  \left|\sum_{\ell=1}^{L} e(t \ell + \omega \ell^2)\right|
   \leq \frac{C L^{\frac{1}{2} + \sigma}}{\|\omega\|}.
 \ee
\end{lemma}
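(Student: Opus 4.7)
The plan is to handle the two bounds separately. The first estimate is an elementary geometric-series computation: summing in closed form,
\[
\sum_{\ell=1}^L e(\ell\omega) = e(\omega)\,\frac{1 - e(L\omega)}{1 - e(\omega)},
\]
so the modulus is at most $2/|1 - e(\omega)| = 1/|\sin(\pi\omega)|$. The elementary inequality $|\sin(\pi x)| \geq 2\|x\|$ for real $x$ (which follows from concavity of $\sin$ on $[0,\pi/2]$) then delivers the claim.

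For the second bound, my plan is Weyl's classical differencing trick. Expanding
\[
|S|^2 = \sum_{\ell,m=1}^L e(t(\ell - m) + \omega(\ell^2 - m^2))
\]
and substituting $h = \ell - m$, the identity $\ell^2 - m^2 = h(2m + h)$ converts this into
\[
|S|^2 = L + 2\,\mathrm{Re}\sum_{h=1}^{L-1} e(th + \omega h^2)\sum_{m \in I_h} e(2\omega h\, m),
\]
where $I_h$ is a subinterval of $\{1,\dots,L\}$. Since the inner sum is linear in $m$, the first part of the lemma bounds its modulus by $\frac{1}{2}\|2\omega h\|^{-1}$, yielding
\[
|S|^2 \leq L + \sum_{h=1}^{L-1} \frac{1}{\|2\omega h\|}.
\]

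It then remains to show that $\sum_{h=1}^{L-1}\|2\omega h\|^{-1} \leq C^2 L^{1+2\sigma}/\|\omega\|^2$. My plan here is to partition $\{1,\dots,L-1\}$ according to the dyadic scale of $\|2\omega h\|$: a counting argument (either via the three-distance theorem or via best rational approximants of $2\omega$) controls $\#\{1\leq h\leq L: \|2\omega h\| < \varepsilon\}$, and summing $\varepsilon^{-1}$ against this counting function over dyadic scales $\varepsilon = 2^{-j}$, down to a cutoff at scale $\|\omega\|$, produces the desired bound. Any arising logarithmic loss is absorbed into the factor $L^\sigma$.

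The main obstacle is this final counting step. The claimed bound is tailored to produce exactly the $\|\omega\|^{-1}$ dependence, and reproducing this requires using the arithmetic structure of the sequence $\{2\omega h\}$ rather than a generic equidistribution or discrepancy estimate; I expect this to be the technical heart of the argument.
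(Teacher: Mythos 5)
Your proof of the first estimate is correct: the closed-form geometric sum gives modulus at most $2/|1-e(\omega)| = 1/|\sin\pi\omega|$, and the concavity bound $|\sin\pi x|\geq 2\|x\|$ finishes it. The Weyl differencing for the second estimate is also set up correctly, and after applying the first part to the inner linear sum you arrive at $|S|^2 \leq L + \sum_{h=1}^{L-1}\min\bigl(L,\|2\omega h\|^{-1}\bigr)$ (you should retain the $\min$, since $\|2\omega h\|$ can be arbitrarily small or even zero).

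However, the last step that you flag as ``the technical heart'' is not a technicality you have left for later --- it is a gap that cannot be closed, because the target inequality $\sum_{h<L}\min(L,\|2\omega h\|^{-1}) \lesssim L^{1+2\sigma}/\|\omega\|^2$ is false in general. The point is that $\|\omega\|$ controls only the first return of $2\omega h$ near zero; it says nothing about later returns. Take $\omega=1/3$, so $\|\omega\|=1/3$: then $\|2\omega h\|=0$ for every $h\equiv 0\pmod 3$ and the sum is at least $L\lfloor (L-1)/3\rfloor\sim L^2/3$, which dwarfs $9L^{1+2\sigma}$ once $\sigma<1/2$. Correspondingly the lemma itself fails at $\omega=1/3$, $t=0$: since $\ell^2\equiv 0$ or $1\pmod 3$,
\[
 \sum_{\ell=1}^{L} e(\ell^2/3) = \tfrac{L}{3}\bigl(1+2e(1/3)\bigr)+O(1)=\tfrac{L}{\sqrt 3}\,\I+O(1),
\]
which is of order $L$, while the claimed bound is $3CL^{1/2+\sigma}$. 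The same failure occurs for any $\omega$ close enough to $1/3$ (or to $1/4$, etc.), so irrationality of $\omega$ does not save the statement. No counting argument (three-distance theorem, continued fractions, discrepancy) can produce a true proof of a false inequality, so the approach is stuck not for lack of cleverness but because the target is wrong.

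For context, the paper itself gives no proof of this lemma, only a citation to Montgomery's Chapter~3. The results there (Weyl's inequality, van der Corput's lemma) give bounds of the shape $L\|\omega\|^{1/2}+\|\omega\|^{-1/2}$, or $L^{1+\eps}(q^{-1}+L^{-1}+qL^{-2})^{1/2}$ with $q$ the denominator of a good rational approximation to $2\omega$; these imply the lemma's claimed estimate only when $\|\omega\|$ is small compared to a negative power of $L$. As written, the lemma requires either an additional hypothesis (e.g., that $\omega$ is close to an integer relative to $L$) or a different form of the right-hand side, and your proof attempt should be understood as running into that defect rather than into a technical obstacle of your own making.
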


\begin{proof}
 See Chapter~3 in \cite{mont}.
\end{proof}

\begin{proof}[Proof of Theorem~\ref{thm:skewergodic}]
 From the previous lemma and the computation preceeding
 it, we obtain
 \[
 \#\{1\leq\ell\leq L:\quad
  T^{\ell}(x,y) \in B_{\eps,\delta}\} \leq 5 \eps \delta L
  + C L^{\frac{1}{2} + \sigma} 
   \sup_{1 \leq k \leq \frac{2}{\eps}} \frac{1}{\|k \omega\|}.
 \]
 The claim now follows by \eqref{eq:conddiop}.
\end{proof}

%
%
%

\section*{Acknowledgements}

The key realization that Proposition~\ref{prop:rotatealphas}
and Lemma~\ref{lem:rotatealpha} hold, came to me 
during discussions with Darren Ong.
Furthermore, I am thankful to Maxim Zinchenko for
correspondence, which clarified issues related
to \eqref{eq:varphibulletgamma}.

%
%
%


\begin{thebibliography}{xxx}

 \bibitem{abd1} A. Avila, J. Bochi, D. Damanik, {\em Cantor spectrum for Schr\"odinger
   operators with potentials arising from generalized skew-shifts}.
   Duke Math. J. 146 (2009), 253--280.

 \bibitem{abd2} A. Avila, J. Bochi, D. Damanik, {\em Opening Gaps in the Spectrum
   of Strictly Ergodic Schr\"odinger Operators}.
   JEMS, (to appear).


 \bibitem{aj1} A. Avila, S. Jitomirskaya, {\em  Solving the Ten Martini Problem}.
    Lecture Notes in Physics 690 (2006), 5-16.

 \bibitem{aj2} A. Avila, S. Jitomirskaya, {\em The Ten Martini Problem}.
    Ann. of Math. 170 (2009), 303-342.


 \bibitem{bbook} J. Bourgain, \textit{Green's function estimates for lattice Schr\"odinger operators and applications},
   Annals of Mathematics Studies, \textbf{158}. Princeton University Press, Princeton, NJ, 2005. x+173 pp.

 \bibitem{b} J. Bourgain, \textit{Positive Lyapounov exponents for most energies},
   Geometric aspects of functional analysis,  37--66, Lecture Notes in Math. \textbf{1745},
   Springer, Berlin, 2000.

 \bibitem{b2} J. Bourgain, \textit{On the spectrum of lattice Schr\"odinger operators with deterministic potential},
   J. Anal. Math.  \textbf{87}  (2002), 37--75.

 \bibitem{b3} J. Bourgain, \textit{On the spectrum of lattice Schr\"odinger operators with deterministic potential (II)},
   J. Anal. Math.  \textbf{88}  (2002), 221--254.

 \bibitem{b2002} J. Bourgain, \textit{Estimates on Green's functions, localization and the quantum kicked rotor model},
   Ann. of Math. (2) \textbf{156-1}  (2002), 249--294.

 \bibitem{bgs} J. Bourgain, M. Goldstein, W. Schlag, \textit{Anderson localization for
   Schr\"odinger operators on $\mathbb Z$
   with potentials given by the skew-shift},  Comm. Math. Phys. \textbf{220-3}  (2001), 583--621.


 \bibitem{brinstuck} M. Brin, G. Stuck, 
  {\em Introduction to dynamical systems}.
  Cambridge University Press, Cambridge, 2002. xii+240 pp.

 \bibitem{cs} V. Chulaevsky, Y. Sinai, {\em Anderson Localization for the I-D Discrete Schr\"odinger
   Operator with Two-Frequency Potential}.
   Comm. Math, Phys. {\bf 125} (1989), 91--112.

 \bibitem{dk} D. Damanik, H. Kr\"uger,
  {\em Almost Periodic Szeg\H{o} Cocycles with Uniformly Positive Lyapunov Exponents}.
  J. Approx. Theory {\bf 161:2}, 813-818 (2009).

 \bibitem{furman} A. Furman, 
  {\em On the multiplicative ergodic theorem for uniquely ergodic systems}.
  Ann. Inst. H. Poincaré Probab. Statist. {\bf 33:6} (1997), 797--815.


 \bibitem{gzweyl} F. Gesztesy, M. Zinchenko,
  {\em Weyl-Titchmarsh theory for CMV operators associated with orthogonal polynomials on the unit circle}.
  J. Approx. Th. {\bf 139} (2006), 172--213.

 \bibitem{gzborg} F. Gesztesy, M. Zinchenko,
  {\em A Borg-type theorem associated with orthogonal
   polynomials on the unit circle}.
   J. Lond. Math. Soc. (2) {\bf 74} (2006), 757--777.

 \bibitem{gs4} M. Goldstein, W. Schlag, {\em On resonances and the formation of gaps in the
   spectrum of quasi-periodic Schr\"odinger equations}.
   Ann. of Math. (to appear).

 \bibitem{gsfest} M. Goldstein, W. Schlag, {\em On the formation of gaps in the spectrum of
   Schr\"odinger operators with quasi-periodic potentials}.
   Spectral theory and mathematical physics: a Festschrift in honor of Barry Simon's 60th birthday,
   539--563, Proc. Sympos. Pure Math., \textbf{76}, Part 2, Amer. Math. Soc., Providence, RI, 2007.


 \bibitem{js} S. Jitomirskaya, B. Simon,
  {\em Operators with singular continuous spectrum: III. Almost periodic Schr\"odinger operators}.
  Commun. Math. Phys. {\bf 165} (1994), 201--205.

 \bibitem{kisto09} R. Killip, M. Stoiciu,
  {\em Eigenvalue Statistics for CMV Matrices: From Poisson to Clock via Random Matrix Ensembles}.
   Duke {\bf 146:3} (2009), 361--399. 

 \bibitem{krho1} H. Kr\"uger,
  {\em A family of Schr\"odinger Operators whose spectrum is an interval}.
   Comm. Math. Phys. {\bf 290:3} (2009), 935--939.

 \bibitem{krho2} H. Kr\"uger,
  {\em Probabilistic averages of Jacobi operators}.
   Comm. Math. Phys. {\bf 295:3} (2010), 853--875.

 \bibitem{kthesis} H. Kr\"uger,
  {\em Positive Lyapunov Exponent for Ergodic Schr\"odinger Operators}. 
  PhD Thesis, Rice University, April 2010.

 \bibitem{kskew} H. Kr\"uger,
  {\em On the spectrum of skew-shift Schr\"odinger operators},
  J. Funct. Anal. (to appear).

 \bibitem{lsess} Y. Last, B. Simon,
  {\em The essential spectrum of Schr\"odinger, Jacobi, and CMV operators}.
  J. d'Analyse Math. {\bf 98} (2006), 183-220.

 \bibitem{mont} H. L. Montgomery, {\em Ten lectures on the interface between analytic number theory and harmonic analysis},
    CBMS Regional Conference Series in Mathematics, 84. Published for the Conference Board of the Mathematical Sciences, Washington, DC; by the American Mathematical Society, Providence, RI, 1994. xiv+220 pp.

 \bibitem{raven} T. van Ravenstein,
  {\em The three gap theorem (Steinhaus conjecture)}.
  J. Austral. Math. Soc. Ser. A {\bf 45:3} (1988), 360--370. 

 \bibitem{si} B. Simon, \textit{Regularity and the Ces\'aro-Nevai class}, J. Approx. Theory \textbf{156} (2009), 142-153.


\bibitem{opuc1} B.\ Simon, \textit{Orthogonal Polynomials on the Unit Circle. Part~1. Classical
Theory}, Colloquium Publications, 54, American Mathematical
Society, Providence (2005)

\bibitem{opuc2} B.\ Simon, \textit{Orthogonal Polynomials on the Unit Circle. Part~2. Spectral Theory},
Colloquium Publications, 54, American Mathematical Society,
Providence (2005)

 \bibitem{scmv5} B.\ Simon, 
  {\em CMV matrices: Five years after}.
  J. Comput. Appl. Math. {\bf 208} (2007), 120--154.

 \bibitem{s1foot} B.\ Simon,
  {\em OPUC on one foot}.
  Bull. Amer. Math. Soc. {\bf 42} (2005), 431--460.

 \bibitem{szego} B.\ Simon, \textit{Szeg\H{o}'s Theorem
  and its Descendants}.

\bibitem{st06} M. Stoiciu, 
  {\em The statistical distribution of the zeros of random paraorthogonal polynomials on the unit circle}.
   J. Approx. Theory  {\bf 39} (2006), 29--64.

\end{thebibliography}
\end{document}